\def\charac{\mathop{\rm char}\nolimits}
\def\Mult{\mathop{\rm Mult}\nolimits}
\newcommand{\K}{{\mathbb{K}}}
\theoremstyle{plain}
\newtheorem{theorem}{Theorem}[section]
\newtheorem{lemma}[theorem]{Lemma}
\newtheorem{corollary}[theorem]{Corollary}
\newtheorem{proposition}[theorem]{Proposition}
\theoremstyle{definition}
\numberwithin{equation}{section}
\begin{document}
	
	\title{Some line and conic arrangements and their Waldschmidt constants}
	
	\author{Dinh Tuan Huynh}
	\address{Department of Mathematics, University of Education - Hue University,
		34 Le Loi, Hue, Vietnam}
	\email{dinhtuanhuynh@hueuni.edu.vn}
	
	\author{Tran N.~K.~Linh}
	\address{Department of Mathematics, University of Education - Hue University,
		34 Le Loi, Hue, Vietnam}
	\email{tnkhanhlinh@hueuni.edu.vn}
	
	\author{Le Ngoc Long}
	\address{Department of Mathematics, University of Education - Hue University,
		34 Le Loi, Hue, Vietnam}
	\email{lelong@hueuni.edu.vn}

	\keywords{symbolic powers, point configurations, Waldschmidt constants}
	\subjclass[2010]{Primary 14N20; Secondary 14C20, 14N05, 13F20}
	
	\date{\today}
	
	\dedicatory{J\"urgen Herzog (1941-2024), in memoriam.}
	
	\begin{abstract}
	We study the Waldschmidt constant of some configurations in the projective plane. In the first part, we show that the Waldschmidt constant of a set $\mathbb{X}$ of $n$ points where at least  $n-3$ points among them lie on a line  is either equal to $1, \frac{2n-3}{n-1}, 2, \frac{16}{7}, \frac{7}{3}, \frac{17}{7},$ or $\frac{5}{2}$. Together with the Hilbert polynomials, this gives a complete geometric characterization for $\mathbb{X}$. Next, we study 
	some specific configurations whose Waldschmidt constants are bounded from above by $\frac{5}{2}$. Under this condition, we describe all configurations of $n$ points with $n-1$ points among them lying on an irreducible conic, and we also study some specific configurations of $9$ points.
	\end{abstract}
	
	\maketitle

%
%
\section{Introduction}
\label{introduction}
	
	In this paper, we work on an algebraically closed field $\K$ 
	of characteristic zero. Let $\mathbb{X}=\{P_1,\dots,P_n\}$ be 
	a finite set of $n$ points in the projective plane $\mathbb{P}^2$ 
	over $\K$. Let $I_{\mathbb{X}}$ be the saturated homogeneous ideal in 
	the homogeneous coordinate ring of $\mathbb{P}^2$ defining $\mathbb{X}$, then
	\[
	I_{\mathbb{X}} \; =\; I(P_1)\cap\cdots\cap I(P_n),
	\]
	where $I(P_i)$ denotes the associated homogeneous prime ideal 
	of the point $P_i$. For a positive integer $m$, the $m$-th {\it symbolic power} of $I_{\mathbb{X}}$ is defined as
	\[
	I_{\mathbb{X}}^{(m)} \; :=\;  I(P_1)^{(m)}\cap \cdots\cap I(P_n)^{(m)},
	\]
	whose elements are homogeneous polynomials vanishing to order 
	at least $m$ on $\mathbb{X}$. The study of symbolic ideals 
	of the set of points have been active in recent decades, 
	partly because the link with various problems in algebraic 
	geometry and commutative algebra, for instance the long 
	standing Nagata conjecture.
	
	It is well-known that the initial degree of the ideal 
	$I_{\mathbb{X}}^{(m)}$ can provide important information 
	about its structure. Here, recalling that for an arbitrary 
	homogeneous ideal $I=\oplus_{i\geq 0}I_i$, 
	the {\it initial degree} of $I$ is defined to be the number
	\[
	\alpha(I) \; :=\; \min \{\, i\in\mathbb{Z} \mid I_i\not=0 \,\}.
	\]
	
	Now let $m\mathbb{X}$ denote the subscheme of $\mathbb{P}^2$ 
	defined by the ideal $I_{\mathbb{X}}^{(m)}$. 
	We often write $\alpha_{m\mathbb{X}}$ instead of 
	$\alpha (I_{\mathbb{X}}^{(m)})$, since we are mainly interested 
	in the geometry of the set $\mathbb{X}$. 
	Interests for studying this constant come from various domains 
	of researches. For instance, when $\K=\mathbb{C}$, this number 
	appeared in a generalized version of the Schwarz Lemma in 
	several complex variables~\cite{Moreau1980}. 
	To obtain an estimate independent of $m$, one considers the 
	following related asymptotic number
	\[
	\hat{\alpha}_{\mathbb{X}} \; :=\; \lim_{m\rightarrow\infty}
	\dfrac{\alpha_{m\mathbb{X}}}{m},
	\]
	called the {\it Waldschmidt constant} for the set 
	$\mathbb{X}$~\cite{Waldschmidt1977}. 
	The existence of this limit was proved by Chudnovsky~\cite{Chudnovsky1979}. Furthermore, one actually has that \cite{Bocci-Harbourne2010}
	\[
	\hat{\alpha}_{\mathbb{X}} \;=\; \inf_{m\geq 1}
	\dfrac{\alpha_{m\mathbb{X}}}{m}.
	\]
	
	Many works have been done to obtain effective lower bounds 
	for $\alpha_{m\mathbb{X}}$. Researches in this direction 
	were motivated by the conjectures of Chudnovsky \cite{Chudnovsky1979} 
	and Demailly \cite{Demailly1982}, see \cite{Malara2018} 
	and references therein for more information. 
	
	On the other hand, although $\hat{\alpha}_{\mathbb{X}}$ is an 
	important asymptotic invariant for $\mathbb{X}$, computing 
	explicitly this constant is not an easy task in general. 
	Until now, this question has been settled only in some special 
	configurations. Notably, in \cite{Dumnicki-Szemberg-Tutaj2016}, 
	the authors characterized all configurations of points
	$\mathbb{X}\subset\mathbb{P}^2$ such that
	$\hat{\alpha}_{\mathbb{X}}<\frac{9}{4}$. 
	A detail description of the configuration $\mathbb{X}$ with
	$\hat{\alpha}_{\mathbb{X}}=2$ was given in~\cite{Mosakhani-Haghighi2016}. In \cite{Catalisano-Favacchio-Guardo-Shin2024}, \cite{Virginia-Elena-Su2020}, the Waldschmidt constants of some specific $\Bbbk$-configurations were computed. The study of the Waldschmidt constant of zero dimensional schemes and related topics is a very active research direction, see \cite{Bocci2016, Enrico2020, Michael2019, Dumnicki2015, Dumnicki2014}, just to cite a few,  for some recent progresses.
	
	In the current paper, we first consider the configuration
	$\mathbb{X}=\{P_1,\dots,P_n\}$ of $n\ge 7$ distinct points 
	in $\mathbb{P}^2$, in which the first $n-3$ points lie on
	a line $L$ and the remaining three points are out of $L$. 
	We compute explicitly the Waldschmidt constant for all 
	circumstances of the position of points in $\mathbb{X}$. 
	Together with the Hilbert polynomials, we provide a complete 
	geometric characterization for the being considered configuration. Next, we describe all configurations $\mathbb{X}$ of $n$ points with $n-1$ points among them lying on an irreducible conic such that  $\hat{\alpha}_{\mathbb{X}}\leq\frac{5}{2}$. Finally, in the last part, we consider some specific configurations of $9$ points whose Waldschmidt constant is exactly $\frac{5}{2}$. Due to the combinatorial complexity, we restrict the discussion to the case where the points lie on the union of an irreducible conic and a line.

	Our main tool in computing the Waldschmidt constant is the following notions introduced 
	in~\cite{Dumnicki-Szemberg-Tutaj2016}, related to the classical 
	B\'{e}zout Theorem, which shall play crucial role in estimating 
	the  intersection multiplicities. Let $D$ be an effective divisor 
	of degree $d$ on $\mathbb{P}^2$ vanishing to order $m_i$ 
	at the point $P_i$ ($1\leq i\leq n)$, and let
	$\mathcal{C}_1,\dots,\mathcal{C}_r$ be irreducible algebraic curves 
	of degree $d_1,\dots,d_r$, respectively, such that $\Mult_{P_i}\mathcal{C}_j=m_{ij}$ for $1\leq i\leq n,\,1\leq j\leq r$. Throughout this paper, with some abuse of notation, we use the same symbol for an algebraic curve and its associated divisor. By~\cite[Subsection 2.1]{Dumnicki-Szemberg-Tutaj2016}, we have
	\begin{proposition}
	\label{bezout decomposition}
	There exist uniquely integers $a_1,\dots, a_r\geq 0$ and the divisor $B(D)$ such that
	\[
	D \;=\; \sum_{j=1}^{r}a_j\mathcal{C}_j+B(D)
	\]
	and
	\begin{equation}\label{CountingMultiplicities}
		B(D)\cdot \mathcal{C}_j \;\geq \;
		\sum_{i=1}^{n}
		\bigg(
		m_i-\sum_{\ell=1}^{r}a_{\ell}m_{i\ell}
		\bigg)
		m_{ij},
		\qquad
		(1\,\leq\,j\,\leq\,r).
	\end{equation}

The above decomposition of $D$ is called its {\it B\'{e}zout decomposition} with respect to $\mathcal{C}_1,\dots, \mathcal{C}_r$,
and the term $B(D)$ is called the {\it B\'{e}zout reduction} of $D$.		
	\end{proposition}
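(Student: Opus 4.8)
The plan is to produce the decomposition by a greedy subtraction procedure, reading off both the inequality \eqref{CountingMultiplicities} and the uniqueness from the interplay between B\'{e}zout's theorem and the local estimate $i(P;C,C')\ge \Mult_P C\cdot\Mult_P C'$ for the intersection multiplicity of two curves at a point. The first thing I would isolate is the following local-to-global bound: if $E$ is an effective divisor and $\mathcal{C}_j$ is one of the irreducible curves that is \emph{not} a component of $E$, then $E$ and $\mathcal{C}_j$ meet properly, so B\'{e}zout expresses $E\cdot\mathcal{C}_j$ as the sum of the local numbers $i(P;E,\mathcal{C}_j)$ over all intersection points. Keeping only the points of $\mathbb{X}$ and bounding each retained local number below by the product of multiplicities yields
\[
E\cdot\mathcal{C}_j \;\ge\; \sum_{i=1}^{n}\Mult_{P_i}(E)\,m_{ij}.
\]
Thus the desired inequality holds automatically once $\mathcal{C}_j$ is not a component of the remainder, and its contrapositive is what drives the construction: if some $j$ violates the inequality for $E$, then $\mathcal{C}_j$ must occur as a component of $E$, so $E-\mathcal{C}_j$ is again effective.

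For existence I would run the obvious algorithm. Start with $E:=D$ and all $a_j:=0$; as long as some index $j$ violates \eqref{CountingMultiplicities} for the current $E$, replace $E$ by $E-\mathcal{C}_j$ (legitimate by the contrapositive above) and increase $a_j$ by one. Here I use that $\Mult_{P_i}(E)=m_i-\sum_\ell a_\ell m_{i\ell}$ by additivity of multiplicities along the decomposition, so the quantity being tested is exactly the right-hand side of \eqref{CountingMultiplicities}. Each step lowers $\deg E$ by $d_j>0$, so the procedure halts after finitely many steps; when it halts every inequality holds, and setting $B(D):=E$ gives the required decomposition with $a_j\ge 0$.

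The main obstacle is \emph{uniqueness}, that is, showing the outcome is independent of the order in which curves are stripped off. For this I would prove that the reduction is locally confluent. Suppose $E$ violates the inequality for two distinct indices $j\neq k$. Applying B\'{e}zout to the two distinct irreducible curves gives $d_jd_k=\mathcal{C}_j\cdot\mathcal{C}_k\ge\sum_i m_{ij}m_{ik}$, so when one copy of $\mathcal{C}_j$ is removed the defect $E\cdot\mathcal{C}_k-\sum_i\Mult_{P_i}(E)\,m_{ik}$ changes by $-\bigl(d_jd_k-\sum_i m_{ij}m_{ik}\bigr)\le 0$. Hence the inequality for $k$ remains violated at $E-\mathcal{C}_j$; by the contrapositive $\mathcal{C}_k$ is still a component there, $E-\mathcal{C}_j-\mathcal{C}_k$ is effective, and symmetrically $\mathcal{C}_j$ may be removed from $E-\mathcal{C}_k$. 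Both orders therefore continue to the common divisor $E-\mathcal{C}_j-\mathcal{C}_k$.

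Combining termination with this local confluence, a standard confluence argument (Newman's lemma) shows the process reaches a single normal form regardless of the choices made, so $B(D)$ is well defined. Since the $\mathcal{C}_j$ are distinct irreducible curves they are independent in the free abelian group of divisors, whence the coefficients $a_j$ are recovered unambiguously from $D-B(D)$. I expect this confluence step to be the delicate point: it is precisely where the hypothesis that the $\mathcal{C}_j$ are irreducible is used essentially, through the B\'{e}zout bound $\mathcal{C}_j\cdot\mathcal{C}_k\ge\sum_i m_{ij}m_{ik}$ that keeps a violated inequality violated after a subtraction elsewhere.
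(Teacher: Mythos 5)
Your proof is correct, and it supplies an argument that the paper itself never writes out: the proposition is stated with its proof delegated to \cite[Subsection 2.1]{Dumnicki-Szemberg-Tutaj2016}, and the mechanism there is exactly your contrapositive --- if $E\cdot\mathcal{C}_j$ falls below the multiplicity count, then B\'ezout's theorem together with the local bound $i(P;E,\mathcal{C}_j)\ge \Mult_P E\cdot\Mult_P\mathcal{C}_j$ forces $\mathcal{C}_j$ to be a component of $E$, so one subtracts a copy and repeats, with termination from the strict degree drop. So for existence you follow the same route as the cited source. What you add beyond it is the order-independence analysis: the diamond computation showing that stripping one copy of $\mathcal{C}_j$ changes the defect at index $k$ by $-\bigl(d_jd_k-\sum_i m_{ij}m_{ik}\bigr)\le 0$, where $d_jd_k\ge\sum_i m_{ij}m_{ik}$ comes from B\'ezout applied to the two \emph{distinct} irreducible curves (distinctness, not just irreducibility, is what you are really using there), followed by Newman's lemma. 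This is the right way to make ``uniquely'' precise, and neither the present paper nor the citation makes that step explicit; your confluence argument is a genuine strengthening of what is on record.

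Two caveats are worth flagging. First, your equality $\Mult_{P_i}(E)=m_i-\sum_\ell a_\ell m_{i\ell}$ holds only if $D$ vanishes to order \emph{exactly} $m_i$ at $P_i$; in the symbolic-power setting one only knows $\Mult_{P_i}(D)\ge m_i$, so only ``$\ge$'' is available --- this is harmless, since a violation of \eqref{CountingMultiplicities} then still implies a violation of the local-to-global bound and the contrapositive goes through unchanged, but the equality sign should be corrected. Second, and more substantively: your argument proves uniqueness of the \emph{normal form of the reduction procedure}, not uniqueness among all decompositions satisfying \eqref{CountingMultiplicities}, and the latter --- which is what the proposition literally asserts --- is in fact false. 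Take $r=1$, let $\mathcal{C}_1=L$ be a line, let $P_1\in L$ with $m_1=m_{11}=1$, and let $D=L$; then both $(a_1,B(D))=(0,L)$ and $(a_1,B(D))=(1,0)$ satisfy \eqref{CountingMultiplicities}, since $L\cdot L=1\ge 1$ and $0\ge 0$. So the normal-form reading you implicitly adopt is the only tenable one, and your write-up should say explicitly that ``the'' B\'ezout decomposition is by definition the output of the reduction algorithm, with your confluence argument guaranteeing that this output is well defined independently of the order of subtractions. This is a defect of the statement rather than of your proof, and it is immaterial for the rest of the paper, which only ever uses existence of a decomposition with $B(D)\ge 0$ satisfying \eqref{CountingMultiplicities}.
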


%
%
\section{Waldschmidt constant for set of $n$ points with $n-3$ points on a line}
\label{sec-2:Waldschmidt}

In this section, we consider the Waldschmidt constant for a set 
$\mathbb{X} = \{P_1,\dots, P_n\}$ of $n\ge 7$ distinct points 
in $\mathbb{P}^2$ such that at least $n-3$ points among them are collinear. In the case where
there are at least $n-2$ points of $X$ lying on a line, it is well-known (see e.g. \cite{Dumnicki-Szemberg-Tutaj2016}) that:
\begin{enumerate}
	\item $\hat{\alpha}_{\mathbb{X}} =1$ if $\mathbb{X}$
	lies on a line;
	\item  $\hat{\alpha}_{\mathbb{X}} = \frac{2n-3}{n-1}$ 
	if exactly $n-1$ points of $\mathbb{X}$ lie on a line;
	\item $\hat{\alpha}_{\mathbb{X}} = 2$ 
	if exactly $n-2$ points of $\mathbb{X}$ lie on a line.
\end{enumerate}
Hence, it is enough to look at the case where exactly $n-3$ points are collinear. From now on, for two distinct points $P,Q\in\mathbb{P}^2$, we
denote by $PQ$ the line passing through them. In the following, we may assume without loss of generality that 
the first $n-3$ points $P_1, \dots, P_{n-3}$ of $\mathbb{X}$ lie on 
a line $L$, and we write $Q_1, Q_2, Q_3$ 
instead of $P_{n-2}, P_{n-1}, P_{n}$ for the remaining three points lying
out of $L$. In the simplest case where all $Q_i$ are collinear, the Waldschmidt constant of $\mathbb{X}$ was computed in \cite[Theorem A]{Mosakhani-Haghighi2016} as
follows.
\begin{lemma}
If the three points $Q_1,Q_2,Q_3$ are collinear, 
then $\hat{\alpha}_{\mathbb{X}}=2$.
\end{lemma}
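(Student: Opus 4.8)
The plan is to sandwich the constant as $2\le \hat\alpha_{\mathbb X}\le 2$. Write $L$ for the line carrying the collinear points $P_1,\dots,P_{n-3}$ and $M=Q_1Q_2$ for the line carrying $Q_1,Q_2,Q_3$, so that $\mathbb X\subset L\cup M$. For the upper bound I would simply exhibit a small element of $I_{\mathbb X}^{(m)}$: if $\ell,\ell'$ are linear forms cutting out $L$ and $M$, the form $\ell^{m}(\ell')^{m}$ has degree $2m$ and vanishes to order at least $m$ at every point of $\mathbb X$ (to order $m$ at each point lying on exactly one of the two lines, and to order $2m$ at $L\cap M$ if that point happens to lie in $\mathbb X$). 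Hence $\alpha_{m\mathbb X}\le 2m$ for all $m$, and $\hat\alpha_{\mathbb X}=\inf_m \alpha_{m\mathbb X}/m\le 2$.

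For the reverse inequality I would exploit monotonicity of the Waldschmidt constant under passing to subconfigurations: if $\mathbb X'\subseteq\mathbb X$ then $I_{\mathbb X}^{(m)}\subseteq I_{\mathbb X'}^{(m)}$, so $\alpha_{m\mathbb X}\ge\alpha_{m\mathbb X'}$ and therefore $\hat\alpha_{\mathbb X}\ge\hat\alpha_{\mathbb X'}$. The idea is to choose $\mathbb X'$ so that its constant is already known to equal $2$. Since $n-3\ge 4$, at least three of the points $P_1,\dots,P_{n-3}$ differ from $L\cap M$; after relabelling take these to be $P_1,P_2,P_3$ and set $\mathbb X'=\{P_1,P_2,P_3,Q_1,Q_2\}$. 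Exactly three points of $\mathbb X'$ lie on $L$, the remaining two lie off $L$, and no other line meets $\mathbb X'$ in three points: the line $M$ through $Q_1,Q_2$ misses $P_1,P_2,P_3$ by the choice above, while a line through one $Q_i$ and two of the $P_j$ would have to equal $L$. Thus $\mathbb X'$ is a set of five points exactly $5-2=3$ of which are collinear, so case~(3) recalled above gives $\hat\alpha_{\mathbb X'}=2$, whence $\hat\alpha_{\mathbb X}\ge 2$.

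Combining the two bounds yields $\hat\alpha_{\mathbb X}=2$. I expect the only genuinely delicate point to be making the lower bound uniform over all admissible positions, the awkward case being when the intersection point $L\cap M$ itself belongs to $\mathbb X$ (equivalently, when $M$ passes through one of the $P_i$). The subconfiguration is chosen precisely to dodge this coincidence, so the value from case~(3) applies verbatim. Alternatively, one could run the B\'ezout decomposition of Proposition~\ref{bezout decomposition} directly on $\mathbb X$ with respect to $\{L,M\}$: writing $D=a_1L+a_2M+B(D)$ for an effective $D$ of degree $d$ with all multiplicities at least $m$, intersecting $B(D)$ with $L$ and with $M$ in \eqref{CountingMultiplicities}, and combining the two resulting inequalities gives $d\ge 2m$; here, however, the shared point $L\cap M$ forces a short case analysis on $a_1$ and $a_2$, which the monotonicity route avoids.
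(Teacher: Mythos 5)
Your proof is correct, but it is genuinely different from what the paper does: the paper offers no argument at all for this lemma, simply citing Theorem~A of Mosakhani--Haghighi, whereas you give a short self-contained proof. Your upper bound (the divisor $m(L+M)$, i.e.\ the form $\ell^m(\ell')^m$ of degree $2m$ vanishing to order at least $m$ along $\mathbb{X}$) is immediate and correct, and your lower bound via monotonicity of $\hat{\alpha}$ under subconfigurations is sound; the choice of the five-point subset $\mathbb{X}'=\{P_1,P_2,P_3,Q_1,Q_2\}$ with the $P_i$ chosen away from $L\cap M$ is exactly the right device, since it sidesteps the only degenerate incidence (the line $Q_1Q_2$ passing through one of the points on $L$) that would otherwise force a case split, as your remark about the direct B\'ezout decomposition honestly acknowledges. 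The one point worth flagging is that the recalled facts (1)--(3) appear in the paper under the standing assumption $n\ge 7$, while you invoke case (3) with $n=5$; this is harmless because the cited classifications (Dumnicki--Szemberg--Tutaj, Mosakhani--Haghighi) hold for arbitrary $n$, but if you want to avoid leaning on that, note that your $\mathbb{X}'$ contains the four points $P_1,P_2,Q_1,Q_2$, no three of which are collinear, and the Waldschmidt constant of four points in linearly general position is the standard value $2$, which gives the lower bound even more cheaply. In exchange for a few lines, your route buys a proof that is independent of the external reference, while the paper's citation buys brevity and consistency with its use of the same source elsewhere in Section~2.
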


Now let us consider the case where $Q_1, Q_2, Q_3$ are not collinear.
Set $L_1 := Q_2Q_3$, $L_2 := Q_1Q_3$, $L_3 := Q_1Q_2$
and $\mathcal{Q} := L_1\cup L_2\cup L_3$. 
A point on $\mathcal{Q}\setminus\{Q_1,Q_2,Q_3\}$ 
will  be called a \textit{$\mathcal{Q}$-collinear point}.
First we treat the case where $n=7$. 

\begin{proposition}\label{Prop-WaldschmidtConst-7Points}
Let $\mathbb{X} = \{P_1,P_2, P_3,P_4,Q_1,Q_2,Q_3\}\subseteq \mathbb{P}^2$
be a set of 7 points. 
\begin{enumerate}
	\item[(a)] If three of the points $P_1, P_2, P_3, P_4$ are $\mathcal{Q}$-collinear, then $\hat{\alpha}_{\mathbb{X}}=\frac{16}{7}.$
	\item[(b)] If exactly two of the points $P_1, P_2, P_3, P_4$ are $\mathcal{Q}$-collinear, then 
	$\hat{\alpha}_{\mathbb{X}}=\frac{7}{3}.$
	\item[(c)] If only one of the points $P_1, P_2, P_3, P_4$ is $\mathcal{Q}$-collinear, then 
	$\hat{\alpha}_{\mathbb{X}}=\frac{17}{7}.$
	\item[(d)] If none of the points $P_1, P_2, P_3, P_4$ is $\mathcal{Q}$-collinear, then 
	$\hat{\alpha}_{\mathbb{X}}=\frac{5}{2}.$
\end{enumerate} 
\end{proposition}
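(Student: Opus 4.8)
The plan is to determine $\hat{\alpha}_{\mathbb{X}}$ in each case by squeezing it between matching upper and lower bounds, using the identity $\hat{\alpha}_{\mathbb{X}}=\inf_{m\ge 1}\alpha_{m\mathbb{X}}/m$. The first step is to fix the incidence structure. Since each side $L_j$ of the triangle $\mathcal{Q}$ meets $L$ in a single point and $P_1,\dots,P_4$ are distinct, no two of these four points lie on the same side; hence a $\mathcal{Q}$-collinear point among them lies on exactly one $L_j$. After relabelling I may assume $P_1\in L_1,\ P_2\in L_2,\ P_3\in L_3$ in case (a); $P_1\in L_1,\ P_2\in L_2$ in case (b); and $P_1\in L_1$ in case (c), the remaining $P_i$ lying off $\mathcal{Q}$ in all cases. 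I would also record $\Mult_{Q_i}\mathcal{Q}=2$ and $\Mult_{P_i}L=1$, and verify that the auxiliary curves introduced below are irreducible and carry no unintended incidences (for instance, a conic through $Q_1,Q_2,Q_3$ and two free points is irreducible because no three of those five points are collinear).

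For the upper bounds I would produce, for $m$ in a suitable arithmetic progression, an explicit effective divisor of the predicted degree vanishing to order exactly $m$ at all seven points, assembled from $L$, the sides $L_1,L_2,L_3$, and a few auxiliary curves chosen to reach the points off $\mathcal{Q}$ cheaply. In case (d) no auxiliary curve is needed and $D=L^{2k}(L_1L_2L_3)^{k}$ works for $m=2k$, with $\deg D=5k$. In case (a), introducing the lines $M_j:=Q_jP_4$, the divisor $D=L^{4k}(L_1L_2L_3)^{3k}(M_1M_2M_3)^{k}$ has $m=7k$ and $\deg D=16k$. In case (b), using an irreducible conic $C$ through $Q_1,Q_2,Q_3,P_3,P_4$, one takes $D=L^{2k}(L_1L_2L_3)^{k}C^{k}$ with $m=3k$ and $\deg D=7k$. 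In case (c), using the three irreducible conics $C_{23},C_{24},C_{34}$ through $Q_1,Q_2,Q_3$ and the indicated pair of free points, one takes $D=L^{5k}(L_1L_2L_3)^{2k}(C_{23}C_{24}C_{34})^{k}$ with $m=7k$ and $\deg D=17k$. In each case a direct count of orders of vanishing at the seven points yields the stated upper bound on $\hat{\alpha}_{\mathbb{X}}$. These exponents are not guessed but are the optimal vertex of the linear program minimising the degree of a non-negative combination of the chosen curves subject to order at least $m$ at every point.

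For the lower bounds I would take an arbitrary effective divisor $D$ of degree $d$ with $\Mult_{P}D\ge m$ at all seven points and form its B\'ezout decomposition $D=aL+\sum_j b_jL_j+\cdots+B(D)$ with respect to the very same family of curves used above (Proposition \ref{bezout decomposition}). Because order of vanishing is additive on effective divisors, the residual multiplicities $m-a-b_j,\ m-b_i-b_j-\cdots$ appearing in \eqref{CountingMultiplicities} are exactly the multiplicities of the effective divisor $B(D)$, hence non-negative; this supplies, besides the B\'ezout inequalities (one per curve, with $B(D)\cdot\mathcal{C}_j=\deg\mathcal{C}_j\cdot\deg B(D)$), a second family of linear constraints. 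Substituting $\deg B(D)=d-a-\sum_j b_j-\cdots$ turns everything into a finite system of linear inequalities in $d,a,b_j,\dots$, and the claim is that this system forces $d\ge\tfrac{5}{2}m,\ \tfrac{16}{7}m,\ \tfrac{7}{3}m,\ \tfrac{17}{7}m$ in cases (d), (a), (b), (c) respectively. Case (d) is immediate: summing the four line inequalities with equal weights cancels all peeling terms and gives $4d\ge 10m$.

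The main obstacle is precisely the lower bound in cases (a)--(c). Unlike (d), the plain sum of the B\'ezout inequalities does not eliminate the peeling coefficients $a,b_j,w_j,c_{jk}$, so one must genuinely solve the associated linear program, whose binding constraints mix the B\'ezout inequalities with the residual-multiplicity bounds. Concretely, I would exhibit an explicit non-negative combination of all these inequalities in which the coefficients of $a,b_j,\dots$ cancel and the right-hand side equals $\tfrac{16}{7}m$ (resp.\ $\tfrac{7}{3}m,\ \tfrac{17}{7}m$); finding the correct weights, together with the case-dependent choice of auxiliary curves and the verification of their geometry, is where the real work lies. Since the resulting bound $d\ge(\text{value})\,m$ then holds for every effective $D$ and every $m$, it controls $\alpha_{m\mathbb{X}}$ uniformly, and combined with the upper bound it pins down $\hat{\alpha}_{\mathbb{X}}$ to the asserted value.
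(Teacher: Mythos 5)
Your architecture is the same as the paper's: you choose exactly the same auxiliary curves (your $M_j=Q_jP_4$ are the paper's $H_j$; the conic through $Q_1,Q_2,Q_3$ and the two free points in (b); the three such conics in (c)), exhibit the identical optimal divisors for the upper bounds, and in case (d) your sum of the four line inequalities is precisely the paper's combination with weights $1$ and $3$, giving $4d\ge 10m$. But in cases (a)--(c) you stop exactly where the proof lives: you assert that non-negative multipliers cancelling the peeling coefficients exist, without producing them. The paper does produce them: in (a), weights $3,27,18,1$ on the system \eqref{eq1}--\eqref{eq4} (that is, on $d\ge 3p+q+3r$, $d\ge 2m+p-r$, $d\ge 3m+r-2p$, $d\ge 4m-3q$) give $49d\ge 112m$, i.e.\ $d\ge\frac{16}{7}m$; similarly weights $1,2,1,1,2$ in (b) and $2,12,16,1,6$ in (c). Your appeal to linear programming duality does not discharge this step: duality guarantees a multiplier certificate for whatever the LP value happens to be, but you must still verify that this value equals $\frac{16}{7}$ (resp.\ $\frac{7}{3}$, $\frac{17}{7}$) and not something smaller, and that verification is exactly the computation you defer.

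More seriously, the auxiliary constraint family you propose to feed into the LP is false. The residuals $m-\sum_{\ell}a_\ell m_{i\ell}$ appearing in \eqref{CountingMultiplicities} are not ``exactly the multiplicities of $B(D)$'': additivity of multiplicity gives $\Mult_{P_i}B(D)=\Mult_{P_i}D-\sum_\ell a_\ell m_{i\ell}$, and a divisor containing $m\mathbb{X}$ may vanish to order strictly larger than $m$ at some points, so the residuals are only lower bounds for $\Mult_{P_i}B(D)$ and can be negative. Indeed the extremal divisors themselves violate your constraints: in case (d) the optimal divisor $D=m(L_1+L_2+L_3)+2mL$ has Bézout decomposition with $p=m$, $q=2m$, $B(D)=0$, and its residual at $Q_1\in L_2\cap L_3$ is $m-2p=-m<0$, because $\Mult_{Q_1}D=2m>m$. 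Since imposing residual non-negativity cuts off precisely the divisors realizing $\hat{\alpha}_{\mathbb{X}}$, a multiplier certificate that uses those constraints proves nothing about arbitrary divisors and could certify a ``lower bound'' exceeding the true Waldschmidt constant. The valid system consists only of the Bézout inequalities of Proposition~\ref{bezout decomposition} together with $\deg B(D)\ge 0$ (effectiveness of the reduction) --- which is what the paper uses, and within which your missing certificates must be found.
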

\begin{proof}
	
Proof for parts (a) and (b) of this result 
	can be found in \cite{FGHLMS2017}. Here we give a different proof 
	using the methods in \cite{Dumnicki-Szemberg-Tutaj2016}.
	
(a)\quad We may assume that $P_1, P_2, P_3$ lie on $L_1, L_2, L_3$, respectively
(see Figure~\ref{Figure001}). Recall that $L$ is the line passing through 
$P_1, P_2, P_3$, and $P_4$. Put $H_1 := P_4Q_1$, $H_2 :=P_4Q_2$ and 
$H_3 :=P_4Q_3$. Then the divisor 
$D' = m(H_1+H_2+H_3)+3m(L_1+L_3+L_3)+4mL$ is 
of degree $16m$ and vanishes to order $7m$ at every point of $\mathbb{X}$. 
Hence we get an upper bound $\hat{\alpha}_{\mathbb{X}}\le \frac{16}{7}$ for the Waldschmidt constant of $\mathbb{X}$.
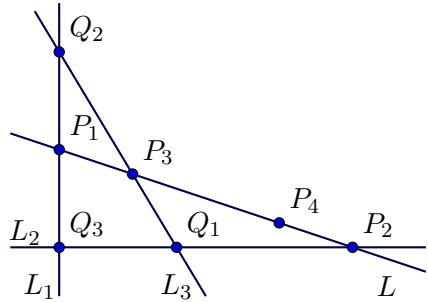
\begin{figure}[ht] 
	\begin{center}
		\begin{tikzpicture}[scale=0.65]
			\draw[-,blue!30!black,thick] (-1,0) -- (7.5,0);
			\draw[-,blue!30!black,thick] (0,-1) -- (0,5);
			\draw[-,blue!30!black,thick] (-0.5,29/6) -- (3,-1);
			\draw[-,blue!30!black,thick] (-1,7/3) -- (7.5,-0.5);
			\filldraw[color=black, fill=blue!80!black] (0,0) circle (3pt)
			node[anchor=south west]{$Q_3$};
			\filldraw[color=black, fill=blue!80!black] (2.4,0) circle (3pt)
			node[anchor=south west]{$Q_1$};
			\filldraw[color=black, fill=blue!80!black] (0,4) circle (3pt)
			node[anchor=south west]{$Q_2$};
			\filldraw[color=black, fill=blue!80!black] (6,0) circle (3pt)
			node[anchor=south west]{$P_2$};
			\filldraw[color=black, fill=blue!80!black] (0,2) circle (3pt)
			node[anchor=south west]{$P_1$};
			\filldraw[color=black, fill=blue!80!black] (1.5,1.5) circle (3pt)
			node[anchor=south west]{$P_3$};
			\filldraw[color=black, fill=blue!80!black] (4.5,0.5) circle (3pt)
			node[anchor=south west]{$P_4$};
			\node (L) at (6.7,-0.8) {$L$};
			\node (L1) at (-0.4,-0.8) {$L_1$};
			\node (L2) at (-0.7,0.3) {$L_2$};
			\node (L3) at (2.4,-0.8) {$L_3$};
		\end{tikzpicture}\vspace*{-0.3cm}
	\end{center}
	\caption{Seven points with three $\mathcal{Q}$-collinear points \label{Figure001}}
\end{figure}

Next, to show  $\hat{\alpha}_{\mathbb{X}}\ge \frac{16}{7}$, 
let $D$ be a divisor of degree $d$ vanishing along $m\mathbb{X}$.
The B\'ezout decomposition of $D$ with respect to the set $\mathbb{X}$ 
and lines $L_1, L_2, L_3, L, H_1, H_2$ and $H_3$ is 
$$
D = p(L_1+L_2+L_3)+qL+r(H_1+H_2+H_3)+B(D),
$$
where $B(D)$ is the B\'{e}zout reduction and $p,q,r\in\mathbb{N}$. 
Applying Proposition~\ref{bezout decomposition} for the curves $B(D)$ and $H_1,L_1,L$, we have
$$
\begin{aligned}
	B(D)\cdot H_1&=d-3p-q-3r \ge (m-q-3r)+(m-2p-r),\\
	B(D)\cdot L_1&=d-3p-q-3r \ge 2(m-2p-r)+(m-p-q),\\
	B(D)\cdot L&=d-3p-q-3r \ge 3(m-p-q)+(m-q-3r).
\end{aligned}
$$
Combining with the condition $B(D)\ge 0 $ gives a system of inequations
\begin{align}
	d&\ge 3p+q+3r,  \quad \label{eq1}\\
	d&\ge 2m+p-r,  \quad  \label{eq2}\\
	d&\ge 3m+r-2p,  \quad  \label{eq3}\\
	d&\ge 4m-3q.  \quad  \label{eq4} 
\end{align}
Multiplying the inequations (\ref{eq1}), (\ref{eq2}), (\ref{eq3}) 
and (\ref{eq4})  by $3$; $27$; $18$ and $1$, respectively, 
and then adding side-by-side the resulting inequations, we get
$d\ge \frac{16}{7}m.$ It  follows that $\hat{\alpha}_{\mathbb{X}}\ge \frac{16}{7}$, as wanted.

\medskip\noindent (b)\quad 
Suppose that $P_2, P_3$ lie on $L_2, L_3$, respectively
(see Figure~\ref{Figure002}). Let $C$ be the irreducible conic 
passing through $Q_1, Q_2, Q_3, P_1$, $P_4$. Then the divisor $D'=m(L_1+L_2+L_3+C+2L)$ is of degree $7m$ 
and vanishes to order $3m$ at every point of $\mathbb{X}$. 
This implies $\hat{\alpha}_{\mathbb{X}}\le \frac{7}{3}.$
\begin{figure}[ht]  
	\begin{center}
		\begin{tikzpicture}[scale=0.6]
			\draw[-,blue!30!black,thick] (-1,0) -- (11,0); 
			\draw[-,blue!30!black,thick] (1/2,-3) -- (11,1/2); 
			\draw[-,blue!30!black,thick] (2,-3) -- (2,3.8); 
			\draw[-,blue!30!black,thick] (1.5,15/4) -- (6,-3);
			--
			\filldraw[color=black, fill=blue!80!black] (0,0) circle (3pt)
			node[anchor=south west]{$P_1$};
			\filldraw[color=black, fill=blue!80!black] (2,0) circle (3pt)
			node[anchor=south west]{$P_2$};
			\filldraw[color=black, fill=blue!80!black] (4,0) circle (3pt)
			node[anchor=south west]{$P_3$};
			\filldraw[color=black, fill=blue!80!black] (6,0) circle (3pt)
			node[anchor=south west]{$P_4$};
			--				
			\filldraw[color=black, fill=blue!80!black] (2,3) circle (3pt)
			node[anchor=south west]{$Q_1$};
			\filldraw[color=black, fill=blue!80!black] (2,-2.5) circle (3pt);
			\filldraw[color=black, fill=blue!80!black] (5,-3/2) circle (3pt);
			--
			\node (Q2) at (2.4,-2) {$Q_2$};
			\node (Q3) at (5.3,-1) {$Q_3$};
			\node (L) at (10.5,-0.5) {$L$};
			\node (L1) at (0.2,-2.8) {$L_1$};
			\node (L2) at (2.4,-2.8) {$L_2$};
			\node (L3) at (6.5,-2.8) {$L_3$};
		\end{tikzpicture}\vspace*{-0.3cm}
	\end{center}
	\caption{Seven points with exactly two $\mathcal{Q}$-collinear points \label{Figure002}}
\end{figure}
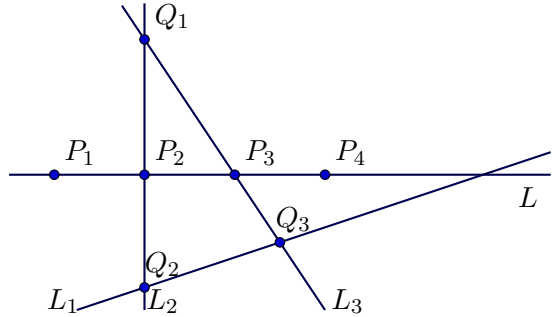

On the other hand, let $D$ be a divisor of degree $d$ vanishing 
along $m\mathbb{X}$. The B\'{e}zout decomposition of $D$ 
with respect to the set $\mathbb{X}$ and curves $L_1, L_2, L_3, L,C$ is
\[
D=kL_1+p(L_2+L_3)+qL+rC+B(D),
\]
where $B(D)$ is the B\'{e}zout reduction. 
Using Proposition~\ref{bezout decomposition}, we obtain
\begin{align*}
	B(D)\cdot L_1&=d-k-2p-q-2r \ge 2(m-k-p-r),\\
	B(D)\cdot L_2&=d-k-2p-q-2r \ge (m-2p-r)+(m-k-p-r)+(m-p-q),\\
	B(D)\cdot L &=d-k-2p-q-2r \ge 2(m-q-r)+2(m-p-q),\\
	B(D)\cdot C &=2(d-k-2p-q-2r)\ge 2(m-q-r)+2(m-k-p-r) +(m-2p-r).
\end{align*}
Together with the condition $B(D)\ge 0$, we get 
\begin{align}
	d&\ge k+2p+q+2r,  \quad \label{eq5}\\
	d&\ge 2m-k+q,  \quad \label{eq6}\\
	d& \ge 3m-2p,   \quad \label{eq7}\\
	d &\ge  4m+k-3q,   \quad \label{eq8}\\
	2d & \ge 5m-r.   \quad \label{eq9}
\end{align}
Multiplying the inequations $(\ref{eq5})$; $(\ref{eq6})$; $(\ref{eq7})$;
$(\ref{eq8})$ and $(\ref{eq9})$ by $1;$ $2$; $1$; $1$ and $2$, 
respectively, and then adding the resulting inequations side-by-side,
we obtain $d\ge \frac{7}{3}m.$ Hence we can conclude that $\hat{\alpha}_{\mathbb{X}}\ge \frac{7}{3}.$

\medskip\noindent (c)\quad 
Suppose that $P_4$ lie on $L_3$ (see Figure~\ref{Figure003}).
Let $C_1, C_2, C_3$ be the irreducible conics passing through 
$Q_1, Q_2, Q_3, P_2, P_3;$ $Q_1, Q_2, Q_3, P_1, P_3;$ 
and $Q_1, Q_2, Q_3, P_1, P_2,$ respectively.
The divisor $D'=m(C_1+C_2+C_3)+2m(L_1+L_2+L_3)+5mL$ is of degree $17m$ 
and vanishes to  order $7m$ at every point of $\mathbb{X}$. 
It follows that $\hat{\alpha}_{\mathbb{X}}\le \frac{17}{7}.$
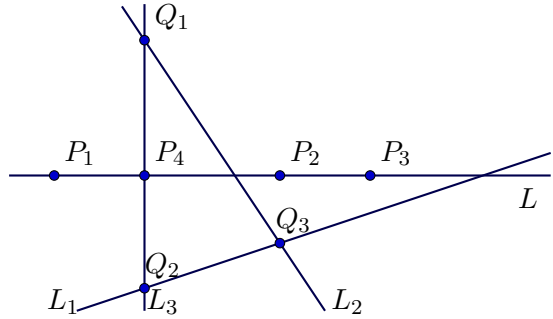
\begin{figure}[ht] 
	\begin{center}
		\begin{tikzpicture}[scale=0.6]
			\draw[-,blue!30!black,thick] (-1,0) -- (11,0); 
			\draw[-,blue!30!black,thick] (1/2,-3) -- (11,1/2); 
			\draw[-,blue!30!black,thick] (2,-3) -- (2,3.8); 
			\draw[-,blue!30!black,thick] (1.5,15/4) -- (6,-3);
			--
			\filldraw[color=black, fill=blue!80!black] (0,0) circle (3pt)
			node[anchor=south west]{$P_1$};
			\filldraw[color=black, fill=blue!80!black] (2,0) circle (3pt)
			node[anchor=south west]{$P_4$};
			\filldraw[color=black, fill=blue!80!black] (5,0) circle (3pt)
			node[anchor=south west]{$P_2$};
			\filldraw[color=black, fill=blue!80!black] (7,0) circle (3pt)
			node[anchor=south west]{$P_3$};
			--				
			\filldraw[color=black, fill=blue!80!black] (2,3) circle (3pt)
			node[anchor=south west]{$Q_1$};
			\filldraw[color=black, fill=blue!80!black] (2,-2.5) circle (3pt);
			\filldraw[color=black, fill=blue!80!black] (5,-3/2) circle (3pt);
			--
			\node (Q2) at (2.4,-2) {$Q_2$};
			\node (Q3) at (5.3,-1) {$Q_3$};
			\node (L) at (10.5,-0.5) {$L$};
			\node (L1) at (0.2,-2.8) {$L_1$};
			\node (L3) at (2.4,-2.8) {$L_3$};
			\node (L2) at (6.5,-2.8) {$L_2$};
		\end{tikzpicture}\vspace*{-0.3cm}
	\end{center}
	\caption{Seven points with only one $\mathcal{Q}$-collinear point
	\label{Figure003}}
\end{figure}

To prove the inverse side, let $D$ be a divisor of degree $d$ 
vanishing along $m\mathbb{X}$. The B\'{e}zout decomposition of $D$ with respect to the set $\mathbb{X}$ and curves $C_1, C_2, C_3, L_1, L_2, L_3$ and $L$ is 
$$
D=k(C_1+C_2+C_3)+pL_3+qL + r(L_1+L_2) +B(D).
$$
Applying Proposition~\ref{bezout decomposition}, we get
$$
\begin{aligned}
	B(D)\cdot C_1&= 2(d-6k-p-q-2r) \ge 2(m-3k-p-r)+(m-3k-2r)+2(m-2k-q),\\	
	B(D)\cdot L_1&=d-6k-p-q-2r \ge (m-3k-p-r)+(m-3k-2r),\\
	B(D)\cdot L_3 &=d-6k-p-q-2r \ge 2(m-3k-p-r)+(m-p-q),\\
	B(D)\cdot L&=d-6k-p-q-2r \ge 3(m-2k-q)+(m-p-q).
\end{aligned}
$$
Together with $B(D)\ge 0$, we get a system of inequations
\begin{align}
	d &\ge 6k+p+q+2r, \quad \label{eq10} \\
	2d& \ge 5m-k, \quad \label{eq11} \\
	d&\ge 2m+q-r, \quad \label{eq12} \\
	d&\ge 3m-2p, \quad \label{eq13} \\
	d& \ge 4m-3q+2r. \quad \label{eq14} 
\end{align}
Multiplying the inequations \eqref{eq10}; \eqref{eq11}; 
\eqref{eq12}; \eqref{eq13} and \eqref{eq14} by 
$2$; $12$;  $16$; $1$ and $6$, respectively, 
and then adding them side-by-side yield $d\ge \frac{17}{7}m.$ 
Hence we get $\hat{\alpha}_{\mathbb{X}}\ge \frac{17}{7}$, as desired.

\medskip\noindent (d)\quad 
Finally, consider the configuration of $\mathbb{X}$ as in 
Figure~\ref{Figure004}.
\begin{figure}[ht]  
	\begin{center}
		\begin{tikzpicture}[scale=0.6]
			\draw[-,blue!30!black,thick] (-1,0) -- (11,0); 
			\draw[-,blue!30!black,thick] (1/2,-3) -- (11,1/2); 
			\draw[-,blue!30!black,thick] (2,-3) -- (2,3.8); 
			\draw[-,blue!30!black,thick] (1.5,15/4) -- (6,-3);
			--
			\filldraw[color=black, fill=blue!80!black] (0,0) circle (3pt)
			node[anchor=south west]{$P_1$};
			\filldraw[color=black, fill=blue!80!black] (2.5,0) circle (3pt)
			node[anchor=south west]{$P_2$};
			\filldraw[color=black, fill=blue!80!black] (5,0) circle (3pt)
			node[anchor=south west]{$P_3$};
			\filldraw[color=black, fill=blue!80!black] (7,0) circle (3pt)
			node[anchor=south west]{$P_4$};
			--				
			\filldraw[color=black, fill=blue!80!black] (2,3) circle (3pt)
			node[anchor=south west]{$Q_1$};
			\filldraw[color=black, fill=blue!80!black] (2,-2.5) circle (3pt);
			\filldraw[color=black, fill=blue!80!black] (5,-3/2) circle (3pt);
			--
			\node (Q2) at (2.4,-2) {$Q_2$};
			\node (Q3) at (5.3,-1) {$Q_3$};
			\node (L) at (10.5,-0.5) {$L$};
			\node (L1) at (0.2,-2.8) {$L_1$};
			\node (L3) at (2.4,-2.8) {$L_3$};
			\node (L2) at (6.5,-2.8) {$L_2$};
		\end{tikzpicture}\vspace*{-0.3cm}
	\end{center}
	\caption{Seven points with no $\mathcal{Q}$-collinear points 
	\label{Figure004}}
\end{figure}
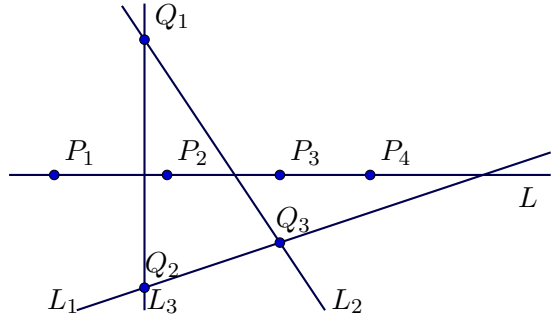
The divisor $D'=m(L_1+L_2+L_3)+2mL$ is of degree $5m$ 
and vanishes to order $2m$ at every point of $\mathbb{X}$. 
So, $\hat{\alpha}_{\mathbb{X}}\le \frac{5}{2}.$
Next, let $D$ be a divisor of degree $d$ vanishing along $m\mathbb{X}$. 
The B\'{e}zout decomposition of $D$ with respect to $\mathbb{X}$ and
the lines $L_1, L_2, L_3$ and $L$ is
$$
D=p(L_1+L_2+L_3)+qL+B(D).
$$ 
An application of Proposition~\ref{bezout decomposition} to the reduction $B(D)$ 
and $L$ or $L_1$ yields
\begin{align*}
	B(D)\cdot L &=d-3p-q \ge 4(m-q),\\
	B(D)\cdot L_1 &=d-3p-q\ge 2(m-2p),
\end{align*}
which implies
\begin{align}
	d&\ge 4m+3n-3t, \quad \label{eq017}\\
	d&\ge 2m-n+t. \quad \label{eq018}
\end{align}
Multiplying (\ref{eq017}) and (\ref{eq018}) by $1$ and $3$, respectively,
and then adding the resulting inequalities side-by-side yields 
$d\ge \frac{5}{2}m$. Therefore, we get 
$\hat{\alpha}_{\mathbb{X}}\ge \frac{5}{2}.$
\end{proof}

Below are some direct consequences of this Proposition.

\begin{corollary}\label{Cor-Waldschmidt-8Points-01}
Suppose that $\mathbb{X}=\{P_1,\dots,P_5,Q_1,Q_2,Q_3\}\subseteq \mathbb{P}^2$
and that three  points among $P_1,\dots, P_5$ are $\mathcal{Q}$-collinear. 
Then $\hat{\alpha}_{\mathbb{X}}=\frac{7}{3}.$ 
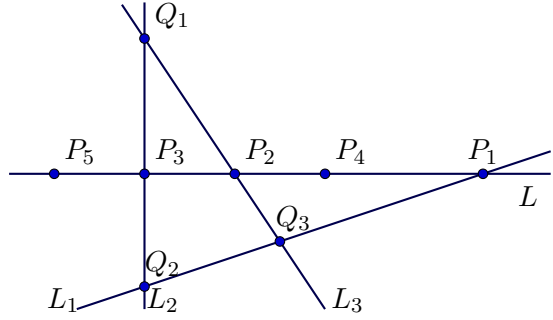
\begin{figure}[ht]  
		\begin{center}
			\begin{tikzpicture}[scale=0.6]
				\draw[-,blue!30!black,thick] (-1,0) -- (11,0); 
				\draw[-,blue!30!black,thick] (1/2,-3) -- (11,1/2); 
				\draw[-,blue!30!black,thick] (2,-3) -- (2,3.8); 
				\draw[-,blue!30!black,thick] (1.5,15/4) -- (6,-3);
				--
				\filldraw[color=black, fill=blue!80!black] (0,0) circle (3pt)
				node[anchor=south west]{$P_5$};
				\filldraw[color=black, fill=blue!80!black] (2,0) circle (3pt)
				node[anchor=south west]{$P_3$};
				\filldraw[color=black, fill=blue!80!black] (4,0) circle (3pt)
				node[anchor=south west]{$P_2$};
				\filldraw[color=black, fill=blue!80!black] (6,0) circle (3pt)
				node[anchor=south west]{$P_4$};
				\filldraw[color=black, fill=blue!80!black] (9.5,0) circle (3pt)
				node[anchor=south]{$P_1$};
				--				
				\filldraw[color=black, fill=blue!80!black] (2,3) circle (3pt)
				node[anchor=south west]{$Q_1$};
				\filldraw[color=black, fill=blue!80!black] (2,-2.5) circle (3pt);
				\filldraw[color=black, fill=blue!80!black] (5,-3/2) circle (3pt);
				--
				\node (Q2) at (2.4,-2) {$Q_2$};
				\node (Q3) at (5.3,-1) {$Q_3$};
				\node (L) at (10.5,-0.5) {$L$};
				\node (L1) at (0.2,-2.8) {$L_1$};
				\node (L2) at (2.4,-2.8) {$L_2$};
				\node (L3) at (6.5,-2.8) {$L_3$};
			\end{tikzpicture}\vspace*{-0.3cm}
		\end{center}
		\caption{Eight points with three $\mathcal{Q}$-collinear points}
\end{figure}
\end{corollary}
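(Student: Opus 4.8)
The plan is to establish the two bounds $\hat\alpha_{\mathbb{X}}\le\frac73$ and $\hat\alpha_{\mathbb{X}}\ge\frac73$ separately: the first by exhibiting an explicit divisor in the spirit of part~(b) of Proposition~\ref{Prop-WaldschmidtConst-7Points}, and the second by passing to a suitable $7$-point subconfiguration and combining Proposition~\ref{Prop-WaldschmidtConst-7Points}(b) with the monotonicity of the Waldschmidt constant under inclusion. First I would normalize the configuration. Since each $L_i$ meets $L$ in a single point and $L_i\cap L_j=\{Q_k\}$ with $Q_k\notin L$, no point of $\mathbb{X}$ on $L$ can lie on two of the $L_i$; hence the three $\mathcal{Q}$-collinear points among $P_1,\dots,P_5$ sit one on each of $L_1,L_2,L_3$, and after relabelling I may assume $P_i\in L_i$ for $i=1,2,3$, while $P_4,P_5$ are not $\mathcal{Q}$-collinear.

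For the upper bound, let $C$ be the conic through $Q_1,Q_2,Q_3,P_4,P_5$. I would first check that $C$ is irreducible by verifying that no three of these five points are collinear: $Q_1,Q_2,Q_3$ are not collinear, no $Q_i$ lies on $L$, and $P_4,P_5$ lie on no $L_i=Q_jQ_k$ because they are not $\mathcal{Q}$-collinear. Then the divisor
\[
D'\;=\;m\,(L_1+L_2+L_3+C+2L)
\]
has degree $7m$, and a direct incidence count shows it vanishes to order at least $3m$ at every point of $\mathbb{X}$: at each $Q_i$ from two of the lines $L_j$ together with $C$; at $P_1,P_2,P_3$ from the single $L_i$ through it together with $2L$; and at $P_4,P_5$ from $C$ together with $2L$. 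This yields $\hat\alpha_{\mathbb{X}}\le\frac{7m}{3m}=\frac73$.

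For the lower bound, I would delete one of the three $\mathcal{Q}$-collinear points, say $P_3$, to obtain the $7$-point set $\mathbb{Y}=\{P_1,P_2,P_4,P_5,Q_1,Q_2,Q_3\}$, in which $P_1,P_2,P_4,P_5$ lie on $L$ and exactly two of them (namely $P_1,P_2$) are $\mathcal{Q}$-collinear. Proposition~\ref{Prop-WaldschmidtConst-7Points}(b) then gives $\hat\alpha_{\mathbb{Y}}=\frac73$. Since $\mathbb{Y}\subseteq\mathbb{X}$ forces $I_{\mathbb{X}}^{(m)}\subseteq I_{\mathbb{Y}}^{(m)}$, and hence $\alpha_{m\mathbb{X}}\ge\alpha_{m\mathbb{Y}}$ for all $m$, monotonicity of the limit gives $\hat\alpha_{\mathbb{X}}\ge\hat\alpha_{\mathbb{Y}}=\frac73$. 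Together with the upper bound this proves the claim.

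The one point that requires care, and the reason this is a genuine consequence of part~(b) rather than part~(a), is the choice of subconfiguration in the lower bound: one must delete a $\mathcal{Q}$-collinear point so as to land in the two-$\mathcal{Q}$-collinear case of value $\frac73$. Deleting instead the non-$\mathcal{Q}$-collinear point $P_5$ would give the set $\{P_1,P_2,P_3,P_4,Q_1,Q_2,Q_3\}$ with three $\mathcal{Q}$-collinear points, which falls under part~(a) and only yields the weaker bound $\frac{16}{7}<\frac73$. Thus the crux is the observation that removing one $\mathcal{Q}$-collinear point raises the subconfiguration into the case whose constant already matches the target $\frac73$.
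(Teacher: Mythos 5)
Your proof is correct and follows essentially the same route as the paper: the same divisor $m(L_1+L_2+L_3+C+2L)$ with the conic through $Q_1,Q_2,Q_3,P_4,P_5$ for the upper bound, and deletion of one $\mathcal{Q}$-collinear point to reduce to Proposition~\ref{Prop-WaldschmidtConst-7Points}(b) via monotonicity for the lower bound (the paper deletes $P_1$ rather than $P_3$, which is immaterial). Your added checks — that the three $\mathcal{Q}$-collinear points must lie one on each $L_i$, and that the conic $C$ is irreducible — are correct details the paper leaves implicit.
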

\begin{proof}
Assume that $P_i$ lies on $L_i$ for $i=1,2,3$. Let $C$ be the irreducible 
conic passing through $Q_1, Q_2, Q_3$, $P_4,P_5.$ Then
the divisor $D=m(L_1+L_2+L_3+C+2L)$ is of degree $7m$ and vanishes 
to order $3m$ at every point of $\mathbb{X}$. So, we get $\hat{\alpha}_{\mathbb{X}}\le \frac{7}{3}.$
On the other hand, consider the subset 
$\mathbb{Y}=\{P_2, P_3, P_4, P_5, Q_1, Q_2, Q_3\}$ of $\mathbb{X}$.
Clearly, $I_{\mathbb{X}}\subseteq I_{\mathbb{Y}}.$ Hence
$\hat{\alpha}_{\mathbb{X}}\ge \hat{\alpha}_{\mathbb{Y}}=\frac{7}{3}$ 
by  Proposition~\ref{Prop-WaldschmidtConst-7Points}(b). 
Thus $\hat{\alpha}_{\mathbb{X}}= \frac{7}{3}.$
\end{proof}

\begin{corollary}\label{Cor-Waldschmidt-89Points-02}
	Let $\mathbb{X}$ be the set of 7 points with configuration 
	given in Proposition~\ref{Prop-WaldschmidtConst-7Points}(c), 
	let $P_5$ (respectively $P_6$ ) be the intersection point 
	of $L_2$ (respectively $L_1$) and $L$, and let 
	$\mathbb{Y}=\mathbb{X}\cup \{P_5\}$ and $\mathbb{Z}=\mathbb{X}\cup\{P_5,P_6\}$
	(see Figure~\ref{Figure005}). 
	Then $
	\hat{\alpha}_{\mathbb{X}}=\hat{\alpha}_{\mathbb{Y}}
	=\hat{\alpha}_{\mathbb{Z}}=\frac{17}{7}.
	$
	\begin{figure}[ht] 
		\begin{center}
			\begin{tikzpicture}[scale=0.6]
				\draw[-,blue!30!black,thick] (-1,0) -- (11,0); 
				\draw[-,blue!30!black,thick] (1/2,-3) -- (11,1/2); 
				\draw[-,blue!30!black,thick] (2,-3) -- (2,3.8); 
				\draw[-,blue!30!black,thick] (1.5,15/4) -- (6,-3);
				--
				\filldraw[color=black, fill=blue!80!black] (0,0) circle (3pt)
				node[anchor=south west]{$P_1$};
				\filldraw[color=black, fill=blue!80!black] (2,0) circle (3pt)
				node[anchor=south west]{$P_4$};
				\filldraw[color=black, fill=blue!80!black] (4,0) circle (3pt)
				node[anchor=south west]{$P_5$};
				\filldraw[color=black, fill=blue!80!black] (5,0) circle (3pt)
				node[anchor=south west]{$P_2$};
				\filldraw[color=black, fill=blue!80!black] (7,0) circle (3pt)
				node[anchor=south west]{$P_3$};
				--				
				\filldraw[color=black, fill=blue!80!black] (2,3) circle (3pt)
				node[anchor=south west]{$Q_1$};
				\filldraw[color=black, fill=blue!80!black] (2,-2.5) circle (3pt);
				\filldraw[color=black, fill=blue!80!black] (5,-3/2) circle (3pt);
				--
				\node (Q2) at (2.4,-2) {$Q_2$};
				\node (Q3) at (5.3,-1) {$Q_3$};
				\node (L) at (10.5,-0.5) {$L$};
				\node (L1) at (0.2,-2.8) {$L_1$};
				\node (L3) at (2.4,-2.8) {$L_3$};
				\node (L2) at (6.5,-2.8) {$L_2$};
			\end{tikzpicture}\quad 
			\begin{tikzpicture}[scale=0.6]
				\draw[-,blue!30!black,thick] (-1,0) -- (11,0); 
				\draw[-,blue!30!black,thick] (1/2,-3) -- (11,1/2); 
				\draw[-,blue!30!black,thick] (2,-3) -- (2,3.8); 
				\draw[-,blue!30!black,thick] (1.5,15/4) -- (6,-3);
				--
				\filldraw[color=black, fill=blue!80!black] (0,0) circle (3pt)
				node[anchor=south west]{$P_1$};
				\filldraw[color=black, fill=blue!80!black] (2,0) circle (3pt)
				node[anchor=south west]{$P_4$};
				\filldraw[color=black, fill=blue!80!black] (4,0) circle (3pt)
				node[anchor=south west]{$P_5$};
				\filldraw[color=black, fill=blue!80!black] (5,0) circle (3pt)
				node[anchor=south west]{$P_2$};
				\filldraw[color=black, fill=blue!80!black] (7,0) circle (3pt)
				node[anchor=south west]{$P_3$};
				\filldraw[color=black, fill=blue!80!black] (9.5,0) circle (3pt)
				node[anchor=south]{$P_6$};
				--				
				\filldraw[color=black, fill=blue!80!black] (2,3) circle (3pt)
				node[anchor=south west]{$Q_1$};
				\filldraw[color=black, fill=blue!80!black] (2,-2.5) circle (3pt);
				\filldraw[color=black, fill=blue!80!black] (5,-3/2) circle (3pt);
				--
				\node (Q2) at (2.4,-2) {$Q_2$};
				\node (Q3) at (5.3,-1) {$Q_3$};
				\node (L) at (10.5,-0.5) {$L$};
				\node (L1) at (0.2,-2.8) {$L_1$};
				\node (L3) at (2.4,-2.8) {$L_3$};
				\node (L2) at (6.5,-2.8) {$L_2$};
			\end{tikzpicture}\vspace*{-0.3cm}
		\end{center}
		\caption{Eight or nine points with at least two 
			$\mathcal{Q}$-collinear points \label{Figure005}}
	\end{figure}
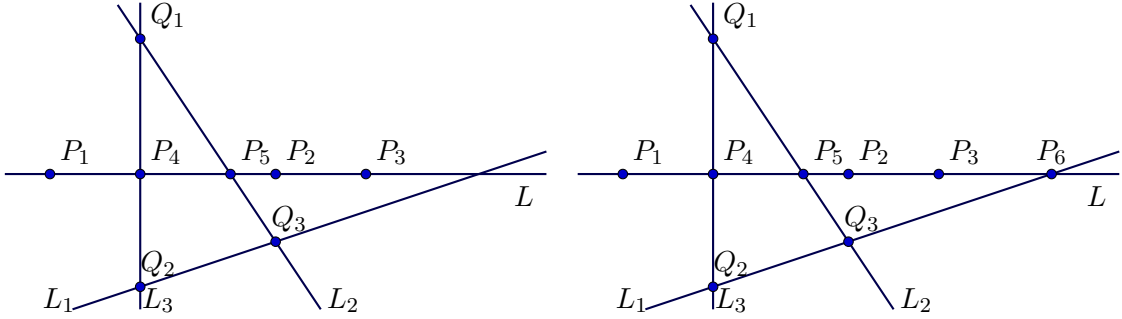
\end{corollary}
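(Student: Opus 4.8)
The plan is to combine the monotonicity of the Waldschmidt constant under inclusion of point sets with the explicit divisor already constructed in the proof of Proposition~\ref{Prop-WaldschmidtConst-7Points}(c). Since $\mathbb{X}\subseteq\mathbb{Y}\subseteq\mathbb{Z}$, we have the reverse inclusions of symbolic powers $I_{\mathbb{Z}}^{(m)}\subseteq I_{\mathbb{Y}}^{(m)}\subseteq I_{\mathbb{X}}^{(m)}$ for every $m$, whence $\alpha_{m\mathbb{X}}\le\alpha_{m\mathbb{Y}}\le\alpha_{m\mathbb{Z}}$ and therefore $\hat{\alpha}_{\mathbb{X}}\le\hat{\alpha}_{\mathbb{Y}}\le\hat{\alpha}_{\mathbb{Z}}$. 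By Proposition~\ref{Prop-WaldschmidtConst-7Points}(c) we already know $\hat{\alpha}_{\mathbb{X}}=\frac{17}{7}$, so it suffices to establish the single upper bound $\hat{\alpha}_{\mathbb{Z}}\le\frac{17}{7}$; the chain of inequalities will then force all three constants to equal $\frac{17}{7}$.

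To obtain this upper bound I would reuse the divisor
\[
D'=m(C_1+C_2+C_3)+2m(L_1+L_2+L_3)+5mL
\]
of degree $17m$ from Proposition~\ref{Prop-WaldschmidtConst-7Points}(c), which vanishes to order $7m$ at every point of $\mathbb{X}$. The key point is that the two new points are by definition $P_5=L_2\cap L$ and $P_6=L_1\cap L$, so they lie on components of $D'$ whose coefficients are already large. The components of $D'$ through $P_5$ include $L_2$ (coefficient $2m$) and $L$ (coefficient $5m$), so $\Mult_{P_5}D'\ge 2m+5m=7m$; the same computation with $L_1$ in place of $L_2$ gives $\Mult_{P_6}D'\ge 7m$. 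Hence $D'$ vanishes to order at least $7m$ at every point of $\mathbb{Z}$, so it represents an element of $I_{\mathbb{Z}}^{(7m)}$ of degree $17m$, yielding $\alpha_{7m\mathbb{Z}}\le 17m$ and thus $\hat{\alpha}_{\mathbb{Z}}\le\frac{17m}{7m}=\frac{17}{7}$.

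Before concluding I would record two sanity checks that legitimize the construction. First, because part (c) assumes that among $P_1,\dots,P_4$ only $P_4$ is $\mathcal{Q}$-collinear (lying on $L_3$), neither $L_1\cap L$ nor $L_2\cap L$ can coincide with any $P_i$; together with $L_1\cap L_2=\{Q_3\}$ and $Q_3\notin L$, this shows that $P_5$ and $P_6$ are genuinely new and distinct, so $\mathbb{Y}$ and $\mathbb{Z}$ have the asserted cardinalities. Second, the inequality $\Mult_{P_5}D'\ge 7m$ uses only the two components $L_2$ and $L$; any additional incidence (for instance, should one of the conics $C_i$ happen to pass through $P_5$) can only increase the multiplicity and does not affect the bound.

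There is essentially no hard step here: the entire argument rides on the observation that the divisor built for $\mathbb{X}$ already carries enough multiplicity along the lines $L_1,L_2,L$ to absorb the two intersection points with $L$ for free. The only thing requiring genuine care is the multiplicity bookkeeping at $P_5$ and $P_6$ together with the verification that these points are new and distinct, which is exactly what the configuration hypothesis of part (c) guarantees.
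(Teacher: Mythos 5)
Your proposal is correct and follows essentially the same route as the paper: the identical monotonicity chain $\hat{\alpha}_{\mathbb{X}}\le\hat{\alpha}_{\mathbb{Y}}\le\hat{\alpha}_{\mathbb{Z}}$ via Proposition~\ref{Prop-WaldschmidtConst-7Points}(c), followed by reusing the divisor $D'=m(C_1+C_2+C_3)+2m(L_1+L_2+L_3)+5mL$ of degree $17m$ to get $\hat{\alpha}_{\mathbb{Z}}\le\frac{17}{7}$. Your explicit bookkeeping at the new points (multiplicity $2m+5m=7m$ at $P_5$ from $L_2$ and $L$, and at $P_6$ from $L_1$ and $L$) merely spells out what the paper asserts when it says $D'$ vanishes to order $7m$ at every point of $\mathbb{Z}$, so no gap remains.
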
	
\begin{proof}
	The inclusions of subschemes $\mathbb{X}\subseteq \mathbb{Y}\subseteq
	\mathbb{Z}$ and Proposition~\ref{Prop-WaldschmidtConst-7Points}(c) yield 
	$$
	\frac{17}{7}=\hat{\alpha}_\mathbb{X}\le 
	\hat{\alpha}_\mathbb{Y}\le  \hat{\alpha}_\mathbb{Z}.
	$$ 
    Moreover, let $C_1, C_2, C_3$ be the irreducible conics passing through 
	$Q_1, Q_2, Q_3, P_2, P_3;$ $Q_1, Q_2, Q_3, P_1, P_3;$ 
	and $Q_1, Q_2, Q_3, P_1, P_2,$ respectively.
	Then the divisor $D'=m(C_1+C_2+C_3)+2m(L_1+L_2+L_3)+5mL$ is of degree $17m$ 
	and vanishes to the order $7m$ at every point of $\mathbb{Z}$. 
	It follows that $\hat{\alpha}_{\mathbb{Z}}\le \frac{17}{7}.$
	Consequently, we have $\hat{\alpha}_{\mathbb{X}}=\hat{\alpha}_{\mathbb{Y}}
	=\hat{\alpha}_{\mathbb{Z}}= \frac{17}{7}$, as desired.
\end{proof}

Adding more points on the line $L$ in the configuration of part (d) of Proposition~\ref{Prop-WaldschmidtConst-7Points}, we obtain a class of configurations of $n>7$ points whose Waldschmidt constant is exactly $\dfrac{5}{2}$.

\begin{theorem}\label{Thm-WaldschmidtConst-nPoints}
	Let $\mathbb{Y}=\{P_1,\dots,P_4,Q_1,Q_2,Q_3\}$ be a set of 7 points 
	in $\mathbb{P}^2$ such that  $P_1,\dots, P_{4}$ lie on a line $L$,  
	$Q_1,Q_2,Q_3$ are out of $L$ and not collinear, and none of the points $P_1,\dots, P_{4}$ is $\mathcal{Q}$-collinear, and let 
	$\mathbb{X}$ be a set of $n> 7$ points obtained from $\mathbb{Y}$ 
	by extending $(n-7)$ additional points on $L$. Then
	$$
	\hat{\alpha}_{\mathbb{X}}=\hat{\alpha}_{\mathbb{Y}}=\frac{5}{2}.
	$$
	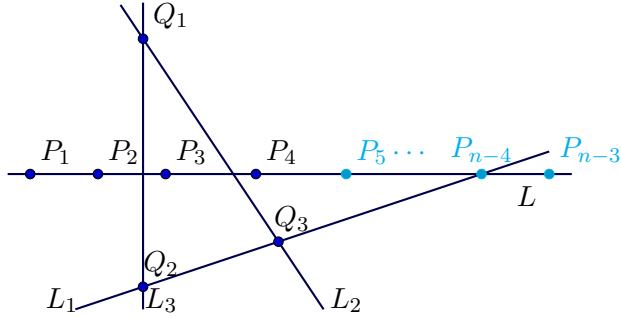
\begin{figure}[ht] 
		\begin{center}
			\begin{tikzpicture}[scale=0.6]
				\draw[-,blue!30!black,thick] (-1,0) -- (11.5,0); 
				\draw[-,blue!30!black,thick] (1/2,-3) -- (11,1/2); 
				\draw[-,blue!30!black,thick] (2,-3) -- (2,3.8); 
				\draw[-,blue!30!black,thick] (1.5,15/4) -- (6,-3);
				--
				\filldraw[color=black, fill=blue!80!black] (-0.5,0) circle (3pt)
				node[anchor=south west]{$P_1$};
				\filldraw[color=black, fill=blue!80!black] (1,0) circle (3pt)
				node[anchor=south west]{$P_2$};
				\filldraw[color=black, fill=blue!80!black] (2.5,0) circle (3pt)
				node[anchor=south west]{$P_3$};
				\filldraw[color=black, fill=blue!80!black] (4.5,0) circle (3pt)
				node[anchor=south west]{$P_4$};
				\filldraw[color=cyan, fill=cyan!80!black] (6.5,0) circle (3pt)
				node[anchor=south west]{$P_5\cdots$};
				\filldraw[color=cyan, fill=cyan!80!black] (11,0) circle (3pt)
				node[anchor=south west]{$P_{n-3}$};
				\filldraw[color=cyan, fill=cyan!80!black] (9.5,0) circle (3pt)
				node[anchor=south]{$P_{n-4}$};
				--				
				\filldraw[color=black, fill=blue!80!black] (2,3) circle (3pt)
				node[anchor=south west]{$Q_1$};
				\filldraw[color=black, fill=blue!80!black] (2,-2.5) circle (3pt);
				\filldraw[color=black, fill=blue!80!black] (5,-3/2) circle (3pt);
				--
				\node (Q2) at (2.4,-2) {$Q_2$};
				\node (Q3) at (5.3,-1) {$Q_3$};
				\node (L) at (10.5,-0.5) {$L$};
				\node (L1) at (0.2,-2.8) {$L_1$};
				\node (L3) at (2.4,-2.8) {$L_3$};
				\node (L2) at (6.5,-2.8) {$L_2$};
			\end{tikzpicture}\vspace*{-0.3cm}
		\end{center}
		\caption{$n$ points with at least four 
			non-$\mathcal{Q}$-collinear points}
	\end{figure}
\end{theorem}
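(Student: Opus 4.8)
The plan is to establish the equality $\hat{\alpha}_{\mathbb{X}} = \frac{5}{2}$ by squeezing it between two bounds, reducing everything to the already-settled seven-point case in Proposition~\ref{Prop-WaldschmidtConst-7Points}(d). The pleasant feature of this configuration is that the extremal divisor used in part (d) already charges the line $L$ with multiplicity two, so piling extra points onto $L$ costs nothing; both bounds will therefore be insensitive to the exact placement of the $n-7$ new points.

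For the lower bound I would invoke monotonicity of the Waldschmidt constant under inclusion of point sets. Since $\mathbb{Y} \subseteq \mathbb{X}$, every form vanishing to order $m$ along $\mathbb{X}$ also vanishes to order $m$ along $\mathbb{Y}$, whence $I_{\mathbb{X}}^{(m)} \subseteq I_{\mathbb{Y}}^{(m)}$ and $\alpha_{m\mathbb{X}} \geq \alpha_{m\mathbb{Y}}$ for every $m$. Dividing by $m$ and passing to the limit gives $\hat{\alpha}_{\mathbb{X}} \geq \hat{\alpha}_{\mathbb{Y}} = \frac{5}{2}$, the last equality being exactly Proposition~\ref{Prop-WaldschmidtConst-7Points}(d). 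This is the same device already used to prove Corollaries~\ref{Cor-Waldschmidt-8Points-01} and~\ref{Cor-Waldschmidt-89Points-02}.

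For the upper bound I would reuse the extremal divisor from part (d), namely $D' = m(L_1+L_2+L_3) + 2mL$, which has degree $5m$. The point is to check that $D'$ still vanishes to order at least $2m$ at every point of the enlarged set $\mathbb{X}$. At $Q_1,Q_2,Q_3$ and at $P_1,\dots,P_4$ this is inherited unchanged from the seven-point computation: each $Q_i$ lies on exactly two of the lines $L_1,L_2,L_3$ and off $L$, while each $P_i$ lies on $L$ and, by hypothesis, off all three $L_j$, so the vanishing order is $2m$ in both cases. At each of the new points $P_5,\dots,P_{n-3}$, which lie on $L$, the summand $2mL$ alone already forces multiplicity $\geq 2m$; should one of them happen to sit at an intersection $L\cap L_j$, the multiplicity only increases to $3m$, so in every case $D' \in I_{\mathbb{X}}^{(2m)}$. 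Hence $\alpha_{2m\mathbb{X}} \leq 5m$ for all $m$, giving $\hat{\alpha}_{\mathbb{X}} \leq \frac{5}{2}$.

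Combining the two bounds yields $\hat{\alpha}_{\mathbb{X}} = \frac{5}{2} = \hat{\alpha}_{\mathbb{Y}}$. I do not expect a genuine obstacle here: the only thing to watch is the multiplicity bookkeeping at the added points, and the fact that $L$ enters $D'$ with multiplicity two makes that bookkeeping automatic. In particular, no new B\'ezout estimate (Proposition~\ref{bezout decomposition}) is required, since the delicate lower-bound inequality has already been absorbed into the appeal to Proposition~\ref{Prop-WaldschmidtConst-7Points}(d).
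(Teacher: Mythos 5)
Your proposal is correct and follows essentially the same route as the paper: the lower bound via monotonicity $\mathbb{Y}\subseteq\mathbb{X}$ together with Proposition~\ref{Prop-WaldschmidtConst-7Points}(d), and the upper bound via the very same divisor $m(L_1+L_2+L_3)+2mL$ of degree $5m$ vanishing to order $2m$ along $\mathbb{X}$. Your extra bookkeeping at the added points (noting that the summand $2mL$ alone handles them, regardless of incidences with the $L_j$) is a slightly more careful version of what the paper asserts without comment.
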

\begin{proof}
Since $\mathbb{Y}$ is a subset of $\mathbb{X}$, it follows that
$\hat{\alpha}_{\mathbb{Y}}\le \hat{\alpha}_{\mathbb{X}}$.
By Proposition~\ref{Prop-WaldschmidtConst-7Points}(d), we have
$$
\frac{5}{2}=\hat{\alpha}_{\mathbb{Y}}\le \hat{\alpha}_{\mathbb{X}}.
$$
On the other hand, we consider the divisor $D = m(L_1+L_2+L_3)+2mL$.
Then $D$ is of degree $5m$ and vanishes to order $2m$ 
at every point of $\mathbb{X}$, and subsequently
$\hat{\alpha}_{\mathbb{X}}\le \frac{5}{2}.$ 
Therefore, we get $\hat{\alpha}_{\mathbb{X}}
=\hat{\alpha}_{\mathbb{Y}}=\frac{5}{2}.$
\end{proof}

The classification of all configurations of a set of $n$ points $\mathbb{X}$ 
in $\mathbb{P}^2$ whose  Waldschmidt constant satisfying
$\hat{\alpha}_{\mathbb{X}}\le 2$ is given by Main Theorem 
of~\cite{Mosakhani-Haghighi2016}. 
Combining the above results, the values of Waldschmidt constant of 
a set of $n\ge 7$ points, which has at least $n-3$ collinear points 
is either $1, \frac{2n-3}{n-1}, 2, \frac{16}{7}, \frac{7}{3}, \frac{17}{7}$ or $\frac{5}{2}.$
The following table shows us the relation between 
the configuration of points, the Hilbert polynomial 
and the Waldschmidt constant.

\noindent \renewcommand{\arraystretch}{1.5}
\begin{longtable}{|>{\centering\arraybackslash}p{0.2\textwidth} >{\centering\arraybackslash}p{0.2\textwidth} >{\centering\arraybackslash}p{0.53\textwidth}|}
	\hline
	\textbf{Waldschmidt Constant} & \textbf{Hilbert Polynomial} & \textbf{Configuration of Points} \\
	\hline
	1 & $n$ &  all $n$ points lie on a line \\ \cdashline{1-3}[2pt/3pt]
	$\frac{2n-3}{n-1}$ & $n$ &  exactly $n-1$ points are collinear 
	\\ \cdashline{1-3}[2pt/3pt]
	2 & $n$ &  exactly $n-2$ points are collinear 
	or exactly $n-3$ points lie on a line and 3 points are collinear and
	out of that line \\ \cdashline{1-3}[2pt/3pt] 
	$\frac{16}{7}$ & $7$ &  
	\begin{tikzpicture}[scale=0.3]
		\draw[-,blue!30!black,thick] (-1,0) -- (7.5,0);
		\draw[-,blue!30!black,thick] (0,-1) -- (0,5);
		\draw[-,blue!30!black,thick] (-0.5,29/6) -- (3,-1);
		\draw[-,cyan!90!black,thick] (-1,7/3) -- (7.5,-0.5);
		\filldraw[color=black, fill=blue!80!black] (0,0) circle (5pt)
		node[anchor=south west]{};
		\filldraw[color=black, fill=blue!80!black] (2.4,0) circle (5pt)
		node[anchor=south west]{};
		\filldraw[color=black, fill=blue!80!black] (0,4) circle (5pt)
		node[anchor=south west]{};
		\filldraw[color=black, fill=blue!80!black] (6,0) circle (5pt)
		node[anchor=south west]{};
		\filldraw[color=black, fill=blue!80!black] (0,2) circle (5pt)
		node[anchor=south west]{};
		\filldraw[color=black, fill=blue!80!black] (1.5,1.5) circle (5pt)
		node[anchor=south west]{};
		\filldraw[color=black, fill=blue!80!black] (4.5,0.5) circle (5pt)
		node[anchor=south west]{};
	\end{tikzpicture}
	\\ \cdashline{1-3}[2pt/3pt]
	$\frac{7}{3}$ & $7$\quad or\quad $8$ &  
	\begin{tikzpicture}[scale=0.3]
		\draw[-,cyan!90!black,thick] (-1,0) -- (11,0); 
		\draw[-,blue!30!black,thick] (1/2,-3) -- (11,1/2); 
		\draw[-,blue!30!black,thick] (2,-3) -- (2,3.8); 
		\draw[-,blue!30!black,thick] (1.5,15/4) -- (6,-3);
		--
		\filldraw[color=black, fill=blue!80!black] (0,0) circle (5pt)
		node[anchor=south west]{};
		\filldraw[color=black, fill=blue!80!black] (2,0) circle (5pt)
		node[anchor=south west]{};
		\filldraw[color=black, fill=blue!80!black] (4,0) circle (5pt)
		node[anchor=south west]{};
		\filldraw[color=black, fill=blue!80!black] (6,0) circle (5pt)
		node[anchor=south west]{};
		--				
		\filldraw[color=black, fill=blue!80!black] (2,3) circle (5pt)
		node[anchor=south west]{};
		\filldraw[color=black, fill=blue!80!black] (2,-2.5) circle (5pt);
		\filldraw[color=black, fill=blue!80!black] (5,-3/2) circle (5pt);
	\end{tikzpicture}\quad\mbox{or}\quad
	\begin{tikzpicture}[scale=0.3]
		\draw[-,cyan!90!black,thick] (-1,0) -- (11,0); 
		\draw[-,blue!30!black,thick] (1/2,-3) -- (11,1/2); 
		\draw[-,blue!30!black,thick] (2,-3) -- (2,3.8); 
		\draw[-,blue!30!black,thick] (1.5,15/4) -- (6,-3);
		--
		\filldraw[color=black, fill=blue!80!black] (0,0) circle (5pt)
		node[anchor=south west]{};
		\filldraw[color=black, fill=blue!80!black] (2,0) circle (5pt)
		node[anchor=south west]{};
		\filldraw[color=black, fill=blue!80!black] (4,0) circle (5pt)
		node[anchor=south west]{};
		\filldraw[color=black, fill=blue!80!black] (6,0) circle (5pt)
		node[anchor=south west]{};
		\filldraw[color=black, fill=blue!80!black] (9.5,0) circle (5pt)
		node[anchor=south]{};
		--				
		\filldraw[color=black, fill=blue!80!black] (2,3) circle (5pt)
		node[anchor=south west]{};
		\filldraw[color=black, fill=blue!80!black] (2,-2.5) circle (5pt);
		\filldraw[color=black, fill=blue!80!black] (5,-3/2) circle (5pt);
	\end{tikzpicture} 
	\\ \cdashline{1-3}[2pt/3pt]
	$\frac{17}{7}$ & $7$\quad or\quad $8$\quad or\quad $9$ &  
	\begin{tikzpicture}[scale=0.3]
		\draw[-,cyan!90!black,thick] (-1,0) -- (11,0); 
		\draw[-,blue!30!black,thick] (1/2,-3) -- (11,1/2); 
		\draw[-,blue!30!black,thick] (2,-3) -- (2,3.8); 
		\draw[-,blue!30!black,thick] (1.5,15/4) -- (6,-3);
		--
		\filldraw[color=black, fill=blue!80!black] (0,0) circle (5pt)
		node[anchor=south west]{};
		\filldraw[color=black, fill=blue!80!black] (2,0) circle (5pt)
		node[anchor=south west]{};
		\filldraw[color=black, fill=blue!80!black] (5,0) circle (5pt)
		node[anchor=south west]{};
		\filldraw[color=black, fill=blue!80!black] (7,0) circle (5pt)
		node[anchor=south west]{};
		--				
		\filldraw[color=black, fill=blue!80!black] (2,3) circle (5pt)
		node[anchor=south west]{};
		\filldraw[color=black, fill=blue!80!black] (2,-2.5) circle (5pt);
		\filldraw[color=black, fill=blue!80!black] (5,-3/2) circle (5pt);
	\end{tikzpicture} \quad\mbox{or}\quad
	\begin{tikzpicture}[scale=0.3]
		\draw[-,cyan!90!black,thick] (-1,0) -- (11,0); 
		\draw[-,blue!30!black,thick] (1/2,-3) -- (11,1/2); 
		\draw[-,blue!30!black,thick] (2,-3) -- (2,3.8); 
		\draw[-,blue!30!black,thick] (1.5,15/4) -- (6,-3);
		--
		\filldraw[color=black, fill=blue!80!black] (0,0) circle (5pt)
		node[anchor=south west]{};
		\filldraw[color=black, fill=blue!80!black] (2,0) circle (5pt)
		node[anchor=south west]{};
		\filldraw[color=black, fill=blue!80!black] (4,0) circle (5pt)
		node[anchor=south west]{};
		\filldraw[color=black, fill=blue!80!black] (5,0) circle (5pt)
		node[anchor=south west]{};
		\filldraw[color=black, fill=blue!80!black] (7,0) circle (5pt)
		node[anchor=south west]{};
		--				
		\filldraw[color=black, fill=blue!80!black] (2,3) circle (5pt)
		node[anchor=south west]{};
		\filldraw[color=black, fill=blue!80!black] (2,-2.5) circle (5pt);
		\filldraw[color=black, fill=blue!80!black] (5,-3/2) circle (5pt);
	\end{tikzpicture}
	\\
	&   &  
	\ \mbox{or} \ 
	\begin{tikzpicture}[scale=0.3]
		\draw[-,cyan!90!black,thick] (-1,0) -- (11,0); 
		\draw[-,blue!30!black,thick] (1/2,-3) -- (11,1/2); 
		\draw[-,blue!30!black,thick] (2,-3) -- (2,3.8); 
		\draw[-,blue!30!black,thick] (1.5,15/4) -- (6,-3);
		--
		\filldraw[color=black, fill=blue!80!black] (0,0) circle (5pt)
		node[anchor=south west]{};
		\filldraw[color=black, fill=blue!80!black] (2,0) circle (5pt)
		node[anchor=south west]{};
		\filldraw[color=black, fill=blue!80!black] (4,0) circle (5pt)
		node[anchor=south west]{};
		\filldraw[color=black, fill=blue!80!black] (5,0) circle (5pt)
		node[anchor=south west]{};
		\filldraw[color=black, fill=blue!80!black] (7,0) circle (5pt)
		node[anchor=south west]{};
		\filldraw[color=black, fill=blue!80!black] (9.5,0) circle (5pt)
		node[anchor=south]{};
		--				
		\filldraw[color=black, fill=blue!80!black] (2,3) circle (5pt)
		node[anchor=south west]{};
		\filldraw[color=black, fill=blue!80!black] (2,-2.5) circle (5pt);
		\filldraw[color=black, fill=blue!80!black] (5,-3/2) circle (5pt);
	\end{tikzpicture} 
	\\ \cdashline{1-3}[2pt/3pt]
	$\frac{5}{2}$ & $n$ &  exactly $n-3$ points $P_1,\dots,P_{n-3}$ lie on a line
	and 3 points $Q_1, Q_2, Q_3$ are non-collinear and out of that line and 
	at least 4 points of the $P_i$ are not $\mathcal{Q}$-collinear 
	
	\begin{tikzpicture}[scale=0.35]
		\draw[-,cyan!90!black,thick] (-1,0) -- (11.5,0); 
		\draw[-,blue!30!black,thick] (1/2,-3) -- (11,1/2); 
		\draw[-,blue!30!black,thick] (2,-3) -- (2,3.8); 
		\draw[-,blue!30!black,thick] (1.5,15/4) -- (6,-3);
		--
		\filldraw[color=black, fill=blue!80!black] (0,0) circle (5pt)
		node[anchor=south west]{};
		\filldraw[color=black, fill=blue!80!black] (2,0) circle (5pt)
		node[anchor=south west]{};
		\filldraw[color=black, fill=blue!80!black] (3,0) circle (5pt)
		node[anchor=south west]{};
		\filldraw[color=black, fill=blue!80!black] (5,0) circle (5pt)
		node[anchor=south west]{};
		\filldraw[color=black, fill=blue!80!black] (7,0) circle (5pt)
		node[anchor=south west]{$\cdots$};
		\filldraw[color=black, fill=blue!80!black] (11,0) circle (5pt)
		node[anchor=south west]{};
		--				
		\filldraw[color=black, fill=blue!80!black] (2,3) circle (5pt)
		node[anchor=south west]{};
		\filldraw[color=black, fill=blue!80!black] (2,-2.5) circle (5pt);
		\filldraw[color=black, fill=blue!80!black] (5,-3/2) circle (5pt);
	\end{tikzpicture}
	\\ 
	\hline
\end{longtable}

%
%
\goodbreak 
\section{Waldschmidt constant for sets of $n$ points with $n-1$ points on an irreducible conic}
In this section, we describe all configurations $\mathbb{X}$ of $n$ points with $n-1$ points among them lying on an irreducible conic such that  $\hat{\alpha}_{\mathbb{X}}\leq\dfrac{5}{2}$.

\subsection{Six points on an irreducible conic and one external point}\

In $\mathbb{P}^2$, let $C$ be an irreducible conic, let $P_1,\dots,P_6$
be points on $C$, and let $Q\notin C.$ Our aim is to determine the 
Waldschmidt constant $\hat{\alpha}_{\mathbb{X}}$ of the set of seven points 
$\mathbb{X}=\{P_1, P_2, \dots, P_6, Q\}$. 

An upper bound of $\hat{\alpha}_{\mathbb{X}}$ is given by the following lemma. 

\begin{lemma}\label{Sec3-lem}
In the setting above, we have $\hat{\alpha}_{\mathbb{X}} \le \frac{5}{2}$.
\end{lemma}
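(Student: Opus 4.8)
The plan is to produce, for a single convenient value of $m$, an effective divisor of degree $\tfrac{5}{2}m$ vanishing to order $m$ at every point of $\mathbb{X}$; since $\hat{\alpha}_{\mathbb{X}}=\inf_{m\ge 1}\alpha_{m\mathbb{X}}/m$ (as recalled in the introduction), exhibiting one such divisor already forces $\hat{\alpha}_{\mathbb{X}}\le\tfrac{5}{2}$. The smallest useful case is $m=2$, where the task reduces to finding a quintic vanishing to order $2$ at each of the seven points. A naive parameter count is not decisive here: quintics form a $21$-dimensional space, and imposing multiplicity $\ge 2$ at seven points costs $7\cdot 3=21$ conditions, so the expected dimension is $0$ and existence is not guaranteed by counting alone. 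This is precisely where the hypothesis that $P_1,\dots,P_6$ lie on the conic $C$ must enter.

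The key step is to split the desired quintic as $D=C+E$, where $E$ is a cubic passing through $P_1,\dots,P_6$ and singular at $Q$. Granting such an $E$, the divisor $D$ is effective of degree $2+3=5$; at each $P_i$ the conic contributes multiplicity $1$ and $E$ contributes multiplicity $1$, while at $Q$ the conic contributes $0$ and $E$ contributes $2$. Thus $D$ vanishes to order $2$ at all seven points, so $I_{\mathbb{X}}^{(2)}$ is nonzero in degree $5$, whence $\alpha_{2\mathbb{X}}\le 5$ and $\hat{\alpha}_{\mathbb{X}}\le \tfrac{\alpha_{2\mathbb{X}}}{2}\le \tfrac{5}{2}$.

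It remains to guarantee the existence of $E$, and here the dimension count does close. Cubics form a $10$-dimensional vector space; requiring multiplicity $\ge 2$ at $Q$ imposes $3$ linear conditions, leaving a space of dimension $\ge 7$, and requiring passage through the six points $P_1,\dots,P_6$ imposes six further linear conditions, so the solution space has dimension at least $7-6=1>0$. Hence a nonzero cubic $E$ with the required behaviour always exists, independently of the positions of the $P_i$ on $C$ and of $Q$ off $C$. I expect this counting to be the crux of the argument: the naive count for quintics fails, but pinning the singular point at $Q$ and absorbing the six simple conditions into the smaller family of cubics leaves positive-dimensional freedom, and it is exactly the special position of the six points on $C$ that makes the factored divisor $C+E$ available.

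A minor point worth recording, although it is not needed for the upper bound, is that $E$ cannot contain $C$ as a component: a union $C\cup\ell$ with $\ell$ a line is singular only along $C\cap\ell\subset C$, whereas $Q\notin C$, so the singularity of $E$ at $Q$ forces $E$ to meet $C$ properly. In any case the argument only uses that $C\cdot E$ is an effective quintic with the prescribed multiplicities, so the conclusion $\hat{\alpha}_{\mathbb{X}}\le\tfrac{5}{2}$ follows.
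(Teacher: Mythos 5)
Your proof is correct and is essentially the paper's own argument: the paper likewise produces a cubic $\mathfrak{C}$ through $P_1,\dots,P_6$ with a double point at $Q$ (via the same count of $9$ homogeneous linear conditions on the $10$ coefficients, using Euler's formula and $\charac\K=0$) and then uses the divisor $\mathfrak{C}+C$ of degree $5$ with multiplicity $2$ along $\mathbb{X}$. The only cosmetic difference is that the paper takes $m(\mathfrak{C}+C)$ for arbitrary $m$ while you fix $m=2$ and invoke $\hat{\alpha}_{\mathbb{X}}=\inf_m \alpha_{m\mathbb{X}}/m$, which is the same estimate.
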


\begin{proof}
First of all we will prove that there exists a cubic $\mathfrak{C}$ 
passing through $P_1, P_2, \dots, P_6$ of multiplicity 1 and $Q$ 
of multiplicities 2. Indeed, let $\mathfrak{C}$ be a cubic of the form 
$$
\mathfrak{C}:
a_0x_0^3+a_1x_0^2x_1+ a_2x_0^2x_2+a_3x_0x_1^2+a_4x_0x_1x_2
+ a_5x_0x_2^2 +a_6x_1^3+a_7x_1^2x_2 +a_8x_1x_2^2+ a_9x_2^3=0,
$$
where $a_0,\dots,a_9\in \K$.
Then $P_i\in \mathfrak{C}$ if and only if $\mathfrak{C}(P_i)=0.$ 
Moreover, using Euler formula with a remark that $\charac(\K)=0$, 
 the point $Q\in \mathfrak{C}$ of multiplicity 2 if and only if 
$$
\frac{\partial \mathfrak{C}}{\partial x_0}(Q)
=\frac{\partial \mathfrak{C}}{\partial x_1}(Q)
=\frac{\partial \mathfrak{C}}{\partial x_2}(Q)=0.
$$
A homogeneous system of $9$ equations in $10$ variables always has 
a non-trivial solution. Hence, there is a cubic $\mathfrak{C}$ 
passing through $P_1, \dots, P_6$ of multiplicity $1$ 
and $Q$ of multiplicity 2.

For the irreducible conic $C$ passing through $P_1, \dots, P_6$, 
the divisor $m(\mathfrak{C}+C)$ is of degree $5m$ and vanishes to order $2m$ 
at every point of $\mathbb{X}.$ 
Therefore, we get $\hat{\alpha}_{\mathbb{X}}\le \frac{5}{2}.$
\end{proof}

In order to determine the Waldschmidt constant 
$\hat{\alpha}_{\mathbb{X}}$, we look closely at the 
following configurations of the set of points 
$\mathbb{X}=\{P_1,\dots,P_6,Q\}$:

\noindent \textbf{Type 1:}\ The external point $Q$ is an intersection of $3$ 
concurrent lines from pairs of points in $\{P_1,\dots,P_6\}$.

\noindent \textbf{Type 2:}\  Any three lines from pairs of points 
in $\{P_1,\dots,P_6\}$ are not concurrent or $Q$ is not the intersection point 
of any 3 concurrent lines from pairs of points in $\{P_1,\dots,P_6\}$. 

\begin{proposition}
\label{Prop 3.2 type 1 type 2}
With assumption as in Lemma \ref{Sec3-lem}, let $\hat{\alpha}_{\mathbb{X}}$
be the Waldschmidt constant of $\mathbb{X}$.
\begin{enumerate}
	\item[(a)] 
	$\hat{\alpha}_{\mathbb{X}}=\frac{7}{3}$ if and only if $\mathbb{X}$ is of Type 1.
	\item[(b)]  
	$\hat{\alpha}_{\mathbb{X}}=\frac{5}{2}$ if and only if $\mathbb{X}$ is of Type 2.
\end{enumerate}
\end{proposition}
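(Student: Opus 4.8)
The plan is to prove the two implications ``$\mathbb{X}$ is of Type~1 $\Rightarrow \hat{\alpha}_{\mathbb{X}}=\tfrac{7}{3}$'' and ``$\mathbb{X}$ is of Type~2 $\Rightarrow \hat{\alpha}_{\mathbb{X}}=\tfrac{5}{2}$''. Since Types~1 and~2 are mutually exclusive and together exhaust all configurations, and since $\tfrac{7}{3}\neq\tfrac{5}{2}$, both biconditionals follow immediately: if $\hat{\alpha}_{\mathbb{X}}=\tfrac{7}{3}$ then $\mathbb{X}$ cannot be of Type~2 (which would force $\tfrac{5}{2}$), hence is of Type~1, and symmetrically for part~(b). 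So the entire content lies in the two value computations, each split into an upper and a lower bound obtained through a B\'ezout decomposition and a positive linear combination of the resulting inequalities.

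For Type~1, write $M_1,M_2,M_3$ for the three concurrent lines through $Q$, each joining a pair of the $P_i$. The upper bound comes from the divisor $D'=m(M_1+M_2+M_3)+2mC$, which has degree $7m$ and multiplicity $3m$ at every point of $\mathbb{X}$, giving $\hat{\alpha}_{\mathbb{X}}\le\tfrac{7}{3}$. For the reverse inequality I would take a divisor $D$ of degree $d$ vanishing to order $m$ along $\mathbb{X}$ and form its B\'ezout decomposition $D=a(M_1+M_2+M_3)+bC+B(D)$ with respect to the irreducible curves $M_1,M_2,M_3,C$. Applying Proposition~\ref{bezout decomposition} to $B(D)$ against each $M_j$ (all three giving the same inequality, since each line contains two of the points and passes through $Q$) and against $C$, together with $\deg B(D)\ge 0$, yields $d+2a\ge 3m$, $d+b\ge 3m$ and $d\ge 3a+2b$; combining these with weights $3$, $4$ and $2$ respectively gives $9d\ge 21m$, i.e.\ $\hat{\alpha}_{\mathbb{X}}\ge\tfrac{7}{3}$.

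For Type~2 the upper bound $\hat{\alpha}_{\mathbb{X}}\le\tfrac{5}{2}$ is precisely Lemma~\ref{Sec3-lem}. For the lower bound, by the construction in that lemma there is a cubic $\mathfrak{C}$ through $P_1,\dots,P_6$ with a double point at $Q$. Since $Q\notin C$ and $\mathfrak{C}$ cannot contain the irreducible conic $C$, the curve $\mathfrak{C}$ meets $C$ exactly in $P_1,\dots,P_6$; as a line meets $C$ in at most $2$ points and an irreducible conic other than $C$ meets it in at most $4$, decomposing $\mathfrak{C}$ into irreducible components leaves only three possibilities: (1)~$\mathfrak{C}$ is irreducible; (2)~$\mathfrak{C}$ is a union of three lines forming a perfect matching of the six points; or (3)~$\mathfrak{C}=\ell\cup C'$ with $\ell$ a line through two of the points and $C'$ an irreducible conic $C'\neq C$ through the other four. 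In cases (1) and (3) I would run a B\'ezout decomposition of $D$ against $C$ together with the components of $\mathfrak{C}$, noting that in case (3) both $\ell$ and $C'$ must pass through $Q$ in order to create the double point. A suitable positive combination then gives $12d\ge 30m$ in case~(1) (using $C$ and $\mathfrak{C}$ with weights $2,3$ alongside $\deg B(D)\ge0$) and $16d\ge 40m$ in case~(3) (using $\ell,C',C$ with weights $3,5,2$ and $\deg B(D)\ge0$ with weight $1$); each yields $d\ge\tfrac{5}{2}m$.

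The main obstacle is case~(2), and this is exactly where the Type~2 hypothesis is used. The three matching lines produce multiplicity at $Q$ equal to the number of them passing through $Q$, and a double point requires at least two; the Type~2 assumption---that $Q$ is \emph{not} the common intersection of three such concurrent lines---forces exactly two of $M_1,M_2,M_3$ to pass through $Q$. The remaining step is to check that the B\'ezout decomposition of $D$ against $M_1,M_2,M_3$ produces inequalities whose combination (weights $1,1$ on the two lines through $Q$ and $2$ on the third) gives $4d\ge 10m$, again $d\ge\tfrac{5}{2}m$. The delicate point, which I would treat with care, is the clean organization of this component analysis: verifying that the enumeration of irreducible-component types of a nodal cubic through six points of an irreducible conic is complete, and confirming that the ``three concurrent lines through $Q$'' degeneracy is the \emph{unique} obstruction responsible for the drop from $\tfrac{5}{2}$ to $\tfrac{7}{3}$ (in particular, that a line-plus-irreducible-conic member never coexists with the Type~1 geometry, since there the relevant conic is forced to split into two lines).
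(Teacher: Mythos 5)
Your proposal is correct, and its overall logic is the paper's: compute the constant in each of the two mutually exclusive, exhaustive types, so both biconditionals follow from $\tfrac{7}{3}\neq\tfrac{5}{2}$. Part (a) coincides with the paper's proof --- same upper-bound divisor $m(L_1+L_2+L_3)+2mC$, same decomposition $D=p(L_1+L_2+L_3)+qC+B(D)$, and your weights are exactly the paper's combination yielding $9d\ge 21m$. Part (b) is where you genuinely diverge: the paper splits Type 2 by how many lines $P_iP_j$ pass through $Q$ (none, one, or two) and uses configuration-specific auxiliary curves in each subcase (three five-point conics through $Q$; a line, a five-point conic and $C$; the two lines through $Q$ plus $P_5Q$, $P_6Q$ and $C$), whereas you classify the components of the singular cubic $\mathfrak{C}$ of Lemma~\ref{Sec3-lem} (irreducible; line plus irreducible conic, both through $Q$; three matching lines, exactly two through $Q$ by the Type 2 hypothesis) and run one decomposition per case. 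I verified your combinations: $12d\ge 30m$, $16d\ge 40m$ and $4d\ge 10m$ all check out, so each case gives $d\ge\tfrac{5}{2}m$. Your route is more uniform, reusing the Lemma~\ref{Sec3-lem} cubic for the lower bound, and your case of three lines needs no conic at all; the price is the component classification, whose small gaps you should state explicitly: $C$ cannot be a component of $\mathfrak{C}$ (the residual line would need multiplicity $2$ at $Q$), multiple lines cover too few of the six points, and in the line-plus-conic case $C'$ meets $C$ in exactly four of the $P_i$ since $C'\cdot C=4$. The ``delicate point'' you flag at the end is unnecessary: your three cases exhaust all admissible cubics and Type 2 is invoked only in the three-lines case, so no coexistence analysis with Type 1 geometry is needed.
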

\begin{proof}
Firstly, we will compute the Waldschmidt constant of $\mathbb{X}$ of Type 1
(see Figure \ref{Figure009}). 
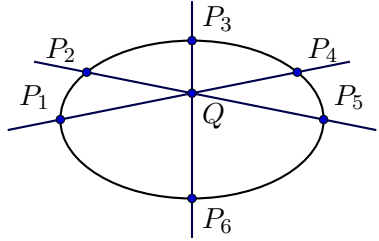
\begin{figure}[ht] 
	\begin{center}
		\begin{tikzpicture}[scale=0.35]
			\draw[-,blue!30!black,thick] (6,11/5) -- (-7,-2/5); 
			\draw[-,blue!30!black,thick] (-6,11/5) -- (7,-2/5); 
			\draw[-,blue!30!black,thick] (0,4.5) -- (0,-4.5); 
			--
			\draw[style=thick] (0,0) 	ellipse (5 and 3);
			--				
			\filldraw[color=black, fill=blue!80!black] (-5,0) circle (4.5pt)
			node[anchor=south east]{$P_1$};
			\filldraw[color=black, fill=blue!80!black] (4,9/5) circle (4.5pt)
			node[anchor=south west]{$P_4$};
			--
			\filldraw[color=black, fill=blue!80!black] (-4,9/5) circle (4.5pt)
			node[anchor=south east]{$P_2$};
			\filldraw[color=black, fill=blue!80!black] (5,0) circle (4.5pt)
			node[anchor=south west]{$P_5$};
			--
			\filldraw[color=black, fill=blue!80!black] (0,3) circle (4.5pt)
			node[anchor=south west]{$P_3$};
			\filldraw[color=black, fill=blue!80!black] (0,-3) circle (4.5pt)
			node[anchor=north west]{$P_6$};
			--
			\filldraw[color=black, fill=blue!80!black] (0,1) circle (4.5pt)
			node[anchor=north west]{$Q$};
		\end{tikzpicture}\vspace*{-0.5cm}
	\end{center}
	\caption{Six points on an irreducible conic and an external point - Type 1
		\label{Figure009}}
\end{figure}
Without lost of generally, we may assume that $Q=L_1\cap L_2\cap L_3$,
where $L_1=P_1P_4,  L_2=P_2P_5, L_3=P_3P_6$. Recalling that $C$ is the conic 
passing through $P_1, P_2, \dots, P_6.$ The divisor
$$D'=m(L_1+L_2+L_3)+2mC$$
is of degree $7m$ and contains $P_1, P_2, \dots, P_6, Q$ 
of multiplicity $3m.$ Thus we have 
$\hat{\alpha}_{\mathbb{X}}\le \frac{7}{3}.$

Now let $D$ be a divisor of degree $d$ containing $m\mathbb{X}$, and let 
$$
D \;=\; p(L_1+L_2+L_3)+qC+B(D)
$$ 
be the B\'{e}zout decomposition of $D$ with respect to $L_1, L_2, L_3$ and $C$. 
Applying Proposition~\ref{bezout decomposition} for $B(D)$ and $L_1, C$, we have
$$
\begin{aligned}
	B(D)\cdot L_1&= d-3p-2q \ge 2(m-p-q)+(m-3p),\\	
	B(D)\cdot C& =2(d-3p-2q) \ge 6(m-p-q).\\
\end{aligned}
$$
Together with $B(D)\ge 0$, we get a system of inequations:
\begin{align}
	d & \ge 3p+2q,  \quad \label{eq18}\\
	d &\ge 3m-2p,  \quad \label{eq19}\\	
	d & \ge 3m-q.  \quad \label{eq20}
\end{align}
Multiplying the inequations \eqref{eq18}, \eqref{eq19} and \eqref{eq20}
with respect to $2, 3$ and $4$ and summing them up yields
$d\ge \frac{7}{3}m$. 
Hence we have shown that $\hat{\alpha}_{\mathbb{X}}=\frac{7}{3}.$

Next, we compute $\hat{\alpha}_{\mathbb{X}}$ when $\mathbb{X}$ is of Type 2.
For this end, we distinguish the following subcases. 

\smallskip\noindent
\underline{\textbf{Subcase 1:}}\ 
\textit{None of the lines $P_iP_j$ contains $Q$ 
	(see Figure \ref{Figure010-1}).} \
\begin{figure}[ht] 
	\begin{center}
		\begin{tikzpicture}[scale=0.35]
			\draw[-,blue!30!black,thick] (-5,-18/5) -- (6,3); 
			\draw[-,blue!30!black,thick] (-6,11/5) -- (7,-2/5); 
			\draw[-,blue!30!black,thick] (0,4.2) -- (0,-4.2); 
			--
			\draw[style=thick] (0,0) 	ellipse (5 and 3);
			--
			\filldraw[color=black, fill=blue!80!black] (-4,9/5) circle (4.5pt)
			node[anchor=south east]{$P_1$};
			\filldraw[color=black, fill=blue!80!black] (5,0) circle (4.5pt)
			node[anchor=south west]{$P_2$};
			--
			\filldraw[color=black, fill=blue!80!black] (0,3) circle (4.5pt)
			node[anchor=south west]{$P_3$};
			\filldraw[color=black, fill=blue!80!black] (0,-3) circle (4.5pt)
			node[anchor=north west]{$P_4$};
			--				
			\filldraw[color=black, fill=blue!80!black] (-3,-12/5) circle (4.5pt)
			node[anchor=north ]{$P_5$};
			\filldraw[color=black, fill=blue!80!black] (4,9/5) circle (4.5pt)
			node[anchor=south]{$P_6$};
			--
			\filldraw[color=black, fill=blue!80!black] (-2.5,-1) circle (4.5pt)
			node[anchor=south west]{$Q$};
		\end{tikzpicture}\vspace*{-0.3cm}
	\end{center}
	\caption{Six points on an irreducible conic and an external point - Type 2(i)
		\label{Figure010-1}}
\end{figure}
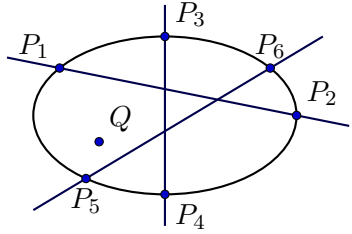
Let $C_1, C_2, C_3$ be the conics passing through $\{P_1, P_2, P_3, P_4, Q\},$ 
$\{P_3, P_4, P_5, P_6, Q\}$ and $\{P_1, P_2, P_5, P_6, Q\}$, respectively. 
Let $D=k(C_1+C_2+C_3)+pC+B(D)$ be the B\'{e}zout decomposition of a divisor $D$
of degree $d$ containing $m\mathbb{X}$. 
Applying Proposition~\ref{bezout decomposition} for $B(D)$ and $C_1$ we have
$$
B(D)\cdot C_1 =2(d-6k-2p) \ge 4(m-2k-p)+(m-3k).
$$
This induces an inequation $2d\ge 5m+k,$ and hence 
$\hat{\alpha}_{\mathbb{X}}\ge \frac{5}{2}.$ 
Thank to Lemma \ref{Sec3-lem}, the Waldschmidt constant 
of $\mathbb{X}$ is $\hat{\alpha}_{\mathbb{X}}= \frac{5}{2}.$ 

\smallskip\noindent
\underline{\textbf{Subcase 2:}}\  
\textit{There is only one line $P_iP_j$ containing $Q$ 
	(see Figure \ref{Figure010-2}).}\ 
\begin{figure}[ht] 
	\begin{center}
		\begin{tikzpicture}[scale=0.35]
			\draw[-,blue!30!black,thick] (-5,-18/5) -- (6,3); 
			\draw[-,blue!30!black,thick] (-6,11/5) -- (7,-2/5); 
			\draw[-,blue!30!black,thick] (0,4.2) -- (0,-4.2); 
			--
			\draw[style=thick] (0,0) 	ellipse (5 and 3);
			--
			\filldraw[color=black, fill=blue!80!black] (-4,9/5) circle (4.5pt)
			node[anchor=south east]{$P_1$};
			\filldraw[color=black, fill=blue!80!black] (5,0) circle (4.5pt)
			node[anchor=south west]{$P_2$};
			--
			\filldraw[color=black, fill=blue!80!black] (0,3) circle (4.5pt)
			node[anchor=south west]{$P_3$};
			\filldraw[color=black, fill=blue!80!black] (0,-3) circle (4.5pt)
			node[anchor=north west]{$P_4$};
			--				
			\filldraw[color=black, fill=blue!80!black] (-3,-12/5) circle (4.5pt)
			node[anchor=north ]{$P_5$};
			\filldraw[color=black, fill=blue!80!black] (4,9/5) circle (4.5pt)
			node[anchor=south]{$P_6$};
			--
			\filldraw[color=black, fill=blue!80!black] (-2,7/5) circle (4.5pt)
			node[anchor=south west]{$Q$};
		\end{tikzpicture}\vspace*{-0.3cm}
	\end{center}
	\caption{Six points on an irreducible conic and an external point - Type 2(ii)
		\label{Figure010-2}}
\end{figure}
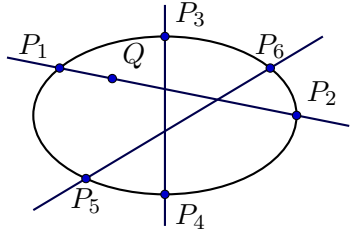
Without loss of generality, assume that $Q\in P_1P_2$.
Let $C_1$ be the irreducible conic defined by $\{P_3, P_4, P_5, P_6, Q\},$ 
let $L$ be the line  $P_1P_2$. Let $D$ be a divisor of degree $d$ containing $m\mathbb{X}$ and let 
$$
D=kL+pC_1+qC+B(D)
$$ 
be the B\'ezout decomposition of $D$ with respect to $L, C_1$ and $C$.
Applying Proposition~\ref{bezout decomposition} for $B(D)$ and $L, C_1, C$ we have
$$
\begin{aligned}
B(D)\cdot L & = d-k-2p-2q  \ge 2(m-k-q)+(m-k-p),\\
B(D)\cdot C_1 & = 2(d-k-2p-2q) \ge 4(m-p-q)+(m-k-p),\\
B(D)\cdot C & = 2(d-k-2p-2q) \ge 4(m-p-q)+2(m-k-q).
\end{aligned}
$$
Together with $B(D)\ge 0$, we get a system of inequations:
\begin{align}
d & \ge k+2p+2q, \quad \label{eq22}\\
d & \ge 3m-2k+p, \quad \label{eq23}\\
2d & \ge 5m+k-p,  \quad \label{eq24}\\
d & \ge 3m-q.  \quad \label{eq25}
\end{align}
Multiplying the inequations \eqref{eq22}, \eqref{eq23}, \eqref{eq24} 
and \eqref{eq25} by $1,$ $3,$ $5,$ and $2$ respectively, 
and then summing them up gives $d\ge \frac{5}{2}m$. 
Thank to Lemma \ref{Sec3-lem}, the Waldschmidt constant of 
$\mathbb{X}$ is $\frac{5}{2}.$

\smallskip\noindent 
\underline{\textbf{Subcase 3:}}\  
\textit{$Q$ is the intersection point of exactly 
	two lines of the form $P_iP_j$ (see Figure~\ref{Figure010-3}).}\
\begin{figure}[ht] 
	\begin{center}
		\begin{tikzpicture}[scale=0.35]
			\draw[-,blue!30!black,thick] (-5,-18/5) -- (6,3); 
			\draw[-,blue!30!black,thick] (-6,11/5) -- (7,-2/5); 
			\draw[-,blue!30!black,thick] (0,4.2) -- (0,-4.2); 
			--
			\draw[style=thick] (0,0) 	ellipse (5 and 3);
			--				
			\filldraw[color=black, fill=blue!80!black] (-4,9/5) circle (4.5pt)
			node[anchor=south east]{$P_1$};
			\filldraw[color=black, fill=blue!80!black] (5,0) circle (4.5pt)
			node[anchor=south west]{$P_2$};
			--
			\filldraw[color=black, fill=blue!80!black] (0,3) circle (4.5pt)
			node[anchor=south west]{$P_3$};
			\filldraw[color=black, fill=blue!80!black] (0,-3) circle (4.5pt)
			node[anchor=north west]{$P_4$};
			--
			\filldraw[color=black, fill=blue!80!black] (-3,-12/5) circle (4.5pt)
			node[anchor=north ]{$P_5$};
			\filldraw[color=black, fill=blue!80!black] (4,9/5) circle (4.5pt)
			node[anchor=south]{$P_6$};
			--
			\filldraw[color=black, fill=blue!80!black] (0,1) circle (4.5pt)
			node[anchor=south west]{$Q$};
		\end{tikzpicture}\vspace*{-0.3cm}
	\end{center}
	\caption{Six points on an irreducible conic and an external point - Type 2(iii)
		\label{Figure010-3}}
\end{figure}
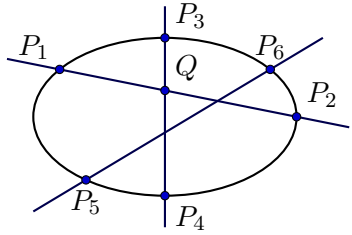
Without loss of generality, we may assume that 
$Q\in L_1\cap L_2$, where $L_1 = P_1P_2$ and $L_2 =P_3P_4$. 
Let $L_3=P_5Q, L_4=P_6Q.$ Let $D$ be a divisor of degree $d$ containing $m\mathbb{X}$, whose B\'ezout decomposition is
$$
D=k(L_1+L_2)+p(L_3+L_4)+qC+B(D).
$$
By Proposition~\ref{bezout decomposition}, we have
$$
\begin{aligned}
B(D)\cdot L_3 & = d-2k-2p-2q \ge (m-p-q)+(m-2k-2p),\\
B(D)\cdot C & = 2(d-2k-2p-2q) \ge 4(m-k-q)+2(m-p-q).
\end{aligned}
$$
Hence $d\ge 2m-p+q$ and $d\ge 3m-q+p$, these inequations imply 
$d\ge \frac{5}{2}m.$ Combining this with Lemma~\ref{Sec3-lem} 
yields $\hat{\alpha}_{\mathbb{X}} =\frac{5}{2}$, as wanted.
\end{proof}

\subsection{Configuration of seven points on an irreducible conic and one external point} \

As above, let $C$ be an irreducible conic in $\mathbb{P}^2$,
let $P_1,\dots, P_7$ be points on $C$ and $Q\notin C$, and let 
$\mathbb{X}=\{P_1, P_2, \dots, P_7, Q\}$ be the set of eight points. 
The Waldschmidt constant of $\mathbb{X}$ can be described as follows.

\begin{proposition} \label{Prop-8Points-ConicMinusOnePoint}
For $\mathbb{X}=\{P_1, P_2, \dots, P_7, Q\}\subseteq \mathbb{P}^2$ as above, 
we have $\hat{\alpha}_{\mathbb{X}}\ge \frac{5}{2}$. 
Furthermore, the equality holds if and only if $Q$ is an intersection point 
of three concurrent lines of the form $P_iP_j.$
\end{proposition}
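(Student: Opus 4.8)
The plan is to split the statement into the universal bound $\hat{\alpha}_{\mathbb{X}}\ge \frac52$ and the characterization of equality. The governing combinatorial fact is that $C$, being an irreducible conic, meets every line in at most two points; hence each line through $Q$ contains at most two of the $P_i$, the pairs $\{P_i,P_j\}$ with $Q\in P_iP_j$ are pairwise disjoint, and so $Q$ lies on at most $\lfloor 7/2\rfloor=3$ lines of the form $P_iP_j$. Let $s\in\{0,1,2,3\}$ count these lines; the hypothesis in the statement is precisely $s=3$.

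To obtain $\hat{\alpha}_{\mathbb{X}}\ge \frac52$ I would restrict to a suitable subset. One can always select six of the seven points $P_i$ whose connecting lines contain $Q$ in at most two instances: for $s\le 2$ any six points work, while for $s=3$ it suffices to discard one point of one pair and retain the unpaired point. Adjoining $Q$, these six points form a Type~2 configuration in the sense of the preceding subsection, so Proposition~\ref{Prop 3.2 type 1 type 2}(b) gives them Waldschmidt constant $\frac52$; monotonicity of $\hat{\alpha}$ under the inclusion $I_{\mathbb{X}}\subseteq I_{\mathbb{Y}}$ then yields $\hat{\alpha}_{\mathbb{X}}\ge\frac52$. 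For the implication $s=3\Rightarrow\hat{\alpha}_{\mathbb{X}}=\frac52$ I would display an optimal divisor: letting $L_1,L_2,L_3$ be the three concurrent lines through $Q$ (which together cover six of the $P_i$) and $M$ the line joining $Q$ to the seventh point, set
\[
D\;=\;3mC+m(L_1+L_2+L_3)+mM.
\]
Then $\deg D=10m$ and $D$ vanishes to order $4m$ at every point ($3m+m$ from $C$ and the single line through it at each $P_i$, and $m+m+m+m$ from $L_1,L_2,L_3,M$ at $Q$), so $\alpha_{4m\mathbb{X}}\le 10m$ and $\hat{\alpha}_{\mathbb{X}}\le\frac52$; combined with the previous bound, the equality follows.

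The core of the argument is the reverse implication, $\hat{\alpha}_{\mathbb{X}}>\frac52$ whenever $s\le 2$, where subsets no longer suffice and all eight points must enter. Given a divisor $D$ of degree $d$ with multiplicity $\ge m$ at each point, I would take its B\'ezout decomposition $D=qC+\sum_j a_jN_j+B(D)$ with respect to $C$ and the $7-s$ distinct lines $N_j=QP_i$ (a line carrying a pair counted once). Applying Proposition~\ref{bezout decomposition} to $B(D)$ against $C$ gives $2d\ge 7m-3q+\sigma$, where $\sigma$ is the sum of the coefficients $a_j$ over the $7-2s$ lines that carry a single $P_i$, and applying it against each such single-point line $N_j$ gives $d\ge 2m+q-a_j$. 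Combining the first inequality with weight $7-2s$ and each of the $7-2s$ single-line inequalities with weight $3$ cancels the $q$-terms and attaches the coefficient $2s-4$ to each $a_j$; since $2s-4\le 0$ for $s\le 2$ and all $a_j\ge 0$, these contributions may be dropped to leave $5(7-2s)\,d\ge 13(7-2s)\,m$, hence $d\ge\frac{13}{5}m$ and $\hat{\alpha}_{\mathbb{X}}\ge\frac{13}{5}>\frac52$.

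I expect the decisive difficulty to be pinning down these weights and recognizing that $s\le 2$ is exactly the threshold at which the elimination closes: the sign of the surviving coefficient $2s-4$ is the algebraic trace of the geometric dichotomy, turning nonpositive precisely when $Q$ fails to lie on three concurrent secants. For $s=3$ the same elimination degenerates and returns only $\frac52$, in agreement with the explicit divisor above, so the estimate is sharp and the characterization is exactly the split at $s=3$.
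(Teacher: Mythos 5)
Your proof is correct, and while the equality branch coincides with the paper's, the core estimate takes a genuinely different route. For $s=3$ you do essentially what the paper does: reduce to a six-point Type~2 subconfiguration (dropping one point of one secant pair) and invoke Proposition~\ref{Prop 3.2 type 1 type 2}(b) plus monotonicity for the lower bound, then exhibit the same degree-$10m$, multiplicity-$4m$ divisor $3mC+m(L_1+L_2+L_3)+mM$ for the upper bound. The difference is in proving $\hat{\alpha}_{\mathbb{X}}\ge \frac{13}{5}$ when $s\le 2$: the paper splits into three subcases according to how many secants pass through $Q$, and in each one uses a B\'ezout decomposition built from auxiliary irreducible conics through five of the points (two conics in Subcase 1, five in Subcase 2, one in Subcase 3), whereas you use a single decomposition with respect to $C$ and the $7-s$ lines $QP_i$ alone, with a weight computation parameterized by $s$. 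I checked your two families of inequalities: against $C$, $2(d-2q-A)\ge \sum_{i=1}^{7}(m-q-a_{j(i)})$ simplifies to $2d\ge 7m-3q+\sigma$ precisely because each pair line is counted twice on the right; against a single-point line, $d-2q-A\ge (m-q-a_j)+(m-A)$ gives $d\ge 2m+q-a_j$. With weights $t=7-2s$ and $3$ the $q$-terms cancel and the surviving $\sigma$-term carries coefficient $t-3=4-2s$, nonnegative on the right-hand side (equivalently, $2s-4\le 0$ when moved to the left, as you phrased it), so it may be discarded, and since $t\ge 3>0$ for $s\le 2$ one gets $5td\ge 13tm$, i.e.\ $d\ge\frac{13}{5}m$. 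Your route buys uniformity and economy: it treats $s=0,1,2$ simultaneously, needs no auxiliary conics (hence no verification that five of the relevant points are in general position so that the conic through them is irreducible, a point the paper leaves implicit), and it makes the sharpness at $s=3$ transparent as the sign change of $4-2s$. What it gives up is the extra precision of the paper's Subcases 1 and 2, where matching upper-bound divisors show $\hat{\alpha}_{\mathbb{X}}=\frac{13}{5}$ exactly; your argument yields only the inequality $\hat{\alpha}_{\mathbb{X}}\ge\frac{13}{5}$, which is, however, all the proposition requires.
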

\begin{proof}
If $Q$ is the intersection point of three concurrent lines of the form $P_iP_j$,
then,  w.l.o.g., we assume that $Q=L_1\cap L_2\cap L_3$, where 
$L_1=P_1P_2, L_2=P_3P_4, L_3=P_5P_6.$ By Proposition~\ref{Prop 3.2 type 1 type 2}, 
the subscheme $\mathbb{Y}=\mathbb{X}\setminus \{P_1\}$ of $\mathbb{X}$ has 
the Waldschmidt constant $\frac{5}{2}$, 
and thus $\hat{\alpha}_{\mathbb{X}}\ge \frac{5}{2}.$
Let $L$ be the line passing through $P_7$ and $Q$, then the divisor
$D=m(L_1+L_2+L_3+L+3C)$ contains $\mathbb{X}$ of multiplicity $4m$. Hence $\hat{\alpha}_{\mathbb{X}}\le \frac{5}{2}$. In conclusion, we have  $\hat{\alpha}_{\mathbb{X}}= \frac{5}{2}$.
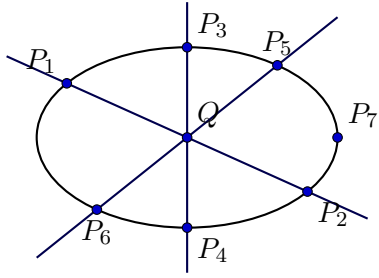
\begin{figure}[ht]
	\begin{center}
		\begin{tikzpicture}[scale=0.4]
			\draw[-,blue!30!black,thick] (5,4) -- (-5,-4); 
			\draw[-,blue!30!black,thick] (-6,54/20) -- (6,-54/20); 
			\draw[-,blue!30!black,thick] (0,4.5) -- (0,-4.5); 
			--
			\draw[style=thick] (0,0) 	ellipse (5 and 3);
			--				
			\filldraw[color=black, fill=blue!80!black] (5,0) circle (4.5pt)
			node[anchor=south west]{$P_7$};
			--
			\filldraw[color=black, fill=blue!80!black] (-4,9/5) circle (4.5pt)
			node[anchor=south east]{$P_1$};
			\filldraw[color=black, fill=blue!80!black] (4,-9/5) circle (4.5pt)
			node[anchor=north west]{$P_2$};
			--
			\filldraw[color=black, fill=blue!80!black] (0,3) circle (4.5pt)
			node[anchor=south west]{$P_3$};
			\filldraw[color=black, fill=blue!80!black] (0,-3) circle (4.5pt)
			node[anchor=north west]{$P_4$};
			--
			\filldraw[color=black, fill=blue!80!black] (3,12/5) circle (4.5pt)
			node[anchor=south]{$P_5$};
			\filldraw[color=black, fill=blue!80!black] (-3,-12/5) circle (4.5pt)
			node[anchor=north ]{$P_6$};
			--
			\filldraw[color=black, fill=blue!80!black] (0,0) circle (4.5pt)
			node[anchor=south west]{$Q$};
		\end{tikzpicture}\vspace*{-0.3cm}
	\end{center}
	\caption{Seven points on an irreducible conic and 
		a concurrently external point}
\end{figure}

Now we prove that, if there are not 3 lines of the form $P_iP_j$ concurrent at $Q$, then $\hat{\alpha}_{\mathbb{X}}\ge \frac{13}{5}> \frac{5}{2}.$
We have the following three circumstances.

\smallskip\noindent 
\underline{\textbf{Subcase 1:}}\ \textit{$Q$ is the intersection of 2 lines.}\\
Without loss of generality, we assume that $Q\in L_1\cap L_2$ where $L_1=P_1P_2$ and $L_2=P_3P_4.$ 
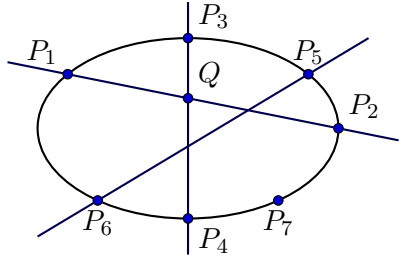
\begin{figure}[ht] 
	\begin{center}
		\begin{tikzpicture}[scale=0.4]
			\draw[-,blue!30!black,thick] (-5,-18/5) -- (6,3); 
			\draw[-,blue!30!black,thick] (-6,11/5) -- (7,-2/5); 
			\draw[-,blue!30!black,thick] (0,4.2) -- (0,-4.2); 
			--
			\draw[style=thick] (0,0) 	ellipse (5 and 3);
			--
			\filldraw[color=black, fill=blue!80!black] (-4,9/5) circle (4.5pt)
			node[anchor=south east]{$P_1$};
			\filldraw[color=black, fill=blue!80!black] (5,0) circle (4.5pt)
			node[anchor=south west]{$P_2$};
			--
			\filldraw[color=black, fill=blue!80!black] (0,3) circle (4.5pt)
			node[anchor=south west]{$P_3$};
			\filldraw[color=black, fill=blue!80!black] (0,-3) circle (4.5pt)
			node[anchor=north west]{$P_4$};
			--				
			\filldraw[color=black, fill=blue!80!black] (-3,-12/5) circle (4.5pt)
			node[anchor=north ]{$P_6$};
			\filldraw[color=black, fill=blue!80!black] (4,9/5) circle (4.5pt)
			node[anchor=south]{$P_5$};
			\filldraw[color=black, fill=blue!80!black] (3,-12/5) circle (4.5pt)
			node[anchor=north ]{$P_7$};
			--
			\filldraw[color=black, fill=blue!80!black] (0,1) circle (4.5pt)
			node[anchor=south west]{$Q$};
		\end{tikzpicture}\vspace*{-0.3cm}
	\end{center}
	\caption{Seven points on an irreducible conic and an external point - Subcase 1}
\end{figure}
Let $C_1, C_2$ be the irreducible conics defined by $\{P_3, P_5, P_6, P_7, Q\}$ and 
$\{P_4, P_5, P_6, P_7, Q\}$. 
Then $D'=m(C_1+C_2+L_1+2L_2+3C)$ vanishes along $\mathbb{X}$ 
of multiplicity $5m$, and thus $\hat{\alpha}_{\mathbb{X}}\le \frac{13}{5}.$
Now let $D$ be a divisor of degree $d$ vanishing along $m\mathbb{X}$. 
The B\'ezout decomposition of $D$ is given by
$$
D=k(C_1+C_2)+pL_1+qL_2+rC+B(D).
$$
By Proposition~\ref{bezout decomposition}, we have 
$$
\begin{aligned}
B(D)\cdot C_1&= 2(d-4k-p-q-2r) \ge 3(m-2k-r)+(m-k-q-r)+(m-2k-p-q),\\
B(D)\cdot L_1 & = d-4k-p-q-2r\ge 2(m-p-r)+(m-2k-p-q).
\end{aligned}
$$ 
This yields the system of inequalities
\begin{align}
 2d & \ge 5m-n+p, \quad \label{eq27}\\
 d & \ge 3m+2n-2p.\quad \label{eq28}
\end{align}
Multiplying the first  and the second inequality by $2$, $1$, respectively and summing up, we obtain $d\ge \frac{13}{5}m$, and so 
$\hat{\alpha}_{\mathbb{X}}\ge \frac{13}{5}.$
Consequently, we get $\hat{\alpha}_{\mathbb{X}}= \frac{13}{5}.$

\smallskip\noindent 
\underline{\textbf{Subcase 2:}}\ 
\textit{$Q$ lies on only one line of the form $P_iP_j$, says $L=P_6P_7.$ }
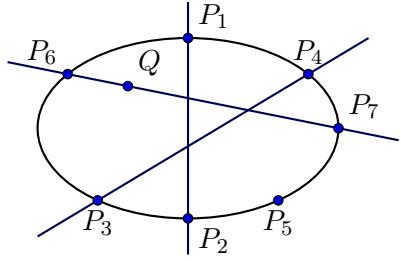
\begin{figure}[ht] 
	\begin{center}
		\begin{tikzpicture}[scale=0.4]
			\draw[-,blue!30!black,thick] (-5,-18/5) -- (6,3); 
			\draw[-,blue!30!black,thick] (-6,11/5) -- (7,-2/5); 
			\draw[-,blue!30!black,thick] (0,4.2) -- (0,-4.2); 
			--
			\draw[style=thick] (0,0) 	ellipse (5 and 3);
			--
			\filldraw[color=black, fill=blue!80!black] (-4,9/5) circle (4.5pt)
			node[anchor=south east]{$P_6$};
			\filldraw[color=black, fill=blue!80!black] (5,0) circle (4.5pt)
			node[anchor=south west]{$P_7$};
			--
			\filldraw[color=black, fill=blue!80!black] (0,3) circle (4.5pt)
			node[anchor=south west]{$P_1$};
			\filldraw[color=black, fill=blue!80!black] (0,-3) circle (4.5pt)
			node[anchor=north west]{$P_2$};
			--				
			\filldraw[color=black, fill=blue!80!black] (-3,-12/5) circle (4.5pt)
			node[anchor=north ]{$P_3$};
			\filldraw[color=black, fill=blue!80!black] (4,9/5) circle (4.5pt)
			node[anchor=south]{$P_4$};
			\filldraw[color=black, fill=blue!80!black] (3,-12/5) circle (4.5pt)
			node[anchor=north ]{$P_5$};
			--
			\filldraw[color=black, fill=blue!80!black] (-2,7/5) circle (4.5pt)
			node[anchor=south west]{$Q$};
		\end{tikzpicture}\vspace*{-0.3cm}
	\end{center}
	\caption{Seven points on an irreducible conic and an external point - Subcase 2}
\end{figure}

\noindent 
Thank to Lemma \ref{Sec3-lem}, let $\mathfrak{C}_1, \mathfrak{C}_2$ 
be irreducible cubic curves passing sets of 7 points 
$\{P_1, P_2, \dots, P_5, P_6, Q\}$ and 
$\{P_1, P_2, \dots, P_5, P_7, Q\}$, respectively.
Note that these cubic curves pass through $Q$ with the multiplicity 2. 
Observe that the divisor $D'=\mathfrak{C}_1+\mathfrak{C}_2+L+3C$ 
is of degree $13$ and vanishes along $5\mathbb{X}$. This yields
$\hat{\alpha}_{\mathbb{X}}\le \frac{13}{5}.$
Now we will prove that $\hat{\alpha}_{\mathbb{X}}=\frac{13}{5}.$ 
Let $C_i$ be the irreducible conic passing through 
$\{Q, P_1, P_2, P_3, P_4, P_5\}\setminus \{P_i\}$ for $i=1,\dots,5$,
and let $D$ be a divisor of degree $d$ passing through $m\mathbb{X}$.
The B\'ezout decomposition of $D$ is given by
$$
D = k(C_1+\cdots+C_5)+pL+qC + B(D).
$$
An application of Proposition~\ref{bezout decomposition} implies
$$
\begin{aligned}
B(D)\cdot C_1 &= 2(d-10k-p-2q) \ge 4(m-4k-q)+(m-5k-p),\\
B(D) \cdot L &= d-10k-p-2q  \ge 2(m-p-q)+(m-5k-p),
\end{aligned}
$$
and subsequently 
\begin{align*}
2d & \ge 5m-k+p, \quad \\
d & \ge 3m +5k-2p.
\end{align*}
As in the first subcase, these inequalities yield  $d\ge \frac{13}{5}m$, 
and so $\hat{\alpha}_{\mathbb{X}}\ge\frac{13}{5}$.
Hence we conclude that $\hat{\alpha}_{\mathbb{X}}=\frac{13}{5}.$

\smallskip\noindent 
\underline{\textbf{Subcase 3:}}\  
\textit{$Q$ does not lie on any line of the form $P_iP_j.$} 
\begin{figure}[ht] 
	\begin{center}
		\begin{tikzpicture}[scale=0.4]
			\draw[-,blue!30!black,thick] (-5,-18/5) -- (6,3); 
			\draw[-,blue!30!black,thick] (-6,11/5) -- (7,-2/5); 
			\draw[-,blue!30!black,thick] (0,4.2) -- (0,-4.2); 
			--
			\draw[style=thick] (0,0) 	ellipse (5 and 3);
			--
			\filldraw[color=black, fill=blue!80!black] (-4,9/5) circle (4.5pt)
			node[anchor=south east]{$P_5$};
			\filldraw[color=black, fill=blue!80!black] (5,0) circle (4.5pt)
			node[anchor=south west]{$P_6$};
			--
			\filldraw[color=black, fill=blue!80!black] (0,3) circle (4.5pt)
			node[anchor=south west]{$P_1$};
			\filldraw[color=black, fill=blue!80!black] (0,-3) circle (4.5pt)
			node[anchor=north west]{$P_2$};
			--				
			\filldraw[color=black, fill=blue!80!black] (-3,-12/5) circle (4.5pt)
			node[anchor=north ]{$P_3$};
			\filldraw[color=black, fill=blue!80!black] (4,9/5) circle (4.5pt)
			node[anchor=south]{$P_4$};
			\filldraw[color=black, fill=blue!80!black] (3,-12/5) circle (4.5pt)
			node[anchor=north ]{$P_7$};
			--
			\filldraw[color=black, fill=blue!80!black] (0.5,0.3) circle (4.5pt)
			node[anchor=south west]{$Q$};
		\end{tikzpicture}\vspace*{-0.3cm}
	\end{center}
	\caption{Seven points on an irreducible conic and an external point - Subcase 3}
\end{figure}
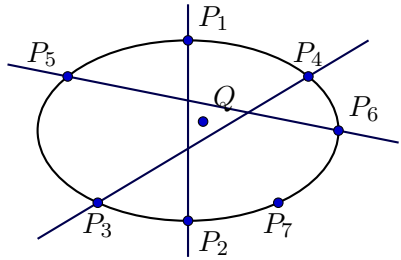

\noindent
Let $C_1$ be the irreducible conic passing through $\{Q, P_4, P_5, P_6, P_7\}$ 
and let $L_i=P_iQ$ for $1\le i\le 3.$ Further, let $D$ be a divisor 
of degree $d$ passing through $m\mathbb{X}$. The B\'ezout decomposition 
of $D$ for $C_1, L_1, L_2, L_3$ and $C$ is given by
$$
D = kC_1+p(L_1+L_2+L_3)+qC+B(D).
$$
By Proposition~\ref{bezout decomposition}, we get
$$
\begin{aligned}
B(D)\cdot L_1& = d-2k-3p-2q \ge (m-p-q)+(m-k-3p),\\
B(D) \cdot C & = 2(d-2k-3p-2q) \ge 4(m-k-q)+3(m-p-q).
\end{aligned}
$$
This implies $d  \ge 2m+k-p+q$ and $2d  \ge 7m+3p-3q$, and hence
$5d \ge 13m+3k$. Therefore, we get 
the inequality $\hat{\alpha}_{\mathbb{X}}\ge \frac{13}{5}.$
\end{proof}

From the proposition, we derive the following consequence.

\begin{corollary}\label{Sec3-Coro}
Let $\mathbb{X}=\{P_1,\dots, P_{n-1}, Q\}$ be a set of $n\ge 8$ points 
in $\mathbb{P}^2$, where $P_1,\dots, P_{n-1}$ are contained in 
an irreducible conic $C$ and $Q\notin C.$ Then
\begin{enumerate}
	\item[(a)] $\hat{\alpha}_{\mathbb{X}}\ge \frac{5}{2}$.
	\item[(b)] $\hat{\alpha}_{\mathbb{X}}=\frac{5}{2}$ if and only if
	one of the following conditions is satisfied:
	\begin{enumerate}
		\item[(i)] $n=8$ and $Q$ is the intersection point of three concurrent lines of the form $P_iP_j$.
		\item[(ii)] $n=9$ and $Q$ is the intersection point of four concurrent lines of the form $P_iP_j$.
	\end{enumerate}
\end{enumerate}
\end{corollary}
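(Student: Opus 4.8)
The plan is to reduce the statement to the two preceding propositions by exploiting the monotonicity of the Waldschmidt constant: if $\mathbb{Y}\subseteq\mathbb{X}$ then $I_{\mathbb{X}}\subseteq I_{\mathbb{Y}}$, hence $\hat{\alpha}_{\mathbb{X}}\ge\hat{\alpha}_{\mathbb{Y}}$. The geometric input is a single Bézout observation: since $Q\notin C$ and $C$ is an irreducible conic, every line through $Q$ meets $C$ in at most two points, so each chord $P_iP_j$ passing through $Q$ contains exactly two of the points $P_i$, and two distinct such chords meet only at $Q$. Thus the chords through $Q$ pair up the points $P_1,\dots,P_{n-1}$, and if $k$ denotes their number then $2k\le n-1$, i.e. $k\le\lfloor(n-1)/2\rfloor$.

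Part (a) is immediate: as $n\ge 8$ there are at least seven of the $P_i$, and any seven of them together with $Q$ form a subscheme $\mathbb{Y}\subseteq\mathbb{X}$ covered by Proposition~\ref{Prop-8Points-ConicMinusOnePoint}, so $\hat{\alpha}_{\mathbb{X}}\ge\hat{\alpha}_{\mathbb{Y}}\ge\frac52$.

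For part (b) I split according to $n$. The case $n=8$ is exactly Proposition~\ref{Prop-8Points-ConicMinusOnePoint}, giving equality precisely in case (i). For the sufficiency in (ii) (so $n=9$ and $Q=L_1\cap\cdots\cap L_4$ with $L_1=P_1P_2,\dots,L_4=P_7P_8$) I exhibit the divisor $D=m(L_1+L_2+L_3+L_4+3C)$: it has degree $10m$, vanishes to order $4m$ at each $P_i$ (order $3m$ coming from $3mC$ and order $m$ from the unique line $L_j$ through $P_i$) and to order $4m$ at $Q$, whence $\hat{\alpha}_{\mathbb{X}}\le\frac{10m}{4m}=\frac52$; together with (a) this gives equality. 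It remains to prove necessity, namely that $\hat{\alpha}_{\mathbb{X}}>\frac52$ whenever neither (i) nor (ii) holds; this is the crux of the argument.

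For necessity I produce a seven-point subset on which Proposition~\ref{Prop-8Points-ConicMinusOnePoint} yields a value strictly above $\frac52$. If $k\le 2$, any seven of the $P_i$ together with $Q$ already have $Q$ on at most two chords. If $k\ge 3$, I delete the $n-8$ points not retained so as to break $k-2$ of the chords (deleting one point from each), which is possible as soon as $k-2\le n-8$, i.e. $k\le n-6$; the surviving eight-point configuration then has $Q$ on at most two chords. In either case Proposition~\ref{Prop-8Points-ConicMinusOnePoint} gives $\hat{\alpha}_{\mathbb{Y}}\ge\frac{13}{5}>\frac52$, and monotonicity propagates this to $\mathbb{X}$. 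The main obstacle is precisely verifying $k\le n-6$: by the Bézout bound $k\le\lfloor(n-1)/2\rfloor$, and one checks $\lfloor(n-1)/2\rfloor\le n-6$ for all $n\ge 10$, so the deletion always succeeds and equality fails for $n\ge 10$. When $n=9$ the inequality $k\le n-6=3$ holds unless $k=4$, that is, unless all eight points are paired into four concurrent chords through $Q$—which is exactly case (ii). Collecting the ranges $n=8$, $n=9$, and $n\ge 10$ yields the asserted equivalence.
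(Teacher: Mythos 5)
Your proof is correct and follows essentially the same route as the paper: monotonicity of $\hat{\alpha}$ under subsets, Proposition~\ref{Prop-8Points-ConicMinusOnePoint} for the lower bound and the $n=8$ case, and the same divisor $m(L_1+L_2+L_3+L_4+3C)$ for sufficiency in case (ii). Your only departure is a welcome one: where the paper merely asserts that a bad configuration contains an $8$-point subset with $Q$ on at most two chords, you actually verify it via the chord count $k\le\lfloor (n-1)/2\rfloor$ and the deletion feasibility condition $k-2\le n-8$, making explicit why the exceptional case $k=4$, $n=9$ is exactly configuration (ii).
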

\begin{proof}
(a)\quad 
Let $\mathbb{Y}=\{P_1, P_2, \dots, P_7, Q\}\subseteq \mathbb{X}$.
Proposition~\ref{Prop-8Points-ConicMinusOnePoint} yields that 
$\hat{\alpha}_{\mathbb{X}}\ge \hat{\alpha}_{\mathbb{Y}}\ge \frac{5}{2}$.

\medskip\noindent (b)\quad 
If $n=8$ and $Q$ is the intersection point of three concurrent lines 
of the form $P_iP_j$, then Proposition~\ref{Prop-8Points-ConicMinusOnePoint} 
implies $\hat{\alpha}_{\mathbb{X}} = \frac{5}{2}$.
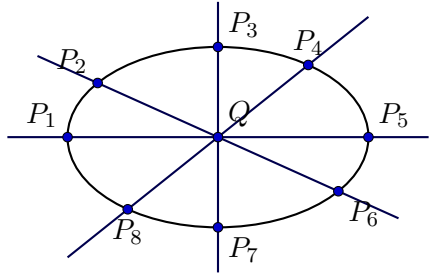
\begin{figure}[ht]
	\begin{center}
		\begin{tikzpicture}[scale=0.4]
			\draw[-,blue!30!black,thick] (5,4) -- (-5,-4); 
			\draw[-,blue!30!black,thick] (-6,54/20) -- (6,-54/20); 
			\draw[-,blue!30!black,thick] (0,4.5) -- (0,-4.5); 
			\draw[-,blue!30!black,thick] (-7,0) -- (7,0); 
			--
			\draw[style=thick] (0,0) 	ellipse (5 and 3);
			--				
			\filldraw[color=black, fill=blue!80!black] (-5,0) circle (4.5pt)
			node[anchor=south east]{$P_1$};
			\filldraw[color=black, fill=blue!80!black] (5,0) circle (4.5pt)
			node[anchor=south west]{$P_5$};
			--
			\filldraw[color=black, fill=blue!80!black] (-4,9/5) circle (4.5pt)
			node[anchor=south east]{$P_2$};
			\filldraw[color=black, fill=blue!80!black] (4,-9/5) circle (4.5pt)
			node[anchor=north west]{$P_6$};
			--
			\filldraw[color=black, fill=blue!80!black] (0,3) circle (4.5pt)
			node[anchor=south west]{$P_3$};
			\filldraw[color=black, fill=blue!80!black] (0,-3) circle (4.5pt)
			node[anchor=north west]{$P_7$};
			--
			\filldraw[color=black, fill=blue!80!black] (3,12/5) circle (4.5pt)
			node[anchor=south]{$P_4$};
			\filldraw[color=black, fill=blue!80!black] (-3,-12/5) circle (4.5pt)
			node[anchor=north ]{$P_8$};
			--
			\filldraw[color=black, fill=blue!80!black] (0,0) circle (4.5pt)
			node[anchor=south west]{$Q$};
		\end{tikzpicture}\vspace*{-0.3cm}
	\end{center}
	\caption{Eight points on an irreducible conic and 
		a concurrently external point \label{Figure11}}
\end{figure}
Consider the case that $n=9$ and $Q$ is the intersection point 
of four concurrent lines of the form $P_iP_j$ (see Figure~\ref{Figure11}).
By (a), we have $\hat{\alpha}_{\mathbb{X}} \ge \frac{5}{2}$.
Without loss of generality, we may assume that $Q=\cap_{i=1}^4 L_i$ 
with $L_i = P_iP_{4+i}$. Then the divisor $D= m(L_1+L_2+L_3+L_4+3C)$
is of degree $10$ and passes through $\mathbb{X}$ of multiplicity $4m$.
It follows that $\hat{\alpha}_{\mathbb{X}} \le \frac{5}{2}$.
So, in this case we also have $\hat{\alpha}_{\mathbb{X}} = \frac{5}{2}$.

Next we consider the case that $\mathbb{X}$ does not have 
the configurations as in (i) and (ii). Then $\mathbb{X}$ contains a subset
$\mathbb{Y}$ of $8$ points including $Q$ such that $Q$ is
not an intersection point of any three concurrent lines of the form $P_iP_j$.
By Proposition~\ref{Prop-8Points-ConicMinusOnePoint}, 
we have $\hat{\alpha}_{\mathbb{X}}\ge \hat{\alpha}_{\mathbb{Y}}
\ge \frac{13}{5} > \frac{5}{2}$. This completes the proof.
\end{proof}

%
%
\goodbreak 
\section{Some configurations  of $9$ points whose Waldschmidt constant equals to $\frac{5}{2}$}

In this section, we let $\mathbb{X}=\{P_1,\dots,P_9\}$ be a set of nine points
in $\mathbb{P}^2$. We would like to see whether the Waldschmidt constant 
equals to $\frac{5}{2}$. It is well-known that there is a cubic curve 
passing through $\mathbb{X}$. When the cubic curve containing 
$\mathbb{X}$ is irreducible, we get the following observation.

\begin{proposition}
	If $\mathbb{X}$ is contained in an irreducible cubic curve $\mathfrak{C}$, 
	then $\hat{\alpha}_{\mathbb{X}}=3.$
\end{proposition}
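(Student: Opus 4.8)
The plan is to prove the two inequalities $\hat{\alpha}_{\mathbb{X}}\le 3$ and $\hat{\alpha}_{\mathbb{X}}\ge 3$ separately, both times exploiting that a single irreducible cubic through nine points is \emph{extremal}: its proper transform on the blow-up of $\mathbb{P}^2$ at $\mathbb{X}$ is the anticanonical class, whose self-intersection is $9-9=0$. For the upper bound I would simply exhibit a good divisor. Since $\mathfrak{C}$ passes through each $P_i$, the divisor $m\mathfrak{C}$ has degree $3m$ and vanishes to order at least $m$ at every point of $\mathbb{X}$ (exactly $m$ at a smooth point, and $2m$ at a singular point of $\mathfrak{C}$, should one of the $P_i$ be such). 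Hence $m\mathfrak{C}$ is a nonzero element of $\bigl(I_{\mathbb{X}}^{(m)}\bigr)_{3m}$, which gives $\alpha_{m\mathbb{X}}\le 3m$ for all $m$ and therefore $\hat{\alpha}_{\mathbb{X}}\le 3$.

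For the lower bound I would run the B\'ezout-decomposition machinery of Proposition~\ref{bezout decomposition} with the single curve $\mathfrak{C}$. Let $D$ be a divisor of degree $d$ vanishing along $m\mathbb{X}$, and write its B\'ezout decomposition $D=a\mathfrak{C}+B(D)$ with $a\ge 0$. Treating each $P_i$ as a simple point of $\mathfrak{C}$, Proposition~\ref{bezout decomposition} yields $B(D)\cdot\mathfrak{C}\ge\sum_{i=1}^{9}(m-a)=9(m-a)$. On the other hand, the reduction $B(D)$ contains no copy of the irreducible curve $\mathfrak{C}$, so the two share no common component and B\'ezout's theorem gives $B(D)\cdot\mathfrak{C}=3(d-3a)$. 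Combining the two relations produces $3d-9a\ge 9m-9a$; the multiples of $a$ cancel, leaving $d\ge 3m$. Thus $\alpha_{m\mathbb{X}}\ge 3m$ for all $m$, so $\hat{\alpha}_{\mathbb{X}}\ge 3$, and together with the upper bound we conclude $\hat{\alpha}_{\mathbb{X}}=3$.

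The step I expect to require the most care is the cancellation in the last display: it is no accident but the numerical shadow of $3\cdot 3=9$, i.e.\ the identity $(-K)^2=0$ for the nine-point blow-up, and it is precisely what makes $\mathfrak{C}$ simultaneously an upper- and lower-bound witness. The one genuinely delicate hypothesis is that every $P_i$ is a simple point of $\mathfrak{C}$. An irreducible cubic has at most one singular point, and if some $P_i$ is a node or cusp the bookkeeping changes: the right-hand side of the B\'ezout inequality becomes $10m-12a$, and together with the effectivity constraint $m-2a\ge 0$ the naive estimate only yields $d\ge\tfrac{1}{3}(10m-3a)$. To close this case I would argue separately, distinguishing whether $\mathfrak{C}\subseteq\Supp(D)$ or not: when $\mathfrak{C}\not\subseteq\Supp(D)$ the direct intersection $D\cdot\mathfrak{C}=3d\ge 2m+8m=10m$ already gives the stronger bound $d\ge\tfrac{10}{3}m$, so only the component subcase $\mathfrak{C}\subseteq\Supp(D)$ needs the finer analysis, for which passing to the blow-up and invoking nefness of the relevant class is the cleanest route.
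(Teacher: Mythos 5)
Your core argument coincides with the paper's proof: the upper bound comes from the divisor $m\mathfrak{C}$, and the lower bound from the B\'ezout decomposition $D=a\mathfrak{C}+B(D)$ together with $3(d-3a)\ge 9(m-a)$, where the terms in $a$ cancel to give $d\ge 3m$. In the case where every $P_i$ is a smooth point of $\mathfrak{C}$ --- which is the only case the paper treats, since its inequality $B(D)\cdot\mathfrak{C}\ge 9(m-k)$ tacitly takes $\Mult_{P_i}\mathfrak{C}=1$ for all $i$ --- your proof is correct and identical in substance to the paper's.

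The problem is your last paragraph. You are right to flag the possibility that some $P_i$ is the node or cusp of $\mathfrak{C}$, and right that the estimate then degenerates (your subcase $\mathfrak{C}\not\subseteq\Supp(D)$ is fine, giving $3d\ge 10m$); but your plan to ``close'' the remaining subcase cannot succeed, because the statement is actually false there. Suppose $P_1$ is the singular point. For even $m$, take $\frac{m}{2}\mathfrak{C}$ (degree $\frac{3m}{2}$, multiplicity $m$ at $P_1$ and $\frac{m}{2}$ at the other points) plus a divisor with multiplicity $\frac{m}{2}$ at the eight smooth points $P_2,\dots,P_9$. Any eight distinct points have Waldschmidt constant at most that of eight general points, namely $\frac{48}{17}<3$ (and in any event at most $2\sqrt{2}$ by a dimension count), so this yields $\hat{\alpha}_{\mathbb{X}}\le \frac{1}{2}\left(3+\frac{48}{17}\right)=\frac{99}{34}<3$. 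Accordingly, your nefness suggestion fails concretely: on the blow-up, the proper transform of $\mathfrak{C}$ is $3H-2E_1-\sum_{i\ge 2}E_i$, which is $-K-E_1$ rather than the anticanonical class, has self-intersection $-3$ rather than $0$, and meets $3H-\sum_{i=1}^{9}E_i$ in $-1$, so the latter is not nef. The correct repair is to add the hypothesis, implicit in the paper, that all points of $\mathbb{X}$ are smooth points of $\mathfrak{C}$. Note also that in the singular case one still gets $3d\ge 8m+k$ by running the same decomposition against the eight smooth points alone, hence $\hat{\alpha}_{\mathbb{X}}\ge\frac{8}{3}>\frac{5}{2}$, so the paper's subsequent use of the proposition (excluding $\hat{\alpha}_{\mathbb{X}}=\frac{5}{2}$ for point sets on an irreducible cubic) is unaffected by this edge case.
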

\begin{proof} 
Observe that $m\mathfrak{C}$ is a curve passing through $m\mathbb{X}$. 
Hence, the Waldschmidt constant of $\mathbb{X}$ is bounded by
$
\hat{\alpha}_{\mathbb{X}}\le 3.
$
Now let $D$ be a divisor of degree $d$ passing through $m\mathbb{X}$. 
The B\'{e}zout decomposition of $D$ via $\mathfrak{C}$ is defined by
$$
D = k\mathfrak{C}+B(D).
$$
Hence, we have
$$
B(D)\cdot \mathfrak{C} =3(d-3k) \ge 9(m-k).
$$ 
This inequality implies that 
$\hat{\alpha}_{\mathbb{X}}\ge 3$ and hence, we get $\hat{\alpha}_{\mathbb{X}}=3,$ as desired.
\end{proof}

Based on this observation, we have $\hat{\alpha}_{\mathbb{X}}=\frac{5}{2}$
only if $\mathbb{X}$ is not contained in an irreducible cubic curve.
In particular, any set of $n\ge 9$ points in $\mathbb{P}^2$ has
the Waldschmidt constant equal to $\frac{5}{2}$ only if there is no subset consisting
of $9$ points contained in an irreducible cubic.
When $\hat{\alpha}_{\mathbb{X}} < 3$, the set $\mathbb{X}$ 
must lie on 3 lines or on the union of an irreducible conic 
and one line. In this paper, we restrict our discussion on the second case.

Let $\mathbb{X}=\{P_1,\dots,P_9\}$ be a set of nine points 
in $\mathbb{P}^2$ that is not contained  in any irreducible cubic curve. 
Suppose that the points of $\mathbb{X}$ lie in an irreducible conic $C$ 
and a line.  We will consider the following circumstances 
for the position of points in $\mathbb{X}$.

\begin{enumerate}
	\item Eight points $P_1,\dots,P_8$ lie on $C$ and the remaining point 
	$P_9\notin C.$
	\item Seven points $P_1,\dots,P_7$ lie on $C$ and two others  
	$P_8, P_9 \notin C.$
	\item Six points $P_1,\dots,P_6$ lie on $C$ and three others 
	$P_7,P_8,P_9$ lie on a line $L$.
	\item Five points $P_1,\dots,P_5$ lie on $C$ and four others 
	$P_6,\dots,P_9$ lie on a line $L$.
\end{enumerate}

When the case (1) occurs, by Corollary~\ref{Sec3-Coro}, we have 
$\hat{\alpha}_{\mathbb{X}}\ge \frac{5}{2}$. The equality holds 
if and only if $Q$ is the intersection point of four concurrent lines 
of the form $P_iP_j$.

Now let us consider the case (2). 
In this case the Waldschmidt constant is estimated as follows.

\begin{proposition}
If $\mathbb{X}$ has the configuration as in (2), 
then $\hat{\alpha}_{\mathbb{X}}\ge \frac{18}{7}>\frac{5}{2}$. 
\end{proposition}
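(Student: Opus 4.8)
The plan is to bound $\hat{\alpha}_{\mathbb{X}}$ from below by a direct B\'ezout estimate, exactly in the spirit of the proofs of Propositions~\ref{Prop 3.2 type 1 type 2} and~\ref{Prop-8Points-ConicMinusOnePoint}. Fix a divisor $D$ of degree $d$ vanishing along $m\mathbb{X}$, and let $L=P_8P_9$ be the line through the two external points. Since the conic $C$ carries the seven points $P_1,\dots,P_7$ while $L$ carries $P_8,P_9$, the natural curves to decompose against are $C$ and $L$, so I would write the B\'ezout decomposition
\[
D=aC+bL+B(D),\qquad a,b\in\mathbb{N}.
\]

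Next I would apply Proposition~\ref{bezout decomposition} to $B(D)$ against $C$ and against $L$. In the generic situation, where $L$ meets $C$ away from the points of $\mathbb{X}$, this gives
\begin{align*}
B(D)\cdot C&=2(d-2a-b)\ \ge\ 7(m-a),\\
B(D)\cdot L&=\ d-2a-b\ \ \ge\ 2(m-b),
\end{align*}
together with $B(D)\ge 0$, i.e. $d\ge 2a+b$. These rearrange to
\[
2d+3a-2b\ge 7m,\qquad d-2a+b\ge 2m,\qquad d\ge 2a+b.
\]
The key step is then to take a nonnegative combination of the three inequalities whose $a$- and $b$-coefficients are $\le 0$; since $a,b\ge 0$ (and in fact $a,b\le m$ because $B(D)$ is effective), such a combination collapses to a clean estimate $7d\ge 18m$, whence $\hat{\alpha}_{\mathbb{X}}\ge\frac{18}{7}>\frac52$. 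Concretely, weighting the three inequalities by $22,49,5$ eliminates $b$ and leaves $98d-42a\ge 252m$; as $a\ge 0$ this already yields $d\ge\frac{18}{7}m$.

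Two issues demand care, and I would treat them as follows. First, $L$ may pass through one or two of the points $P_1,\dots,P_7$ on $C$; this alters the right-hand sides above, but in each such subcase the resulting system is at least as strong, so the bound $\frac{18}{7}$ persists, and I would record these simply as separate (easier) cases. Second, as an independent cross-check and to dispose of most configurations at once, one can argue by monotonicity: the subsets $\{P_1,\dots,P_7,P_8\}$ and $\{P_1,\dots,P_7,P_9\}$ each fall under Proposition~\ref{Prop-8Points-ConicMinusOnePoint}, so unless \emph{both} $P_8$ and $P_9$ are intersection points of three concurrent secants $P_iP_j$ of $C$, one of these subsets already forces $\hat{\alpha}_{\mathbb{X}}\ge\frac{13}{5}>\frac{18}{7}$.

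The main obstacle is the most degenerate configuration, in which both external points are triple concurrency points of secants of $C$, since there the subset estimate returns only $\frac52$ and the whole weight of the argument falls on the B\'ezout system above. The delicate point is not the arithmetic of the final linear combination but verifying that the inequalities, and a single fixed choice of multipliers, remain simultaneously valid for all admissible positions of $P_8,P_9$ relative to $C$ (in particular when extra collinearities force $L$ to meet $C$ at marked points), with no auxiliary curve degenerating. Organizing this case bookkeeping uniformly is where the real work lies.
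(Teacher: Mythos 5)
Your monotonicity reduction is sound and coincides with the paper's first step: unless both $P_8$ and $P_9$ are concurrency points of triples of secants $P_iP_j$, Proposition~\ref{Prop-8Points-ConicMinusOnePoint} applied to an eight-point subset already gives $\hat{\alpha}_{\mathbb{X}}\ge\frac{13}{5}>\frac{18}{7}$. Your generic-case system is also correct as far as it goes: when $L=P_8P_9$ meets $C$ away from $P_1,\dots,P_7$, the three inequalities $2d\ge 7m-3a+2b$, $d\ge 2m+2a-b$, $d\ge 2a+b$ are valid and do imply $d\ge\frac{18}{7}m$ (indeed their linear-programming optimum is even $\frac{20}{7}m$).

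The genuine gap is your assertion that when $L$ passes through one or two of the conic points ``the resulting system is at least as strong, so the bound $\frac{18}{7}$ persists.'' It is strictly weaker, and it fails below the target. If $L$ contains exactly one conic point, the system becomes $2d\ge 7m-3a+b$, $d\ge 3m+a-2b$, $d\ge 2a+b$, and the point $(d,a,b)=\bigl(\frac{5}{2}m,\frac{9}{10}m,\frac{7}{10}m\bigr)$ satisfies all three with equality, so you can extract only $d\ge\frac{5}{2}m$. If $L$ contains two conic points, the system becomes $2d\ge 7m-3a$, $d\ge 4m-3b$, $d\ge 2a+b$, and $(d,a,b)=\bigl(\frac{9}{4}m,\frac{5}{6}m,\frac{7}{12}m\bigr)$ is feasible, so you get only $d\ge\frac{9}{4}m$. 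Worse, this degenerate position is forced in exactly the hard case that survives your monotonicity reduction: if both $P_8,P_9$ are concurrency points and the two triples of concurrent secants share a common line (the paper's Subcase~1, Figure~\ref{Figure16}), that common line passes through both $P_8$ and $P_9$, hence \emph{equals} $P_8P_9$ and is a secant through two points of $C$ --- precisely the situation where your two-curve system collapses to $\frac{9}{4}<\frac{5}{2}$. This is why the paper, in the residual case, decomposes $D$ against $C$ together with several secants $L_{ij}$ adapted to the concurrency pattern, obtaining $\frac{45}{17}$, $\frac{18}{7}$ and $\frac{122}{43}$ in the respective configurations; to repair your argument you would likewise have to bring those extra secants into the B\'ezout decomposition in each degenerate position of $L$, since the pair $\{C,L\}$ alone cannot yield $\frac{18}{7}$ there.
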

\begin{proof}
Observe that $\hat{\alpha}_{\mathbb{X}}\ge \frac{5}{2}$
by Corollary \ref{Sec3-Coro}.
If $P_8$ or $P_9$ is not a point of concurrency of a triple of lines 
$P_iP_j$ with $1\leq i,j\leq 7, i\not=j$, then  
$\{P_1,\dots,P_7, P_8\}$ or $\{P_1,\dots,P_7, P_9\}$ has 
the Waldschmidt constant greater than or equal to $\frac{13}{5}$ 
by Proposition~\ref{Prop-8Points-ConicMinusOnePoint}.
In this case, $\hat{\alpha}_\mathbb{X}\ge \frac{13}{5} > \frac{18}{7}$. 
Now we suppose that both $P_8, P_9$ are the points 
of concurrency of two triples of lines $P_iP_j$ 
with $1\leq i,j\leq 7, i\not=j$. Then we will show that
$\hat{\alpha}_\mathbb{X}\ge \frac{18}{7}$ 
by distinguishing two following subcases. 

\smallskip\noindent 
\underline{\textbf{Subcase 1:}}\  
\textit{The two triples of concurrent lines have a common line.} 
Without loss of generality, we may assume that $\mathbb{X}=\{P_1,\dots,P_9\}$
has the configuration as in Figure~\ref{Figure16}.
Let $L_{ij} = P_iP_j$ for $1\le i<j\le 9$ and let $D$ be a divisor 
of degree $d$ passing through $m\mathbb{X}$. 
\begin{figure}[ht] 
	\begin{center}
		\begin{tikzpicture}[scale=0.4]
			\draw[-,blue!30!black,thick] (6,11/5) -- (-7,-2/5); 
			\draw[-,blue!30!black,thick] (-6,11/5) -- (7,-2/5); 
			\draw[-,blue!30!black,thick] (0,4.5) -- (0,-4.5); 
			\draw[-,blue!30!black,thick] (6,21/5) -- (-1.5,-24/5); 
			\draw[-,blue!30!black,thick] (6,-12/5) -- (-1.5,87/20); 
			--
			\draw[style=thick] (0,0) 	ellipse (5 and 3);
			--				
			\filldraw[color=black, fill=blue!80!black] (-5,0) circle (4.5pt)
			node[anchor=south east]{$P_3$};
			\filldraw[color=black, fill=blue!80!black] (4,9/5) circle (4.5pt)
			node[anchor=south]{$P_4$};
			--
			\filldraw[color=black, fill=blue!80!black] (-4,9/5) circle (4.5pt)
			node[anchor=south east]{$P_1$};
			\filldraw[color=black, fill=blue!80!black] (5,0) circle (4.5pt)
			node[anchor=south west]{$P_2$};
			--
			\filldraw[color=black, fill=blue!80!black] (0,3) circle (4.5pt)
			node[anchor=south west]{$P_5$};
			\filldraw[color=black, fill=blue!80!black] (0,-3) circle (4.5pt)
			node[anchor=north west]{$P_6$};
			--
			\filldraw[color=black, fill=blue!80!black] (0,1) circle (4.5pt)
			node[anchor=north east]{$P_8$};
			\filldraw[color=black, fill=blue!80!black] (20/7,3/7) circle (4.5pt)
			node[anchor=north]{$P_9$};
			\filldraw[color=black, fill=blue!80!black] (9.2/2,-57/50) circle (4.5pt)
			node[anchor=north]{$P_7$};
			\draw[color=black] (1,-5.4) node {(i)};
		\end{tikzpicture}\qquad\quad  
		\begin{tikzpicture}[scale=0.5]
					\draw [rotate around={0:(1,1)},style=thick] (1,1) ellipse (3.6055512754639887cm and 2cm);
					\draw [style=thick,domain=-4.5:8] plot(\x,{(--6-0*\x)/6});
					\draw [style=thick,domain=-2:8] plot(\x,{(-3.9846050478594517--0.931986882737109*\x)/3.122869641295142});
					\draw [style=thick,domain=-2:8] plot(\x,{(--21.97118556316587-1.9992549245813636*\x)/6.724561348994481});
					\draw [style=thick,domain=-0.7:6.5] plot(\x,{(--12.204629322521468-3.0668471331209055*\x)/3.147310038825793});
					\draw [style=thick,domain=-0.7:6.5] plot(\x,{(--5.963119519166904-3.061058903355351*\x)/-3.129797812939109});
					--				
					\filldraw[color=black, fill=blue!80!black] 
					(0.9015917877116715,2.9992549245813636) circle (3.5pt)
					node[anchor=south west]{$P_3$};
					\filldraw[color=black, fill=blue!80!black] 
					(4.055829998181412,2.061479812078704) circle (3.5pt)
					node[anchor=south]{$P_4$};
					--
					\filldraw[color=black, fill=blue!80!black] 
					(-2.6055512754639887,1) circle (3.5pt)
					node[anchor=south east]{$P_1$};
					\filldraw[color=black, fill=blue!80!black] 
					(4.60555127546399,1) circle (3.5pt)
					node[anchor=south west]{$P_2$};
					--
					\filldraw[color=black, fill=blue!80!black] 
					(4.04890182653745,-0.0675922085395424) circle (3.5pt)
					node[anchor=north]{$P_5$};
					\filldraw[color=black, fill=blue!80!black] 
					(0.9260321852423081,-0.9995790912766513) circle (3.5pt)
					node[anchor=north west]{$P_6$};
					--
					\filldraw[color=black, fill=blue!80!black] 
					(7.626153136706153,1) circle (4.5pt)
					node[anchor=south]{$P_9$};
					\filldraw[color=black, fill=blue!80!black] 
					(-2.0932994010831303,-0.027543570516784577) circle (3.5pt)
					node[anchor=north east]{$P_7$};
					\filldraw[color=black, fill=blue!80!black] 
					(2.9619226618813506,0.9915976690905064) circle (3.5pt)
					node[anchor=south]{$P_8$};
					\draw[color=black] (1,-3.2) node {(ii)};
				\end{tikzpicture}\vspace*{-0.5cm}
	\end{center}
	\caption{Two triples of concurrent lines have a common line
	\label{Figure16}}
\end{figure}
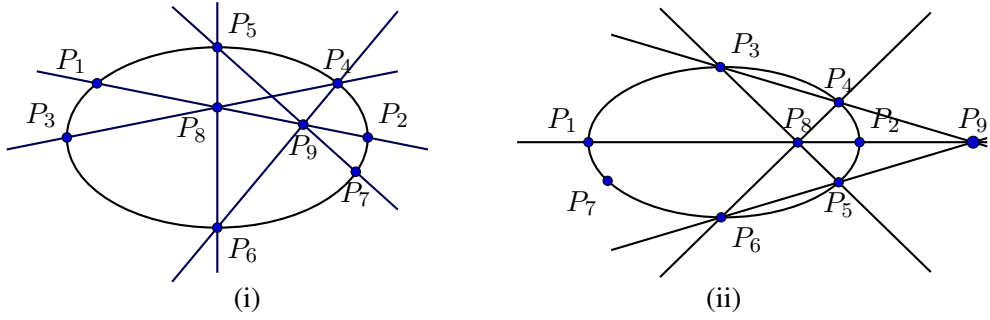
\begin{itemize}
	\item For the configuration (i), the B\'ezout decomposition 
	of $D$ with respect to $C$, $L_{12}, L_{34}, L_{56}$, 
	$L_{46}$ and $L_{57}$ is given by
	\[
	D = kC +pL_{12} + q(L_{46}+L_{56})+r(L_{34}+ L_{57}) + B(D).
	\]
	An application of Proposition~\ref{CountingMultiplicities} yields
	$$\qquad\quad  
	\begin{aligned}
		B(D)&\ge 0 &\Rightarrow&\quad d\ge 2k +p +2q +2r,\\
		B(D)\cdot C&\ge 2(m-k-p) +2(m-k-r) && \\
		&\quad +2(m-k-q-r) +(m-k-2q) 
		&\Leftrightarrow&\quad 2d\ge 7m-3k,\\
		B(D)\cdot L_{12}&\ge 2(m-k-p) +2(m-p-q-r)
		&\Leftrightarrow&\quad d\ge 4m-3p,\\
		B(D)\cdot L_{46}& \ge (m-k-q-r)+(m-k-2q)+(m-p-q-r)
		&\Leftrightarrow&\quad d\ge 3m-2q,\\
		B(D)\cdot L_{34}& \ge  (m-k-r) +(m-k-q-r) +(m-p-q-r)
		&\Leftrightarrow&\quad d\ge 3m-r.
	\end{aligned}
	$$
	It follows that $17d\ge 45m$ or $d\ge \frac{45}{17}m$.
	Consequently, we find 
	$\hat{\alpha}_{\mathbb{X}}\ge \frac{45}{17}>\frac{18}{7}$.
	
	\item For the configuration (ii), the B\'ezout decomposition 
	of $D$ with respect to $C$, $L_{12}, L_{34}, L_{56}$, $L_{35}$ 
	and $L_{46}$ is given by
	\[
	D = kC + pL_{12} + q(L_{34}+ L_{56}) + r (L_{35}+L_{46}) + B(D).
	\]
	By Proposition~\ref{CountingMultiplicities}, we find
	$$
	\begin{aligned}
		B(D)&\ge 0 &\Rightarrow&\quad d\ge 2k +p +2q +2r,\\
		B(D)\cdot C&\ge 2(m-k-p) +4(m-k-q-r) +(m-k)
		&\Leftrightarrow&\quad 2d\ge 7m-3k,\\
		B(D)\cdot L_{12}&\ge 2(m-k-p) +(m-p-2r) +(m-p-2q)
		&\Leftrightarrow&\quad d\ge 4m-3p,\\
		B(D)\cdot L_{34}& \ge 2(m-k-q-r) +(m-p-2q)
		&\Leftrightarrow&\quad d\ge 3m-2q,\\
		B(D)\cdot L_{35}& \ge 2(m-k-q-r) +(m-p-2r)
		&\Leftrightarrow&\quad d\ge 3m-2r.
	\end{aligned}
	$$
	Then we have $14d\ge 36m$ or $d\ge \frac{18}{7}m$, and hence  
	$\hat{\alpha}_{\mathbb{X}}\ge \frac{18}{7}$.
\end{itemize}

\smallskip\noindent 
\underline{\textbf{Subcase 2:}}\  
\textit{Two triples of concurrent lines do not have a common line.} 
Without loss of generality, we may assume that $\mathbb{X}=\{P_1,\dots,P_9\}$
has the configuration as in Figure~\ref{Figure17}.
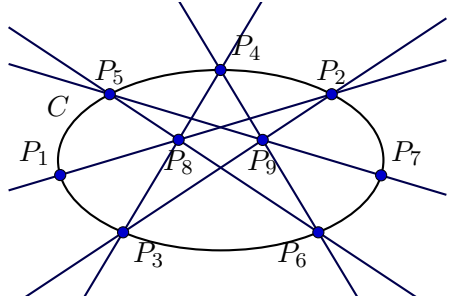
\begin{figure}[ht] 
	\begin{center}
		\begin{tikzpicture}[scale=0.6]
			\clip(-3.7,-2) rectangle (6,4.5);
			--
			\draw [rotate around={0:(1,1)},style=thick] (1,1) ellipse (3.6055512754639887cm and 2cm);
			\draw [style=thick,blue!30!black,domain=-3.7:6] plot(\x,{(--10.087081917223344-3.5999981971912587*\x)/2.168171891497525});
			\draw [style=thick,blue!30!black,domain=-3.7:6] plot(\x,{(-2.892894874303784-3.5999981971912587*\x)/-2.15848963905926});
			\draw [style=thick,blue!30!black,domain=-3.7:6] plot(\x,{(--0.7895137205823595--3.0626895260754132*\x)/4.622345382661781});
			\draw [style=thick,blue!30!black,domain=-3.7:6] plot(\x,{(--6.914892772733187-3.0626895260754132*\x)/4.622345382661781});
			\draw [style=thick,blue!30!black,domain=-3.7:6] plot(\x,{(--6.865325378347093-1.013609459563407*\x)/3.3882453747491437});
			\draw [style=thick,blue!30!black,domain=-3.7:6] plot(\x,{(-4.856859156232961-1.0109961396182705*\x)/-3.3921895118479175});
			--				
			\filldraw[color=black, fill=blue!80!black] 
			(-2.556120000651211,0.6699601231699771) circle (3.5pt)
			node[anchor=south east]{$P_1$};
			\filldraw[color=black, fill=blue!80!black] 
			(3.459014617383388,2.462689526075413) circle (3.5pt)
			node[anchor=south ]{$P_2$};
			\filldraw[color=black, fill=blue!80!black] 
			(-1.1633307652783924,-0.6) circle (3.5pt)
			node[anchor=north west]{$P_3$};
			\filldraw[color=black, fill=blue!80!black] 
			(0.9951588737808678,2.9999981971912586) circle (3.5pt)
			node[anchor=south west]{$P_4$};
			\filldraw[color=black, fill=blue!80!black] 
			(-1.4590146173833876,2.462689526075413) circle (3.5pt)
			node[anchor=south ]{$P_5$};
			\filldraw[color=black, fill=blue!80!black] 
			(3.163330765278393,-0.6) circle (3.5pt)
			node[anchor=north east]{$P_6$};
			\filldraw[color=black, fill=blue!80!black] 
			(4.554242697133779,0.6637954606670855) circle (3.5pt)
			node[anchor=south west]{$P_7$};
			\filldraw[color=black, fill=blue!80!black] 
			(0.06682510553547037,1.4516933864571426) circle (3.5pt)
			node[anchor=north]{$P_8$};
			\filldraw[color=black, fill=blue!80!black] 
			(1.929230757365756,1.449080066512006) circle (3.5pt)
			node[anchor=north]{$P_9$};
			\draw[color=black] (-2.6,2.2) node {$C$};
		\end{tikzpicture}	\vspace*{-0.5cm}
	\end{center}
	\caption{Two triples of concurrent lines do not have a common line \label{Figure17}}
\end{figure}
For a divisor $D$ of degree $d$ passing through $m\mathbb{X}$, 
the B\'ezout decomposition of $D$ with respect to $C$, 
$L_{12}$, $L_{57}$, $L_{23}$, $L_{56}$, $L_{34}$ 
and $L_{46}$ is given by
\[
D = kC + p(L_{12}+L_{57}) + q(L_{23}+ L_{56}) + r(L_{34}+L_{46}) + B(D).
\]
Since $B(D)\ge 0$, we have $d-(2k +2p +2q +2r)\ge 0$.
An application of Proposition~\ref{CountingMultiplicities} yields 
$$
\begin{aligned}
	B(D)\cdot C&\ge 2(m-k-p) +2(m-k-p-q) +2(m-k-q-r) +(m-k-2r),\\
	B(D)\cdot L_{12}&\ge (m-k-p) +(m-k-p-q) +(m-p-q-r),\\
	B(D)\cdot L_{23}& \ge (m-k-p-q) +(m-k-q-r) +(m-p-q-r),\\
	B(D)\cdot L_{34}& \ge (m-k-q-r) +(m-k-2r) + (m-p-q-r)
\end{aligned}
$$
and consequently
\begin{align}
	d&\ge 2k +2p +2q +2r, \label{Equ-421}\\
	2d&\ge 7m-3k, \label{Equ-422}\\
	d &\ge 3m-p+r, \label{Equ-423}\\ 
	d &\ge 3m-q, \label{Equ-424}\\ 
	d &\ge 3m +p-2r. \label{Equ-425}
\end{align}
Multiplying the inequalities \eqref{Equ-421}, \eqref{Equ-422},
\eqref{Equ-423}, \eqref{Equ-424}, \eqref{Equ-425} 
by $3; 2; 18; 6$ and $12$, respectively,
and then summing them up gives $d\ge \frac{122}{43}m$.
Hence we get $\hat{\alpha}_{\mathbb{X}}\ge \frac{122}{43}>\frac{18}{7}$,
as desired. 
\end{proof}

Next, we treat the case (3). Depending on the position of the points 
$P_1,\dots, P_6$, we can compute the Waldschmidt as follows.

\begin{proposition}
If $\mathbb{X}$ has the configuration as in (3), 
then the Waldschmidt constant is described as follows.
\begin{enumerate} 
\item[(3a)] If $P_i\notin L$ for $i=1,\dots, 6$, then $\hat{\alpha}_{\mathbb{X}}=3$;
\item[(3b)] If there is exactly one point among $\{P_1,\dots,P_6\}$ 
lies on  $L$, then $\frac{58}{23}\le \hat{\alpha}_{\mathbb{X}}\le 3$.
\item[(3c)] If there are exactly two points among $\{P_1,\dots,P_6\}$ lie on $L$, then $\frac{53}{21}\le \hat{\alpha}_{\mathbb{X}}\le 3$.
\end{enumerate}
\end{proposition}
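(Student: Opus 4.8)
The plan is to handle all three subcases by the Bézout-reduction technique used throughout the paper: the upper bound comes from a single explicit effective divisor, and each lower bound is obtained by writing an arbitrary divisor $D$ of degree $d$ vanishing along $m\mathbb{X}$ in its Bézout decomposition with respect to a carefully chosen list of irreducible curves, inserting the resulting multiplicities into the counting inequality \eqref{CountingMultiplicities} of Proposition~\ref{bezout decomposition}, and then eliminating the auxiliary coefficients by a positive linear combination. Since $L$ is a line and $C$ an irreducible conic, $L\cap C$ has at most two points, so the three possibilities (none, one, or two of $P_1,\dots,P_6$ on $L$) are exhaustive. The common upper bound is supplied by the reducible cubic $D'=m(C+L)$, of degree $3m$, which vanishes to order $\ge m$ at every $P_i$ (to order $2m$ at any point lying on both $C$ and $L$); hence $\hat\alpha_{\mathbb{X}}\le 3$ in each subcase.

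For (3a) no $P_i$ with $i\le 6$ lies on $L$, so I would take $D=kC+pL+B(D)$ and apply Proposition~\ref{bezout decomposition} to $C$ and to $L$. The six conic points each contribute $(m-k)$ to $B(D)\cdot C$ and the three line points each contribute $(m-p)$ to $B(D)\cdot L$, giving $d\ge 3m-k+p$ and $d\ge 3m+2k-2p$; adding twice the first inequality to the second cancels $k$ and $p$ and leaves $3d\ge 9m$. Thus $\hat\alpha_{\mathbb{X}}\ge 3$, and with the upper bound $\hat\alpha_{\mathbb{X}}=3$. For (3b) exactly one point, say $P_6\in C\cap L$, alters two of the counts: with the same decomposition one gets $d\ge 2k+p$ from $B(D)\ge 0$, then $2d+2k-p\ge 6m$ from $B(D)\cdot C$ (here $P_6$ contributes $(m-k-p)$) and $d-k+3p\ge 4m$ from $B(D)\cdot L$ (now $P_6,P_7,P_8,P_9$ lie on $L$). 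Combining these three with weights $5,7,4$ cancels $k$ and $p$ and yields $23d\ge 58m$, so $\hat\alpha_{\mathbb{X}}\ge\frac{58}{23}$.

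The real work is (3c), where $P_5,P_6\in C\cap L$ and $L$ carries five of the nine points. Here the plain decomposition $D=kC+pL+B(D)$ is too coarse: its inequalities $d\ge 2k+p$, $2d+2k\ge 6m$ and $d+4p\ge 5m$ optimize only to $d\ge\frac{29}{13}m<\frac52 m$. I would therefore enlarge the list by an auxiliary irreducible conic $Q$ through the four points $P_1,\dots,P_4$ lying on $C\setminus L$. Such a $Q$ always exists: these four points lie on the irreducible conic $C$, so no three are collinear, and the pencil of conics through them contains infinitely many irreducible members, one of which avoids the remaining five points. Writing $D=kC+pL+sQ+B(D)$ and applying Proposition~\ref{bezout decomposition} to $Q$ and to $L$ gives $d\ge 2m+p$ and $d\ge 5m-4p+2s$; combining four times the former with the latter and adding twice the trivial inequality $s\ge 0$ cancels $p$ and $s$ and produces $5d\ge 13m$, whence $\hat\alpha_{\mathbb{X}}\ge\frac{13}{5}>\frac{53}{21}$. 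With the upper bound this establishes $\frac{53}{21}\le\hat\alpha_{\mathbb{X}}\le 3$.

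I expect the main obstacle to be exactly this choice of auxiliary curve in (3c): one must select curves whose counting inequalities interlock so that the linear program clears $\frac52$, and one must certify that the auxiliary conic is irreducible and correctly positioned for \emph{every} type-(3c) configuration rather than only the generic one. The bare pair $\{C,L\}$ falls short of $\frac52$, which is why the extra conic (and the use of $s\ge 0$ to absorb its coefficient) is essential; verifying robustness of that choice across all degenerate positions of $P_1,\dots,P_9$ is the step that most needs care.
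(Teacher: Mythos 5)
Your proof is correct, and for parts (3a) and (3b) — as well as for the upper bound $\hat{\alpha}_{\mathbb{X}}\le 3$, which you get from $m(C+L)$ while the paper invokes the cubic through any nine points (the same cubic, in effect) — your argument coincides with the paper's: identical decomposition $D=kC+pL+B(D)$, identical counting inequalities, and even the same weights $5,7,4$ producing $23d\ge 58m$ in (3b). The genuine divergence is in (3c). There the paper sets $\mathcal{H}=\bigcup_{3\le i<j\le 6}L_{ij}$ and splits into four subcases according to how $P_7,P_8,P_9$ sit relative to $\mathcal{H}$, choosing configuration-dependent auxiliary curves in each: an irreducible conic through \emph{five} of the points (which in its subcase (1.ii) necessarily meets $L$ in a second marked point, weakening the count), or the lines $L_{ij}$ themselves; this yields the lower bounds $\frac{13}{5}$, $\frac{53}{21}$, $\frac{13}{5}$, $\frac{59}{23}$, whose minimum $\frac{53}{21}$ is what the proposition records. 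You instead use a single configuration-independent irreducible conic $Q$ through the four conic-only points that \emph{avoids} the remaining five. Your existence argument is sound: the four points lie on the irreducible conic $C$, so no three are collinear, the pencil through them has at most three reducible members, and each of the five remaining (distinct) points lies on exactly one member of the pencil, so infinitely many admissible irreducible members survive. Your two counting inequalities check out against Proposition~\ref{bezout decomposition} — from $B(D)\cdot Q$ one gets $2(d-2k-p-2s)\ge 4(m-k-s)$, i.e.\ $d\ge 2m+p$, and from $B(D)\cdot L$ one gets $d-2k-p-2s\ge 2(m-k-p)+3(m-p)$, i.e.\ $d\ge 5m-4p+2s$ — and the combination $5d\ge 13m+2s\ge 13m$ is valid. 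So you obtain the uniform bound $\hat{\alpha}_{\mathbb{X}}\ge\frac{13}{5}$ with no case analysis at all, which is strictly \emph{stronger} than the stated $\frac{53}{21}$ and is moreover sharp: the paper's subcase 3 exhibits a type-(3c) configuration with $\hat{\alpha}_{\mathbb{X}}=\frac{13}{5}$ exactly, via an explicit upper-bound divisor. What the paper's longer route buys is finer per-configuration information (that exact value in its subcase 3, and differentiated bounds elsewhere); what yours buys is brevity, uniformity, and an improvement of the constant in the paper's weakest subcases. The robustness worry you flag at the end in fact dissolves: your pencil argument uses only that the nine points are distinct and that the four chosen points lie on the irreducible $C$ off $L$, which holds in every type-(3c) configuration, degenerate or not.
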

\begin{proof}
Since any set of nine points is contained in a cubic curve, 
we have $\hat{\alpha}_{\mathbb{X}}\le 3.$ 
Let $D$ be a divisor of degree $d$ passing through $m\mathbb{X}$
and let  
$$
D= kC+pL + B(D)
$$
be the Bézout decomposition of $D$. 
 
In the case of (3a), by Proposition~\ref{bezout decomposition}, we have
$B(D)\cdot C = 2(d-2k-p) \ge 6(m-k)$ 
and $B(D) \cdot L=d-2k-p\ge 3(m-p).$ 
These inequalities imply $d\ge 3m.$ 
Thus, the Waldschmidt constant of $\mathbb{X}$ is
 $\hat{\alpha}_{\mathbb{X}}=3.$
 
For (3b), we apply Proposition~\ref{bezout decomposition} to $D$ and we find the following inequalities
\begin{align*}
B(D)&\ge 0 &\Rightarrow&\quad d\ge 2k+p,\\
B(D)\cdot C &= 2(d-2k-p) \ge 5(m-k)+(m-k-p)&\Leftrightarrow&\quad 2d\ge 6m-2k+p,\\
B(D)\cdot L &= d-2k-p \ge (m-k-p)+3(m-p) &\Leftrightarrow&\quad d\ge 4m+k-3p.
\end{align*}
Multiplying these last inequalities by $5, 7$ and $4$, respectively,
	and then summing them up gives $d\ge \frac{58}{23}m$.
Hence, we obtain $\hat{\alpha}_{\mathbb{X}} \ge \frac{58}{23}>\frac{5}{2}$.

To prove (3c), we may assume that $P_1,P_2\in L$ and let 
$\mathcal{H} := \bigcup_{3\le i<j\le 6} L_{ij}$ with $L_{ij}=P_iP_j$.
Furthermore, let $D'$ be a divisor of degree $d$ passing through $m\mathbb{X}$.
Consider the following subcases.

\smallskip\noindent 
\underline{\textbf{Subcase 1:}}\  
\textit{At least one of the points $P_7,P_8,P_9$ does not belong to 
	$\mathcal{H}$, say $P_7$.} 
In this case the five points $P_3,\dots,P_7$ are in general linear position
and defines an irreducible conic $C'$, see Figure~\ref{Figure18}. 
\begin{figure}[ht] 
	\begin{center}
		\begin{tikzpicture}[scale=0.55]
			\draw [rotate around={0:(1,1)},style=thick] (1,1) ellipse (3.6055512754639887cm and 2cm);
			\draw [rotate around={90:(1,1)},style=thick] (1,1) ellipse (2.828427124746189cm and 2cm);
			\draw [style=thick,domain=-5.514589872833051:7.0098818212186] plot(\x,{(--6-0*\x)/6});
			\draw[color=black] (-2.532901454209406,2.519591976812507) node {$C$};
			\draw[color=black] (-0.5299351882637512,0.357298848802995) node {$C'$};
			\draw[color=black] (-4.4888626265925593,0.6987135532255495) node {$L$};
			--				
			\filldraw[color=black, fill=blue!80!black] 
			(-0.5374122295716144,2.8090680674665816) circle (3.5pt)
			node[anchor=south east]{$P_3$};
			\filldraw[color=black, fill=blue!80!black] 
			(2.5374122295716144,2.809068067466581) circle (3.5pt)
			node[anchor=south west]{$P_4$};
			--
			\filldraw[color=black, fill=blue!80!black] 
			(-2.6055512754639887,1) circle (3.5pt)
			node[anchor=south east]{$P_1$};
			\filldraw[color=black, fill=blue!80!black] 
			(4.60555127546399,1) circle (3.5pt)
			node[anchor=south west]{$P_2$};
			--
			\filldraw[color=black, fill=blue!80!black] 
			(2.5374122295716144,-0.8090680674665813) circle (3.5pt)
			node[anchor=north west]{$P_5$};
			\filldraw[color=black, fill=blue!80!black] 
			(-0.5374122295716144,-0.8090680674665812) circle (3.5pt)
			node[anchor=north east]{$P_6$};
			--
			\filldraw[color=black, fill=blue!80!black] (-1,1) circle (3.5pt)
			node[anchor=south east]{$P_7$};
			\filldraw[color=black, fill=blue!80!black] (1,1) circle (3.5pt)
			node[anchor=south east]{$P_8$};
			\filldraw[color=black, fill=blue!80!black] (2.5374122295716144,1) circle (3.5pt)
			node[anchor=south]{$P_9$};
			\draw[color=black] (1,-3) node {(1.i)};
		\end{tikzpicture}\qquad 
		\begin{tikzpicture}[scale=0.55]
			\draw [rotate around={0:(1,1)},style=thick] (1,1) ellipse (3.6055512754639887cm and 2cm);
			\draw [rotate around={90:(1,1)},style=thick] (1,1) ellipse (2.828427124746189cm and 2cm);
			\draw [style=thick,domain=-5.514589872833051:7.0098818212186] plot(\x,{(--6-0*\x)/6});
			\draw[color=black] (-2.532901454209406,2.519591976812507) node {$C$};
			\draw[color=black] (-0.5299351882637512,0.357298848802995) node {$C'$};
			\draw[color=black] (-4.4888626265925593,0.6987135532255495) node {$L$};
			--				
			\filldraw[color=black, fill=blue!80!black] 
			(-0.5374122295716144,2.8090680674665816) circle (3.5pt)
			node[anchor=south east]{$P_3$};
			\filldraw[color=black, fill=blue!80!black] 
			(2.5374122295716144,2.809068067466581) circle (3.5pt)
			node[anchor=south west]{$P_4$};
			--
			\filldraw[color=black, fill=blue!80!black] 
			(-2.6055512754639887,1) circle (3.5pt)
			node[anchor=south east]{$P_1$};
			\filldraw[color=black, fill=blue!80!black] 
			(4.60555127546399,1) circle (3.5pt)
			node[anchor=south west]{$P_2$};
			--
			\filldraw[color=black, fill=blue!80!black] 
			(2.5374122295716144,-0.8090680674665813) circle (3.5pt)
			node[anchor=north west]{$P_5$};
			\filldraw[color=black, fill=blue!80!black] 
			(-0.5374122295716144,-0.8090680674665812) circle (3.5pt)
			node[anchor=north east]{$P_6$};
			--
			\filldraw[color=black, fill=blue!80!black] (-1,1) circle (3.5pt)
			node[anchor=south east]{$P_7$};
			\filldraw[color=black, fill=blue!80!black] (3,1) circle (3.5pt)
			node[anchor=south east]{$P_8$};
			\filldraw[color=black, fill=blue!80!black] (6,1) circle (3.5pt)
			node[anchor=south west]{$P_9$};
			\draw[color=black] (1,-3) node {(1.ii)};
		\end{tikzpicture}\vspace*{-0.5cm}
	\end{center}
	\caption{At least one of $P_7,P_8,P_9$ does not belong to 
		$\mathcal{H}$ \label{Figure18}}
\end{figure}
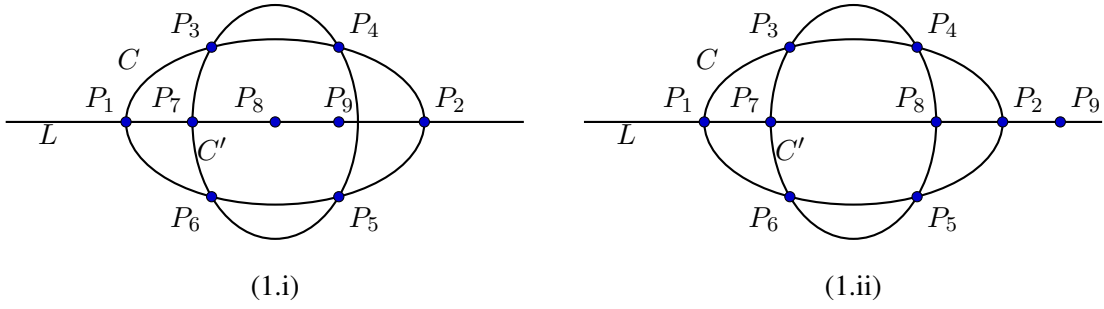

The B\'{e}zout decomposition of the divisor $D'$ with respect to 
$C, C'$ and $L$ is given by 
\begin{equation}\label{Equ-SixPointsOnConicCC}
	D'= kC +pC' +qL + B(D').	
\end{equation}

For the configuration (1.i), 
we apply Proposition~\ref{bezout decomposition} to $D'$ and receive
\begin{align*}
	B(D')&\ge 0 &\Rightarrow&\quad d\ge 2k+2p+q,\\
	B(D')\cdot C &= 2(d-2k-2p-q) \ge 4(m-k-p)+2(m-k-q)
	&\Leftrightarrow&\quad 2d\ge 6m-2k,\\
	B(D')\cdot C' &= 2(d-2k-2p-q) \ge 4(m-k-p)+(m-p-q)
	&\Leftrightarrow&\quad 2d\ge 5m-p+q,\\
	B(D')\cdot L &= d-2k-2p-q \ge 2(m-k-q) + (m-p-q) + 2(m-q) &\Leftrightarrow&\quad d\ge 5m+p-4q.
\end{align*}
Eliminating $k,p,q$ from the above inequalities, we obtain $10d \ge 26m$ or $d\ge \frac{13}{5}m$,
and subsequently $\hat{\alpha}_\mathbb{X} \ge \frac{13}{5} >\frac{5}{2}$.

For the configuration (1.ii), we also apply Proposition~\ref{bezout decomposition}
to the divisor $D'$ as in \eqref{Equ-SixPointsOnConicCC} and we have
\begin{align*}
	B(D')&\ge 0 &\Rightarrow&\quad d\ge 2k+2p+q,\\
	B(D')\cdot C &= 2(d-2k-2p-q) \ge 4(m-k-p)+2(m-k-q)&\Leftrightarrow&\quad 2d\ge 6m-2k,\\
	B(D')\cdot C' &= 2(d-2k-2p-q) \ge 4(m-k-p)+2(m-p-q)&\Leftrightarrow&\quad 2d\ge 6m-2p,\\
	B(D')\cdot L &= d-2k-2p-q \ge 2(m-k-q) + 2(m-p-q) + (m-q) &\Leftrightarrow&\quad d\ge 5m-4q.
\end{align*}
The above inequalities yields $21d \ge 53m$ or $d\ge \frac{53}{21}m$,
and subsequently $\hat{\alpha}_\mathbb{X} \ge \frac{53}{21} >\frac{5}{2}$.

\smallskip\noindent 
\underline{\textbf{Subcase 2:}}\  
\textit{All $P_7,P_8,P_9$ belong to $\mathcal{H}$ and none of them is
the intersection point of a pair of lines $L_{ij}$ with $3\le i<j\le 6$.}
Suppose that $P_7$, $P_8$, $P_9$ lie on lines 
$L_{ij}$, $L_{kl}$, $L_{st}$ in $\mathcal{H}$, respectively.
Since $P_i,P_j,P_k,P_l,P_s,P_t\in\{P_3,P_4,P_5,P_6\}$, $i<j$, $k<l$ and $s<t$, 
we may assume without loss of generality that $P_j=P_l$
and $P_e\in \{P_3,P_4,P_5,P_6\}\setminus\{P_i,P_j,P_k\}$.
Then $P_i,P_k,P_e,P_7,P_8$ are in general linear position
and define an irreducible conic $C'$. 
Thus the configuration of $\mathbb{X}$ can be described as in Figure~\ref{Figure019}.
\begin{figure}[ht] 
	\begin{center}
		\begin{tikzpicture}[scale=0.6]
			\draw [rotate around={0:(1,1)},style=thick] (1,1) ellipse (3.6055512754639887cm and 2cm);
			\draw [style=thick,domain=-5:8.2] plot(\x,{(--6-0*\x)/6});
			\draw [style=thick,dashed,domain=-0.65:0.25] plot(\x,{(-1.377408556381528-3.7488820648669248*\x)/-0.4910277663518987});
			\draw [style=thick,dashed,domain=-1.1:3.2] plot(\x,{(--7.173939784619538-3.755385986231659*\x)/2.4334656654191167});
			\draw [rotate around={-1.1:(0.78,0.059)},style=thick] (0.78,0.059) ellipse (3.861834171450144cm and 0.9566763517957737cm);
			\draw[style=thick,dashed,domain=-3.5:8] plot(\x,{(-3.4701491094293435--1.1531884015853833*\x)/4.871594954090374});
			\draw[color=black] (-2.532901454209406,2.519591976812507) node {$C$};
			\draw[color=black] (-3.2,-0.5) node {$C'$};
			\draw[color=black] (-4.5,0.6) node {$L$};
			--				
			\filldraw[color=black, fill=blue!80!black] 
			(-0.47546008982930765,-0.8248723894960128) circle (3.5pt)
			node[anchor=north east]{$P_3$};
			\filldraw[color=black, fill=blue!80!black] 
			(2.449033341941708,-0.831376310860747) circle (3.5pt)
			node[anchor=north east]{$P_4$};
			--
			\filldraw[color=black, fill=blue!80!black] 
			(-2.6055512754639887,1) circle (3.5pt)
			node[anchor=south east]{$P_1$};
			\filldraw[color=black, fill=blue!80!black] 
			(4.60555127546399,1) circle (3.5pt)
			node[anchor=south west]{$P_2$};
			--
			\filldraw[color=black, fill=blue!80!black] 
			(0.015567676522591056,2.924009675370912) circle (3.5pt)
			node[anchor=south west]{$P_6$};
			\filldraw[color=black, fill=blue!80!black] 
			(4.396134864261066,0.32831601208937056) circle (3.5pt)
			node[anchor=north west]{$P_5$};
			--
			\filldraw[color=black, fill=blue!80!black] 
			(-0.23643869684150579,1) circle (3.5pt)
			node[anchor=south east]{$P_7$};
			\filldraw[color=black, fill=blue!80!black]
			(1.2623134177366542,1) circle (3.5pt)
			node[anchor=south west]{$P_8$};
			\filldraw[color=black, fill=blue!80!black] 
			(7.233635069561602,1) circle (3.5pt)
			node[anchor=south]{$P_9$};
		\end{tikzpicture}\vspace*{-0.3cm}
	\end{center}
	\caption{$P_3,P_4,P_5,P_7,P_8$ lie on an irreducible conic $C'$ and 
		$P_6\notin C'$ \label{Figure019}}
\end{figure}
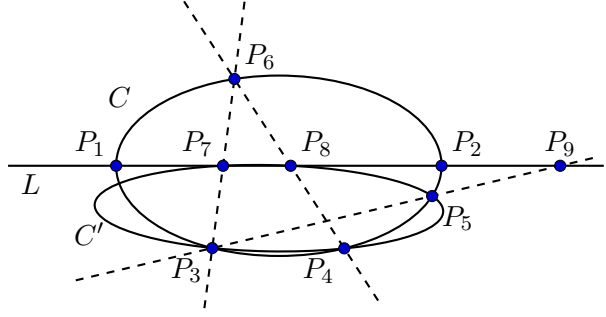

Consider a divisor $D'$ of degree $d$ passing through $m\mathbb{X}$ 
with the Bézout decomposition given by 
\[
D' = kC + pC' + qL + B(D').
\]
Then we have  
\begin{align}
B(D')&\ge 0 &\Rightarrow&\quad d\ge 2k+2p+q, \label{equ051}\\
B(D')\cdot C & \ge 2(m-k-q) +3(m-k-p) +(m-k) 
&\Leftrightarrow&\quad 2d\ge 6m-2k+p, \label{equ052}\\
B(D')\cdot C' & \ge 3(m-k-p)+2(m-p-q)
&\Leftrightarrow&\quad 2d\ge 5m+k-p, \label{equ053}\\
B(D')\cdot L & \ge 2(m-k-q) + 2(m-p-q) + (m-q) 
&\Leftrightarrow&\quad d\ge 5m-4q. \label{equ054}
\end{align}
Multiplying the inequalities \eqref{equ051}, \eqref{equ052},
\eqref{equ053}, \eqref{equ054} by $4; 16; 24; 1$, respectively,
and then summing them up provides $85d \ge 221m$ or $d\ge \frac{13}{5}m$,
and hence $\hat{\alpha}_\mathbb{X} \ge \frac{13}{5} >\frac{5}{2}$.

\smallskip\noindent 
\underline{\textbf{Subcase 3:}}\  
\textit{All $P_7,P_8,P_9$ belong to $\mathcal{H}$ and exactly one of them, 
	say $P_7$, is the intersection point of a pair of lines $L_{ij}$ 
	with $3\le i<j\le 6$.} 
In this case $\mathbb{X}$ can be assumed to have one of the following
configurations (see Figure~\ref{Figure021}).
\begin{figure}[ht] 
	\begin{center}
		\begin{tikzpicture}[scale=0.55]
			\draw [rotate around={0:(1,1)},style=thick] (1,1) ellipse (3.6055512754639887cm and 2cm);
			\draw [style=thick,domain=-4.5:6] plot(\x,{(--6-0*\x)/6});
			\draw [style=thick] (-0.5374122295716144,-2.271594375250676) -- (-0.5374122295716144,4.556699713200414);
			\draw [style=thick] (2.537412229571615,-2.271594375250676) -- (2.537412229571615,4.556699713200414);
			\draw [style=thick,domain=-1.9:3.7] plot(\x,{(--6.692960594076392-3.618136134933163*\x)/3.0748244591432288});
			\draw [style=thick,domain=-1.9:3.7] plot(\x,{(--0.5433116757899339-3.618136134933162*\x)/-3.0748244591432288});
			
			\draw[color=black] (-2.532901454209406,2.519591976812507) node {$C$};
			\draw[color=black] (-4,0.5) node {$L$};
			--				
			\filldraw[color=black, fill=blue!80!black] 
			(-0.5374122295716144,2.8090680674665816) circle (3.5pt)
			node[anchor=south west]{$P_3$};
			\filldraw[color=black, fill=blue!80!black] 
			(2.5374122295716144,2.809068067466581) circle (3.5pt)
			node[anchor=south east]{$P_4$};
			--
			\filldraw[color=black, fill=blue!80!black] 
			(-2.6055512754639887,1) circle (3.5pt)
			node[anchor=south east]{$P_1$};
			\filldraw[color=black, fill=blue!80!black] 
			(4.60555127546399,1) circle (3.5pt)
			node[anchor=south west]{$P_2$};
			--
			\filldraw[color=black, fill=blue!80!black] 
			(2.5374122295716144,-0.8090680674665813) circle (3.5pt)
			node[anchor=north east]{$P_6$};
			\filldraw[color=black, fill=blue!80!black] 
			(-0.5374122295716144,-0.8090680674665812) circle (3.5pt)
			node[anchor=north west]{$P_5$};
			--
			\filldraw[color=black, fill=blue!80!black] 
			(-0.5374122295716144,1) circle (3.5pt)
			node[anchor=south east]{$P_8$};
			\filldraw[color=black, fill=blue!80!black] (1,1) circle (3.5pt)
			node[anchor=south]{$P_7$};
			\filldraw[color=black, fill=blue!80!black] 
			(2.537412229571615,1) circle (3.5pt)
			node[anchor=south west]{$P_9$};
			\draw[color=black] (1,-3) node {(2.i)};
		\end{tikzpicture}\qquad\
		\begin{tikzpicture}[scale=0.55]
			\draw [rotate around={0:(1,1)},style=thick] (1,1) ellipse (3.6055512754639887cm and 2cm);
			\draw [style=thick,domain=-4.5:8] plot(\x,{(--6-0*\x)/6});
			\draw [style=thick,domain=-2:8] plot(\x,{(-3.9846050478594517--0.931986882737109*\x)/3.122869641295142});
			\draw [style=thick,domain=-2:8] plot(\x,{(--21.97118556316587-1.9992549245813636*\x)/6.724561348994481});
			\draw [style=thick] (4.04912177702199,-2) -- (4.04912177702199,4.25);
			\draw [style=thick] (0.9260321852423081,-2) -- (0.9260321852423081,4.25);
			--
			\draw[color=black] (-2.532901454209406,2.519591976812507) node {$C$};
			\draw[color=black] (-4,0.5) node {$L$};
			--				
			\filldraw[color=black, fill=blue!80!black] 
			(0.9015917877116715,2.9992549245813636) circle (3.5pt)
			node[anchor=south east]{$P_3$};
			\filldraw[color=black, fill=blue!80!black] 
			(4.055829998181412,2.061479812078704) circle (3.5pt)
			node[anchor=south west]{$P_4$};
			--
			\filldraw[color=black, fill=blue!80!black] 
			(-2.6055512754639887,1) circle (3.5pt)
			node[anchor=south east]{$P_1$};
			\filldraw[color=black, fill=blue!80!black] 
			(4.60555127546399,1) circle (3.5pt)
			node[anchor=south west]{$P_2$};
			--
			\filldraw[color=black, fill=blue!80!black] 
			(4.04890182653745,-0.0675922085395424) circle (3.5pt)
			node[anchor=north west]{$P_6$};
			\filldraw[color=black, fill=blue!80!black] 
			(0.9260321852423081,-0.9995790912766513) circle (3.5pt)
			node[anchor=north east]{$P_5$};
			--
			\filldraw[color=black, fill=blue!80!black] 
			(7.626153136706153,1) circle (3.5pt)
			node[anchor=south]{$P_7$};
			\filldraw[color=black, fill=blue!80!black] 
			(0.9138109958428614,1) circle (3.5pt)
			node[anchor=south east]{$P_8$};
			\filldraw[color=black, fill=blue!80!black] 
			(4.052375857474351,1) circle (3.5pt)
			node[anchor=south east]{$P_9$};
			\draw[color=black] (1,-3) node {(2.ii)};
		\end{tikzpicture}\vspace*{-0.5cm}
	\end{center}
	\caption{All $P_7,P_8,P_9$ belong to $\mathcal{H}$ and only $P_7$ is the intersection point of a pair of lines $L_{ij}$ with $3\le i<j\le 6$ \label{Figure020}}
\end{figure}
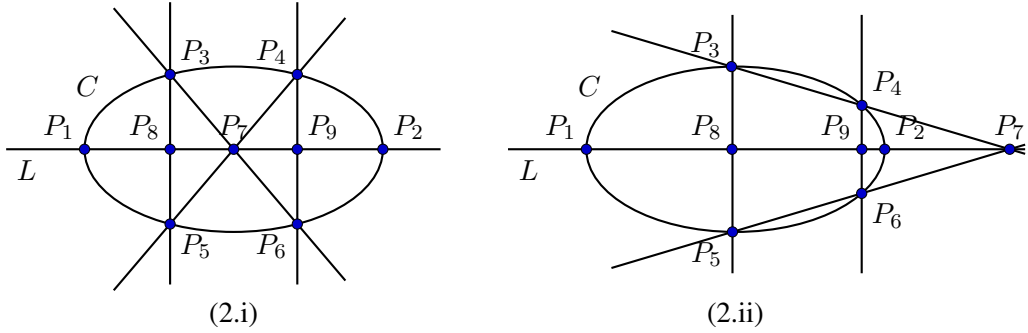

We shall show that $\hat{\alpha}_\mathbb{X} = \frac{13}{5} >\frac{5}{2}$
for the configuration (2.i) (similar for (2.ii)).
In this case, the B\'{e}zout decomposition of the divisor $D'$ with respect to 
$C$, $L$, $L_{35}, L_{46}$, $L_{36}$ and $L_{45}$ is given by 
\[
D'= kC +pL +q(L_{35}+L_{46})+r(L_{36}+L_{45}) + B(D').	
\]
By Proposition~\ref{bezout decomposition}, we have
\begin{align*}
	B(D')&\ge 0 &\Rightarrow&\quad d\ge 2k+p+2q+2r,\\
	B(D')\cdot C & \ge 2(m-k-p)+ 4(m-k-q-r)
	&\Leftrightarrow&\quad 2d\ge 6m-2k,\\
	B(D')\cdot L & \ge 2(m-k-p) +2(m-p-q) +(m-p-2r)
	&\Leftrightarrow&\quad d\ge 5m -4p,\\
	B(D')\cdot L_{35} &\ge 2(m-k-q-r)+(m-p-q), 
	&\Leftrightarrow&\quad d\ge 3m-q\\ 
	B(D')\cdot L_{36} &\ge 2(m-k-q-r)+(m-p-2r) ,
	&\Leftrightarrow&\quad d\ge 3m -2r.
\end{align*}
These inequalities yields $25d\ge 65m$ or $d\ge \frac{13}{5}m$,
and consequently $\hat{\alpha}_\mathbb{X} \ge \frac{13}{5}$.
Moreover, the divisor 
$
D'' = m(L_{36}+L_{45}+2(L_{35}+L_{46})+3L_{12}+2C)
$
is of degree $13m$ and vanishes along $\mathbb{X}$ of multiplicity $5m$,
and this implies $\hat{\alpha}_\mathbb{X} \le \frac{13}{5}$.
Hence $\hat{\alpha}_\mathbb{X} =\frac{13}{5}$, as wanted.

\smallskip\noindent 
\underline{\textbf{Subcase 4:}}\  
\textit{All $P_7,P_8,P_9$ belong to $\mathcal{H}$ and two of them, 
	say $P_7,P_8$, are the intersection points of pairs of lines 
	$L_{ij}$ with $3\le i<j\le 6$.} 
In this case $\mathbb{X}$ has the following configuration 
(see Figure~\ref{Figure021}). 
\begin{figure}[ht] 
	\begin{center}
	\begin{tikzpicture}[scale=0.55]
		\draw [rotate around={0:(1,1)},style=thick] (1,1) ellipse (3.6055512754639887cm and 2cm);
		\draw [style=thick,domain=-4.5:8] plot(\x,{(--6-0*\x)/6});
		\draw [style=thick,domain=-2:8] plot(\x,{(-3.9846050478594517--0.931986882737109*\x)/3.122869641295142});
		\draw [style=thick,domain=-2:8] plot(\x,{(--21.97118556316587-1.9992549245813636*\x)/6.724561348994481});
		\draw [style=thick,domain=-0.7:6.5] plot(\x,{(--12.204629322521468-3.0668471331209055*\x)/3.147310038825793});
		\draw [style=thick,domain=-0.7:6.5] plot(\x,{(--5.963119519166904-3.061058903355351*\x)/-3.129797812939109});
		\draw [style=thick] (0.9260321852423081,-2) -- (0.9260321852423081,4.25);
		--
		\draw[color=black] (-2.532901454209406,2.519591976812507) node {$C$};
		\draw[color=black] (-4,0.5) node {$L$};
		--				
		\filldraw[color=black, fill=blue!80!black] 
		(0.9015917877116715,2.9992549245813636) circle (3.5pt)
		node[anchor=south west]{$P_3$};
		\filldraw[color=black, fill=blue!80!black] 
		(4.055829998181412,2.061479812078704) circle (3.5pt)
		node[anchor=south]{$P_4$};
		--
		\filldraw[color=black, fill=blue!80!black] 
		(-2.6055512754639887,1) circle (3.5pt)
		node[anchor=south east]{$P_1$};
		\filldraw[color=black, fill=blue!80!black] 
		(4.60555127546399,1) circle (3.5pt)
		node[anchor=south west]{$P_2$};
		--
		\filldraw[color=black, fill=blue!80!black] 
		(4.04890182653745,-0.0675922085395424) circle (3.5pt)
		node[anchor=north]{$P_6$};
		\filldraw[color=black, fill=blue!80!black] 
		(0.9260321852423081,-0.9995790912766513) circle (3.5pt)
		node[anchor=north west]{$P_5$};
		--
		\filldraw[color=black, fill=blue!80!black] 
		(7.626153136706153,1) circle (3.5pt)
		node[anchor=south]{$P_7$};
		\filldraw[color=black, fill=blue!80!black] 
		(0.9138109958428614,1) circle (3.5pt)
		node[anchor=south east]{$P_9$};
		\filldraw[color=black, fill=blue!80!black] 
		(2.9619226618813506,0.9915976690905064) circle (3.5pt)
		node[anchor=south]{$P_8$};
	\end{tikzpicture}
	\vspace*{-0.5cm}
	\end{center}
	\caption{All $P_7,P_8,P_9$ belong to $\mathcal{H}$ and
	$P_7,P_8$ are the intersection points of pairs of lines 
	$L_{ij}$ with $3\le i<j\le 6$.	 \label{Figure021}}
\end{figure}
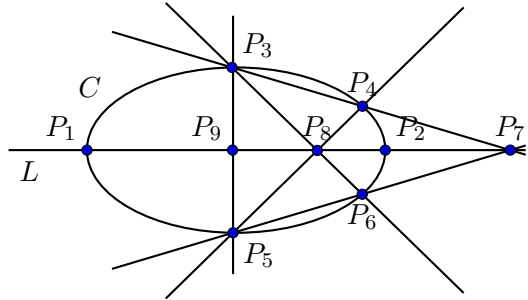
Consider the divisor $D'$ of degree $d$ passing through $m\mathbb{X}$.
Then the B\'{e}zout decomposition of $D'$ with respect to 
$C$, $L$, $L_{36}, L_{45}$, $L_{34}$, $L_{56}$ and $L_{35}$ is given by 
\[
D'= kC +pL +q(L_{36}+L_{45})+r(L_{34}+L_{56}) + sL_{35}+ B(D').	
\]
By Proposition~\ref{bezout decomposition}, we have
\begin{align*}
	&B(D')\ge 0 \hspace*{8.7cm}\Rightarrow d\ge 2k\!\!\!
	&&\!\!\!\!\!\!\!\!\!+p+2q+2r+s,\\
	&B(D')\cdot C  \ge 2(m-k-p)+ 2(m-k-q-r-s) +2(m-k-q-r)
		&\Leftrightarrow&\quad 2d\ge 6m-2k,\\
	&B(D')\cdot L \ge 2(m-k-p) +(m-p-2r)+(m-p-2q) +(m-p-s)
		&\Leftrightarrow&\quad d\ge 5m -4p,\\
	&B(D')\cdot L_{34} \ge (m-k-q-r-s)+(m-k-q-r)+(m-p-2r) 
	&\Leftrightarrow&\quad d\ge 3m -2r,\\
	&B(D')\cdot L_{35} \ge 2(m-k-q-r-s)+(m-p-s) 
		&\Leftrightarrow&\quad d\ge 3m-2s,\\ 
	&B(D')\cdot L_{36} \ge (m-k-q-r-s)+(m-k-q-r)+(m-p-2q) 
		&\Leftrightarrow&\quad d\ge 3m -2q.
\end{align*}
These inequalities yields $23d\ge 59m$ or $d\ge \frac{59}{23}m$,
and therefore $\hat{\alpha}_\mathbb{X} \ge \frac{59}{23} >\frac{5}{2}$.
\end{proof}

Finally, we treat the case (4) that five points $P_1,\dots,P_5$ lie on $C$ 
and four others  $P_6,\dots,P_9$ lie on a line~$L$. For this case, 
the line $L$ contains none of the points $P_1,\dots,P_5$, because otherwise
we would have that $\mathbb{X}=\{P_1,\dots,P_9\}$ is contained in
the union of three lines.

\begin{proposition}
If $\mathbb{X}$ has the configuration as in (4), then 
$\hat{\alpha}_{\mathbb{X}}\ge \frac{14}{5} >\frac{5}{2}.$
\end{proposition}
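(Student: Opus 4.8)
The plan is to establish the lower bound directly by a B\'ezout decomposition, since for the classification we only need $\hat{\alpha}_{\mathbb{X}}>\frac{5}{2}$, while the matching upper bound $\hat{\alpha}_{\mathbb{X}}\le 3$ is already supplied by the reducible cubic $C+L$. Let $C$ be the given irreducible conic containing $P_1,\dots,P_5$ and $L$ the line containing $P_6,\dots,P_9$. For an arbitrary divisor $D$ of degree $d$ passing through $m\mathbb{X}$, I would write its B\'ezout decomposition with respect to $C$ and $L$,
\[
D = kC + pL + B(D), \qquad k,p\in\mathbb{N},
\]
and then extract the constraints coming from $B(D)\ge 0$ and from intersecting the reduction $B(D)$ with each of $C$ and $L$.

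The crucial step is the local multiplicity bookkeeping, and this is exactly where the hypotheses of case (4) enter. By assumption none of $P_1,\dots,P_5$ lies on $L$, and since $L$ meets the irreducible conic $C$ in at most two points while the four points $P_6,\dots,P_9$ are distinct, none of these four line points lies on $C$. Hence $kC+pL$ has multiplicity exactly $k$ at each $P_i$ with $i\le 5$ and exactly $p$ at each $P_i$ with $i\ge 6$, so $B(D)$ vanishes to order at least $m-k$ at the five conic points and at least $m-p$ at the four line points. Applying Proposition~\ref{bezout decomposition} to the curves $C$ and $L$ then yields
\[
B(D)\cdot C = 2(d-2k-p)\ \ge\ 5(m-k), \qquad B(D)\cdot L = d-2k-p\ \ge\ 4(m-p),
\]
which rearrange to $2d+k-2p\ge 5m$ and $d-2k+3p\ge 4m$.

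To conclude I would add twice the first inequality to the second; the $k$-terms cancel and one is left with $5d-p\ge 14m$. Since $p\ge 0$, this forces $d\ge \frac{14}{5}m$, and as this holds for every $m$ we obtain $\hat{\alpha}_{\mathbb{X}}=\inf_{m\ge 1}\frac{\alpha_{m\mathbb{X}}}{m}\ge \frac{14}{5}>\frac{5}{2}$. I expect the only genuinely delicate point to be the multiplicity bookkeeping above—specifically, confirming that all five conic points avoid $L$ and all four line points avoid $C$, so that the coefficients $5$ and $4$ in the two intersection estimates are the correct ones; the closing linear combination is then routine. (One may note in passing that eliminating $p$ instead of $k$, by weighting the two inequalities $3:2$ and invoking $k\ge 0$, even produces the sharper bound $d\ge\frac{23}{8}m$; either weighting already exceeds $\frac{5}{2}$, which is all that is needed here.)
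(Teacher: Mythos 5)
Your proof is correct and is essentially identical to the paper's: the same B\'ezout decomposition $D=kC+pL+B(D)$ with respect to $C$ and $L$, the same two inequalities (equivalent to the paper's $2d\ge 5m-k+2p$ and $d\ge 4m+2k-3p$), and the same $2{:}1$ linear combination giving $5d\ge 14m+p\ge 14m$. The one slip is in the step you yourself flagged as delicate: the distinctness of the four points on $L$ together with $|L\cap C|\le 2$ does \emph{not} rule out that two of $P_6,\dots,P_9$ lie on $L\cap C$; what actually rules this out is the case classification itself, since case (4) means exactly five points of $\mathbb{X}$ lie on $C$ (a sixth point on $C$ would place the configuration in case (3) or earlier), while the paper records only the complementary fact that none of $P_1,\dots,P_5$ lies on $L$. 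Your closing parenthetical is also valid: the $3{:}2$ weighting yields $8d\ge 23m+k$, hence the sharper bound $d\ge\tfrac{23}{8}m$, which the paper's own method supports but does not state.
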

\begin{proof} 
Let $D$ be a divisor of degree $d$ passing through $m\mathbb{X}$
and let  
$$
D= kC+pL + B(D)
$$
be the B\'{e}zout decomposition of $D$. We have 
	$$
	\begin{aligned}
		B(D)\cdot C&\ge 5(m-k) \quad &\Leftrightarrow \quad & 2d\ge 5m-k+2p \label{eq41}, \\
		B(D)\cdot L& \ge 4(m-p) \quad &\Leftrightarrow \quad & d\ge 4m+2k-3p \label{eq42}.
	\end{aligned}
	$$
Multiplying the first inequation with 2 and then adding it to the second
yields $5d\ge 14m$, or equivalently $d\ge\frac{14}{5}m$.  
Hence we get $\hat{\alpha}_{\mathbb{X}}\ge \frac{14}{5}.$
\end{proof}

As a direct consequence of the above propositions, 
we have the following corollary.

\begin{corollary}
Let $\mathbb{X}=\{P_1,\dots,P_9\}$ be a set of nine points in $\mathbb{P}^2$
that are not contained in the union of three lines.
Then $\hat{\alpha}_{\mathbb{X}}=\frac{5}{2}$ if and only if
there are eight points of $\mathbb{X}$ lying on an irreducible conic and
the last point is exactly the intersection 
of four concurrent lines,
each passing through a pair of the eight points.
\end{corollary}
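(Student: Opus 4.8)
The plan is to treat both implications by organizing the discussion according to the number of the nine points lying on an irreducible conic, and then assembling the results already established for cases (1)--(4) together with Corollary~\ref{Sec3-Coro}.

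For the \emph{if} direction the argument is short. Suppose eight of the points lie on an irreducible conic $C$ and the remaining point $Q$ is the common intersection of four concurrent lines, each joining a pair of the eight. First I would observe that $Q\notin C$: if $Q$ lay on $C$ and on a secant $P_iP_j$, then that line would meet $C$ in the three distinct points $P_i,P_j,Q$, impossible for an irreducible conic. Hence $\mathbb{X}$ falls under case~(1) with $n=9$, and Corollary~\ref{Sec3-Coro}(b)(ii) immediately gives $\hat{\alpha}_{\mathbb{X}}=\tfrac{5}{2}$. (Such an $\mathbb{X}$ is moreover automatically not contained in three lines, since three lines meet $C$ in at most six points, so the standing hypothesis of the corollary is consistent with this configuration.)

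For the \emph{only if} direction, assume $\hat{\alpha}_{\mathbb{X}}=\tfrac{5}{2}$ and that $\mathbb{X}$ is not contained in three lines. Since $\tfrac{5}{2}<3$, the set $\mathbb{X}$ is not contained in an irreducible cubic (otherwise $\hat{\alpha}_{\mathbb{X}}=3$ by the first proposition of this section), and by the structural dichotomy recalled at the beginning of the section it must lie on the union of an irreducible conic $C$ and a line $L$. Writing $s:=|\mathbb{X}\cap C|$, I would next pin down the range of $s$. If $s=9$, the divisor $mC$ has degree $2m$ and multiplicity $m$ at every point, forcing $\hat{\alpha}_{\mathbb{X}}\le 2<\tfrac{5}{2}$, a contradiction, so $s\le 8$. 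The crucial step is the lower bound $s\ge 5$: if $s\le 4$, then at least five points lie on $L$ while the remaining at most four points can be covered by at most two further lines, so $\mathbb{X}$ would be contained in three lines, contrary to hypothesis. Hence $5\le s\le 8$, which are exactly the configurations (4), (3), (2), (1).

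It then remains only to eliminate cases (4), (3) and (2). The corresponding propositions give $\hat{\alpha}_{\mathbb{X}}\ge\tfrac{14}{5}$, $\hat{\alpha}_{\mathbb{X}}\ge\tfrac{58}{23}$ (the smallest of the bounds $3$, $\tfrac{58}{23}$, $\tfrac{53}{21}$ arising in the three subcases of (3)), and $\hat{\alpha}_{\mathbb{X}}\ge\tfrac{18}{7}$ respectively, each strictly larger than $\tfrac{5}{2}$; none is compatible with $\hat{\alpha}_{\mathbb{X}}=\tfrac{5}{2}$. Thus $s=8$ and $\mathbb{X}$ is of type (1), and applying Corollary~\ref{Sec3-Coro}(b)(ii) once more, the equality $\hat{\alpha}_{\mathbb{X}}=\tfrac{5}{2}$ forces the ninth point to be the intersection of four concurrent lines of the form $P_iP_j$, which is precisely the asserted configuration. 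I expect the only genuinely new ingredient to be the reduction $s\ge 5$ (ruling out five or more collinear points via the three-lines obstruction); the rest is bookkeeping that stitches together the four case propositions and Corollary~\ref{Sec3-Coro}.
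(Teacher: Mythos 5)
Your proof is correct and follows essentially the same route the paper intends, since the corollary is stated there as a direct consequence of the case propositions (1)--(4) together with Corollary~\ref{Sec3-Coro}; your assembly of the strict lower bounds $\tfrac{14}{5}$, $\tfrac{58}{23}$, $\tfrac{18}{7}$ to eliminate cases (4), (3), (2) and the appeal to Corollary~\ref{Sec3-Coro}(b)(ii) in case (1) is exactly the intended argument. The only additions are the details the paper leaves implicit --- that $Q\notin C$, that $s=9$ forces $\hat{\alpha}_{\mathbb{X}}\le 2$, and the reduction $5\le s\le 8$ via the three-lines obstruction --- and all of these are argued correctly.
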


%
%

\goodbreak

\address
\end{document}